\documentclass[12]{article}
\pagestyle{empty}
\usepackage[english]{babel}
\usepackage{amsmath}
\usepackage{amsthm}
\usepackage{amssymb}
\usepackage{amscd}
\usepackage[latin1]{inputenc}
\usepackage[all]{xy}
\usepackage{color}
\newtheorem{theorem}{Theorem}[section]
\newtheorem{definition}[theorem]{Definition}
\newtheorem{lemma}[theorem]{Lemma}
\newtheorem{proposition}[theorem]{Proposition}

\newtheorem{corollary}[theorem]{Corollary}
\newtheorem{remark}[theorem]{Remark}

\newtheorem{example}[theorem]{Example}
\newtheorem{examples}[theorem]{Examples}
\newtheorem{question}[theorem]{Question}

\newcommand{\A}{\mathcal{A}}
\newcommand{\T}{\mathcal{T}}
\newcommand{\mono}{\hookrightarrow}
\newcommand{\epi}{\twoheadrightarrow}
\newcommand{\F}{\mathcal{F}}

\newcommand{\G}{\mathcal{G}}
\newcommand{\Ha}{\mathcal{H}}
\newcommand{\D}{\mathcal{D}}
\newcommand{\W}{\mathcal{W}}

\newcommand{\U}{\mathcal{U}}

\newcommand{\Ht}{\mathcal{H}_{\mathbf{t}}}
\newcommand{\Gen}{\text{Gen}}
\newcommand{\Cogen}{\text{Cogen}}
\newcommand{\Pres}{\text{Pres}}
\newcommand{\Copres}{\text{Copres}}
\newcommand{\Hom}{\text{Hom}}
\newcommand{\Ext}{\text{Ext}}
\newcommand{\Ker}{\text{Ker}}
\newcommand{\Coker}{\text{Coker}}
\newcommand{\Add}{\text{Add}}
\newcommand{\Produc}{\text{Prod}}
\newcommand{\add}{\text{add}}

\newcommand{\keywords}[1]{\noindent\textbf{Key words and phrases: } #1}
\newcommand{\MSC}[1]{\noindent\textbf{2010 Mathematics Subject Classification. } #1}

\title{The HRS tilting process and Grothendieck hearts of t-structures}

\author{Carlos E. Parra \footnote{The first named author was supported by CONICYT/FONDECYT/Iniciaci\'on/11160078} \and Manuel Saor\'{\i}n \footnote{The second named author was supported by the research projects from Spanish Ministerio de Econom\'{\i}a y Competitividad (MTM2016-77445-P) and from the Fundaci\'on `S\'eneca' of
Murcia (19880/GERM/15), with a part of FEDER funds.}\footnote{Both authors warmly thank Simone Virili for his comments, suggestions and help during the preparation of the paper.}}

\begin{document}

\maketitle

\begin{abstract}

In this paper we revisit the problem of determining when the heart of a t-structure is a Grothendieck category, with special attention to the case of the Happel-Reiten-Smal\o \ (HSR) t-structure in the derived category of a Grothendieck category associated to a torsion pair in the latter. We revisit the HRS tilting process deriving from it a lot of information on the HRS t-structures which have a projective generator or an injective cogenerator, and obtain several bijections between classes of pairs $(\A,\mathbf{t})$ consisting of an abelian category and a torsion pair in it. We use these bijections to re-prove, by different methods, a recent result of Tilting Theory and the fact that if $\mathbf{t}=(\T,\F)$ is a torsion pair in a Grothendieck category $\G$, then the heart of the associated HRS t-structure is itself a Grothendieck category if, and only if,  $\mathbf{t}$ is of finite type. We survey this last problem and recent results after its solution. 

\end{abstract}

\keywords derived category, Grothendieck category, Happel-Reiten-Smal\o \ t-structure, heart of a t-structure, torsion pair, t-structure. \\
\MSC  18E15, 18E30, 18E40, 16B50, 16E30, 16E35.

\section{Introduction}

The aim of this paper is twofold. On one side we want to give a summary of the main results related with the following question:

\begin{question}\label{Question Groth.}
When is the heart of a t-structure a Grothendieck category?
\end{question}

We shall mainly concentrate in the route leading to the answer to the question in the case when the ambient triangulated category is the (unbounded) derived category $\D(\G)$ of a Grothendieck category $\G$ and the t-structure is the Happel-Reiten-Smal\o \ (HRS) t-structure in $\D(\G)$ associated to a torsion pair in $\G$ (see Example \ref{examples-t-structures}(2)). But we include a final short section, where we briefly summarize the main results for general triangulated categories with coproducts and arbitrary t-structures. As a second goal, we want to revisit the HRS tilting process and show that it allows to prove in an easy way parts of recent results in the literature, and that, with the help of a recent approach to the problem using purity, one can re-prove the answer to Question \ref{ques.HRS-tstructure} below by methods completely different to those used to get the earlier answer.

All throughout the paper, unless otherwise stated, all categories will be additive. We will  mainly use two types of  
 categories commonly studied in \emph{Homological Algebra}, concretely  \emph{abelian categories} and $\emph{triangulated categories}$ (we refer to  \cite{S} and \cite{N}  for the respective definitions). The key concept for us is that of a \emph{t-structure} in a triangulated category, introduced by Beilinson, Bernstein and Deligne \cite{BBD} in their treatment of perverse sheaves. Roughly speaking a t-structure in the triangulated category $\D$ is a pair $\tau =(\mathcal{U},\mathcal{W})$ of  full subcategories satisfying some axioms (see Definition \ref{Def.t-structure} for the details) which guarantee that the intersection $\mathcal{H}_\tau=\mathcal{U}\cap\mathcal{W}$ is an abelian category, commonly called the \emph{heart} of the t-structure.   This abelian category  comes with a cohomological functor $H_\tau^0:\D\longrightarrow\mathcal{H}_\tau$. In \cite{BBD} the category of perverses sheaves on a variety $\mathbb{X}$ appeared as the heart of a t-structure in $\D^b(\mathbb{X})$, the bounded derived category of coherent sheaves on $\mathbb{X}$. 

In several modern developments of Mathematics, as  Motive Theory, the homological approach to Mirror Symmetry, Modular Representation of finite groups, Representation Theory of Algebras, among others, the role of t-structures is fundamental.  For this reason it is important to know when the heart of a t-structure has nice properties as an abelian category. Vaguely speaking, one would ask:  When is the heart of a given t-structure a nice category?. Trying to make sense of the adjective `nice' here, one commonly uses the following ``hierarchy" among abelian categories introduced by Grothendieck \cite{G}. We say that an abelian category $\A$ is:

\begin{enumerate}
\item AB3 (resp. AB3*) when it has (arbitrary set-indexed) coproducts (resp. products);

\item AB4 (resp. AB4*) when it is AB3 (resp. AB3*) and the coproduct functor $\coprod: [\Lambda,\A] \rightarrow \A$ (resp. product functor $\prod: [\Lambda,\A] \rightarrow \A$) is exact, for each set $\Lambda$; 
\item  AB5 (resp. AB5*) when it is AB3 (resp. AB3*) and the direct limit functor $\varinjlim: [\Lambda, \A] \rightarrow \A$ (resp. inverse limit functor $\varprojlim:[\Lambda^{\text{op}},\A] \rightarrow \A$) is exact, for each directed set $\Lambda.$
\item a Grothendieck category, when it is AB5 and has a generator or, equivalently, a set of generators. 
\end{enumerate}

 Grothendieck categories appear quite naturally in Algebra and Geometry and their behavior is, in many aspects, similar to that of module categories over a ring (see \cite[Chapter V]{S}). For instance, such a category has enough injectives and every object in it has an injective envelope. Even more, by a famous theorem of Gabriel and Popescu (see \cite{GP}, and also \cite[Theorem X.4.1]{S}), such a category is always a Gabriel localisation of a module category, which roughly means that it is obtained from such a category by formally inverting some morphisms.  This is the main reason why the study of when the heart of a t-structure is a Grothendieck category, i.e. Question \ref{Question Groth.},  has deserved most of the attention, apart of the study of  when it is a module category, that we barely touch in this paper. When one starts approaching the question, one quickly sees that it is hopeless unless some extra hypotheses are imposed on the ambient triangulated category $\D$ and/or on the t-structure $\tau$ itself. For instance, it is unavoidable to require that $\D$ has coproducts or, at least, to guarantee that  coproducts in $\D$ of objects in the heart of $\tau$ always exist. On the other hand, the problem gets quite complicated if the coproduct in $\mathcal{H}_\tau$ and the coproduct in $\D$  of a given family of objects in $\mathcal{H}_\tau$ do not coincide. A way of ensuring that they coincide is to require that the t-structure be \emph{smashing}, i.e. that the co-aisle $\mathcal{W}$ of the t-structure is closed under coproducts in $\D$. Therefore, instead of the initial question, the following one has more hopes of being answered and has deserved a lot of attention in recent times (see Section \ref{sec.beyondHRS}):
 
 \begin{question} \label{ques.smashing-t-structure}
 Let $\D$ be a triangulated category with coproducts and let $\tau =(\mathcal{U},\mathcal{W})$ be a smashing t-structrure in $\D$. When is the heart of $\tau$ a Grothendieck category?
 \end{question}

 Although studied historically first, the question for the HRS t-structure is a particular case of this last question. Namely, if $\G$ is a Grothendieck category, then its derived category $\D(\G)$ is the prototypical example of a triangulated category with coproducts (and also products). When a torsion pair $\mathbf{t}=(\T,\F)$ is given in $\G$, the associated HRS t-structure in $\D(\G)$ is smashing. So restricted to this particular example, the last question is re-read as follows, and it is the main problem that we survey and  re-visit in this paper:
 
 \begin{question} \label{ques.HRS-tstructure}
 Let $\G$ be a Grothendieck category, let $\mathbf{t}=(\T,\F)$ be a torsion pair in $\G$ and let $\Ht$ be the heart of the associated HRS t-structure in $\D(\G)$. When is $\Ht$ a Grothendieck category?
 \end{question}

 Let's now have a look at the new results and/or proofs of the paper. On what concerns our new look at the HRS tilting process, we give a series of results leading to a list of bijections between pairs consisting of a class of abelian categories and a class of torsion pairs in them (see Corollaries \ref{cor.bijections-from-HRStilt} and \ref{cor.AB5-via-pureinjectivity} for the complete list). We just point out in this introduction two of those results (see Theorem \ref{thm.tilting theorem} and Corollary \ref{cor.module-category}). The first one identifies the torsion pairs with cogenerating torsion class for which the heart is AB3 and has a projective generator (see also Theorem \ref{thm.cotilting theorem} for its dual).
 
 \begin{theorem} \label{thm.tilting theorem-intro}
Let $\A$ be an abelian category and let $\mathbf{t}=(\mathcal{T},\mathcal{F})$ be a torsion pair in $\A$. The following assertions are equivalent:
\begin{enumerate}
\item $\mathbf{t}$ is a tilting torsion pair.
\item $\mathbf{t}$ is a co-faithful torsion pair whose heart $\Ht$ is an AB3 abelian category with a projective generator. 
\end{enumerate} 
In this case,  $V$ is  1-tilting object such that $\T=\Gen(V)$ if,  and only if,  $V[0]$ is a projective generator of $\Ht$. Moreover, an object $P$ of $\Ht$ 
is a projective generator of this latter category if, and only if, it is isomorphic to $V[0]$ for some 1-tilting object $V$ of $\A$ such that $\T=\Gen (V)$. 
\end{theorem}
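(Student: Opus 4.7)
The plan is to prove the two directions separately and then obtain the final correspondence as a by-product of the arguments, using the canonical short exact sequence $0\to H^{-1}(M)[1]\to M\to H^0(M)[0]\to 0$ in $\Ht$ associated to each $M\in\Ht$ and the adjunction-like formula $\Hom_\Ht(M,T[0])\cong\Hom_\A(H^0(M),T)$ for $T\in\T$ (obtained from the triangle for $M$ since $\Hom_{\D(\A)}(F[1],T)=\Hom_\A(F,T[-1])=0$).

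For the implication (1)$\Rightarrow$(2), I start from a $1$-tilting object $V$ with $\T=\Gen(V)$. Co-faithfulness of $\mathbf{t}$ follows from the standard property that $\Gen(V)$ is cogenerating in $\A$ for $V$ $1$-tilting. Since $V\in\T$, one has $V[0]\in\Ht$, and to see that $V[0]$ is projective in $\Ht$ I compute $\Ext^1_\Ht(V[0],M)=\Hom_{\D(\A)}(V[0],M[1])$ via the canonical triangle for $M$: the resulting long exact sequence involves $\Ext^1_\A(V,H^0 M)$, which vanishes because $H^0 M\in\T=\Ker\Ext^1_\A(V,-)$, and $\Ext^2_\A(V,H^{-1}M)$, which vanishes because $V$ has projective dimension $\leq 1$. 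For generation, I cover $H^0 M\in\T=\Gen(V)$ by an epimorphism $V^{(\Lambda)}\epi H^0 M$ in $\A$, lift it through $M\epi H^0(M)[0]$ using the projectivity just established, and reduce the problem to covering objects of the form $F[1]$ with $F\in\F$; the cofaithfulness supplies an embedding $F\mono T$ with $T\in\T$ and hence a SES $0\to F[1]\to T[0]\to (T/F)[0]\to 0$ in $\Ht$, from which covering by $V[0]^{(\Lambda)}$'s follows. The coproducts needed are formed in $\Ht$, which is AB3.

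For the implication (2)$\Rightarrow$(1), I let $P$ be a projective generator of $\Ht$ and first show $H^{-1}(P)=0$, so that $P=V[0]$ with $V:=H^0(P)\in\T$. To see this, write $G:=H^{-1}(P)$ and, assuming $G\neq 0$, use co-faithfulness to embed $G\mono T$ with $T\in\T$ and combine the resulting SES $0\to G[1]\to T[0]\to (T/G)[0]\to 0$ in $\Ht$ with the canonical SES $0\to G[1]\to P\to H^0(P)[0]\to 0$ and the projectivity of $P$ to derive a contradiction; this \emph{is} the main obstacle, and it ultimately boils down to showing that $\Hom_\Ht(P,-)$ applied to the first SES forces $\Hom_\Ht(P,G[1])$ to admit a certain splitting that is incompatible with the factorization of the identity on $G[1]$ through $P$. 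With $P=V[0]$ in hand, the adjunction formula yields $\Hom_\A(V,T)=\Hom_\Ht(P,T[0])$, so $V$ generates $\T$ (since $P$ generates $\Ht$) and $\T\subseteq\Gen(V)$; the opposite inclusion follows from $V\in\T$ together with closure of $\T$ under quotients. Computing $\Ext^1_\Ht(P,T[0])=\Hom_{\D(\A)}(V[0],T[1])$ via the triangle for $V[0]$ gives $\Ext^1_\A(V,T)=0$ for all $T\in\T$, and a similar computation of $\Ext^1_\Ht(P,F[1])$ combined with the first step yields $\Ext^2_\A(V,-)=0$, i.e.\ projective dimension of $V$ is at most one. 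The set-indexed coproducts $V^{(\Lambda)}$ exist because $\Ht$ is AB3 and $H^0$, being left adjoint to $(-)[0]\colon\T\to\Ht$, preserves them.

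Finally, the last statement of the theorem is a direct by-product of the two constructions: the forward direction shows every $V[0]$ with $V$ as described is a projective generator, while the reverse direction shows every projective generator $P$ has $H^{-1}(P)=0$ and $H^0(P)$ is such a $V$, so these mutually inverse assignments $V\leftrightarrow V[0]$ identify the two classes.
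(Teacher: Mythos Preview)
Your proposal has several genuine gaps.

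First, the short exact sequence you write twice in $\Ht$ is in the wrong direction. From an exact sequence $0\to F\to T\to T/F\to 0$ in $\A$ with $F\in\F$ and $T\in\T$, the induced triangle $T[0]\to (T/F)[0]\to F[1]\to T[1]$ yields the short exact sequence $0\to T[0]\to (T/F)[0]\to F[1]\to 0$ in $\Ht$, not $0\to F[1]\to T[0]\to (T/F)[0]\to 0$. Indeed, $\F[1]$ is the torsion class and $\T[0]$ the torsionfree class of $\bar{\mathbf{t}}$, so $\Hom_{\Ht}(F[1],T[0])=0$ always; your sequence would force $F=0$.

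Second, in $(1)\Rightarrow(2)$ you assert that ``the coproducts needed are formed in $\Ht$, which is AB3'', but the AB3 condition is part of what must be proved. The paper handles this by first showing, in a separate lemma, that the stalk complex $V^{(I)}[0]$ is the coproduct of $I$ copies of $V[0]$ in $\Ht$ for each set $I$; this requires a genuine computation, using the cogenerating property of $\T$ to establish $\Ext^1_\A(V^{(I)},F)\cong\Ext^1_\A(V,F)^I$. Once this is known, $\Ht=\Pres_{\Ht}(V[0])$ and AB3 follows from the general fact that coproducts of objects in $\Pres(X)$ exist whenever coproducts of copies of $X$ do.

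Third, and most seriously, your argument in $(2)\Rightarrow(1)$ that $H^{-1}(P)=0$ is not a proof. With the corrected short exact sequence it is unclear how to proceed along your lines: to split the canonical sequence $0\to G[1]\to P\to V[0]\to 0$ you would need $\Ext^1_{\Ht}(V[0],G[1])\cong\Ext^2_\A(V,G)=0$, but at this stage you do not yet know $V$ has projective dimension $\leq 1$. The paper avoids this obstacle entirely. It first shows, \emph{without} assuming $H^{-1}(P)=0$, that $V:=H^0(P)$ is quasi-tilting with $\T=\Gen(V)$: one uses that $H^0\colon\Ht\to\A$ is right exact and preserves coproducts to get $\T=\Pres(V)$, then the sequence $0\to H^{-1}(P)[1]\to P\to V[0]\to 0$ together with $\Ext^1_{\Ht}(P,-)=0$ and $\Hom_{\Ht}(H^{-1}(P)[1],T[0])=0$ to get $\T\subseteq\Ker(\Ext^1_\A(V,-))$, and finally a pullback argument for the inclusion $\overline{\Gen}(V)\cap\Ker(\Ext^1_\A(V,-))\subseteq\T$ (which you also omit). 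Co-faithfulness then gives $\overline{\Gen}(V)=\A$, so $V$ is $1$-tilting. Only \emph{after} this, applying the already-proved $(1)\Rightarrow(2)$, does one know $V[0]$ is a projective generator; then $P$ is a summand of some coproduct of copies of $V[0]$, which is $V^{(I)}[0]$ by the lemma above, and $H^{-1}(P)=0$ follows.
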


The next one characterizes when a co-faithful torsion pair has a heart which is a module category (see  Corollary \ref{cor.module-category}).

\begin{corollary} \label{cor.module-category-intro}
Let $\A$ be an abelian category and let $\mathbf{t}=(\T,\F)$ be a torsion pair in $\A$. The following assertions are equivalent:

\begin{enumerate}
\item There is a classical 1-tilting set $\T_0$ (resp. a classical 1-tilting object $V$) such that $\T=\Gen(\T_0)$ (resp. $\T=\Gen(V)$).
\item $\mathbf{t}$ is a co-faithful torsion pair whose heart $\Ht$ is equivalent to the module category over a small pre-additive category (resp. over a ring). 
\end{enumerate}
\end{corollary}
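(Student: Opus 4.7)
The plan is to combine Theorem \ref{thm.tilting theorem-intro} with the Gabriel--Mitchell characterisation of module categories. Recall that an AB3 abelian category is equivalent to a category $\text{Mod-}R$ for some ring $R$ if, and only if, it admits a compact projective generator $P$, in which case one can take $R\cong\End(P)^{\text{op}}$; the analogous statement for a small pre-additive category $\C$ is obtained by allowing a generating \emph{set} of compact projectives. Now Theorem \ref{thm.tilting theorem-intro} already yields a bijection between 1-tilting objects $V$ of $\A$ with $\T=\Gen(V)$ and projective generators of $\Ht$, sending $V$ to $V[0]$, provided $\mathbf{t}$ is co-faithful and $\Ht$ is AB3. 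So the corollary reduces to showing that, under this bijection, \emph{classical} 1-tilting objects (resp. sets) correspond to \emph{compact} projective generators (resp. generating sets of compact projectives).

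The core of the argument is this compactness translation. The embedding $\T\hookrightarrow\Ht$, $T\mapsto T[0]$, preserves coproducts: coproducts of families in $\T$ taken in $\Ht$ agree with those in $\A$, once $\Ht$ is known to be AB3 and $\mathbf{t}$ to be tilting. Thus, for a 1-tilting object $V$ with $\T=\Gen(V)$, one obtains $\Hom_{\Ht}(V[0],T[0])\cong\Hom_{\A}(V,T)$ naturally in $T\in\T$, and a short diagram chase with the truncation triangles of the HRS t-structure gives $\Hom_{\Ht}(V[0],F[1])\cong\Ext^1_{\A}(V,F)$ naturally in $F\in\F$. Every object of $\Ht$ fits into a short exact sequence in $\Ht$ whose outer terms have the form $T[0]$ and $F[1]$, so splicing these two identifications yields: $V[0]$ is compact in $\Ht$ if, and only if, $\Hom_{\A}(V,-)$ and $\Ext^1_{\A}(V,-)$ preserve coproducts of families in $\T$ and $\F$ respectively --- which is precisely the classicality condition on the 1-tilting object $V$ adopted earlier in the paper.

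Granted the translation, (1) $\Leftrightarrow$ (2) follows at once: Theorem \ref{thm.tilting theorem-intro} supplies the bijection and the co-faithful/AB3 data, the translation identifies classicality with compactness, and Gabriel--Mitchell converts compact projective generators into a module structure on $\Ht$ (the ``set'' version is obtained by applying the same argument member-wise to a generating family). The main obstacle I anticipate is the bookkeeping in the compactness translation --- verifying that coproducts in $\Ht$ of families in $\T$ agree with those in $\A$, and that the long exact sequences attached to the HRS truncations interact well enough with coproducts to make the $\Ext^1$ half of the translation go through. Once these are in hand, the proof is little more than assembling Theorem \ref{thm.tilting theorem-intro} with Gabriel--Mitchell.
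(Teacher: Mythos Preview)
Your overall strategy---combine Theorem~\ref{thm.tilting theorem-intro} with Gabriel--Mitchell---is exactly the paper's. The gap lies in your ``compactness translation''.

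First, the condition you extract is not the paper's definition of \emph{classical}. In Definition~\ref{def.tilting-set-selfsmall} a classical 1-tilting object is simply a 1-tilting object that is \emph{self-small}: $\A(V,V^{(I)})\cong\A(V,V)^{(I)}$ for all sets $I$. There is no $\Ext^1$ clause and no requirement that $\A(V,-)$ commute with arbitrary coproducts in~$\T$.

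Second, the $\Ext^1$ half of your translation does not go through. In the torsion pair $\bar{\mathbf{t}}=(\F[1],\T[0])$ on $\Ht$ the class $\F[1]$ is the \emph{torsion} class and $\T[0]$ the torsionfree one; the torsion radical $M\mapsto H^{-1}(M)[1]$ is a right adjoint and need not preserve coproducts. So for a family $(M_i)$ in $\Ht$ you do get $H^0(\coprod M_i)\cong\coprod H^0(M_i)$, but you have no control over $H^{-1}(\coprod M_i)$, and in particular $\coprod^{\Ht}(F_i[1])$ is generally not $(\coprod^{\A}F_i)[1]$ (indeed $\coprod^{\A}F_i$ need not even lie in~$\F$). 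Your splicing argument therefore cannot reduce compactness of $V[0]$ to a statement about $\Ext^1_\A(V,-)$ on~$\F$.

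The paper sidesteps this entirely. It never checks full compactness; it only verifies that $\T_0[0]$ is a \emph{self-small} set of projective generators of $\Ht$. For that one needs just the $\Hom$ identification, and only for coproducts of objects coming from $\T_0$ itself---precisely the situation handled by (the easy adaptation of) Lemma~\ref{lem.auxiliar}, which shows $\coprod^{\Ht}(T_\lambda[0])\cong(\coprod^{\A}T_\lambda)[0]$ for families $(T_\lambda)$ in~$\T_0$. Self-smallness then transfers verbatim from $\T_0$ in $\A$ to $\T_0[0]$ in $\Ht$. Finally, Proposition~\ref{prop.Gabriel-Mitchell} upgrades a self-small set of projective generators to a set of compact projective generators, and Gabriel--Mitchell finishes. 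The converse direction is similarly organized around self-smallness rather than compactness. Replacing your direct compactness check by this self-smallness argument closes the gap.
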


One of the consequences of our new visit to the HRS tilting process is a new easy proof of the $n=1$ case of Positselsky-Stovicek tilting-cotilting correspondence (see Corollary \ref{cor.Po-St-tilting-cotilting-corresp}):

\begin{corollary}[Positselski-Stovicek] \label{cor.Po-St-tilting-cotilting-corresp-intro}
The HRS tilting process gives a one-to-one correspondence between:
\begin{enumerate}
\item The pairs $(\A ,\mathbf{t})$ consisting of an AB3*  abelian category $\A$ with an injective cogenerator and a tilting torsion pair $\mathbf{t}$ in $\A$;
\item The pairs $(\mathcal{B},\bar{\mathbf{t}})$ consisting of an AB3  abelian category $\mathcal{B}$ with a projective generator and a cotilting torsion pair $\bar{\mathbf{t}}$ in $\mathcal{B}$.
\end{enumerate}
\end{corollary}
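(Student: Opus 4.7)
The plan is to construct mutually inverse maps between the two classes by applying the HRS tilting process in both directions, verifying the required properties with the help of Theorem \ref{thm.tilting theorem-intro} and its cotilting dual Theorem \ref{thm.cotilting theorem}. For the forward direction, given $(\A,\mathbf{t})$ as in (1), I set $\mathcal{B}:=\Ht$ and equip it with the torsion pair $\bar{\mathbf{t}}=(\F[1],\T[0])$ induced on the heart by the HRS tilting process. Since $\mathbf{t}$ is tilting, Theorem \ref{thm.tilting theorem-intro} immediately gives that $\mathcal{B}$ is AB3 with a projective generator $V[0]$, where $V$ is a 1-tilting object satisfying $\T=\Gen(V)$. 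To check that $\bar{\mathbf{t}}$ is cotilting, I would appeal to the dual statement Theorem \ref{thm.cotilting theorem}: it suffices to verify that $\bar{\mathbf{t}}$ is faithful in $\mathcal{B}$ and that its HRS heart is AB3* with an injective cogenerator. Faithfulness is immediate, since the projective generator $V[0]$ sits inside $\T[0]$, the torsion-free part of $\bar{\mathbf{t}}$. The condition on the heart is then deduced from the involutive nature of HRS tilting, which canonically identifies (up to shift) the HRS heart of $(\mathcal{B},\bar{\mathbf{t}})$ with the original category $\A$, transporting the AB3* property and the injective cogenerator back.

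The backward map is defined dually: starting from $(\mathcal{B},\bar{\mathbf{t}})$ as in (2), one forms the HRS heart $\mathcal{H}_{\bar{\mathbf{t}}}$ in $\D(\mathcal{B})$ together with the naturally induced torsion pair. By Theorem \ref{thm.cotilting theorem}, this heart is AB3* with an injective cogenerator; by the dual reasoning to the one above (with the roles of projective generator and injective cogenerator, and of tilting and cotilting, swapped), the induced torsion pair is tilting. The fact that the two maps are mutually inverse again reduces to the involution property of HRS tilting.

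The central obstacle---and the step that carries the real content---is precisely this involutive behavior of the HRS tilting process at the present level of generality. One must establish that iterating HRS tilting returns, via a canonical equivalence, the original abelian category together with the original torsion pair (up to shift). For module categories this is classical, but in the AB3/AB3* framework it requires a delicate comparison of the unbounded derived categories $\D(\A)$ and $\D(\Ht)$ and of their respective HRS t-structures. Granting this, the remaining verifications of faithfulness, co-faithfulness, and preservation of (co)generators all follow formally from Theorem \ref{thm.tilting theorem-intro} and Theorem \ref{thm.cotilting theorem}, applied in a symmetric fashion.
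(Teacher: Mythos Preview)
Your approach is essentially the same as the paper's: both reduce the correspondence to the combination of Theorem \ref{thm.tilting theorem}, its dual Theorem \ref{thm.cotilting theorem}, and the involutive behavior of the HRS tilting process. The paper just packages the argument differently, by first recording the two bijections $(\mathbf{AB},\mathbf{tor}_{tilt})\cong(\mathbf{AB3}_{proj},\mathbf{tor}_{faithful})$ and $(\mathbf{AB3}^*_{inj},\mathbf{tor}_{cofaithful})\cong(\mathbf{AB},\mathbf{tor}_{cotilt})$ in Corollary \ref{cor.bijections-from-HRStilt}, and then intersecting domains and codomains.

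The one place where you express genuine uncertainty---the involution $\Phi^2[(\A,\mathbf{t})]\cong(\A,\mathbf{t})$---is not the obstacle you fear. No comparison of \emph{unbounded} derived categories is needed. The relevant fact is Proposition \ref{prop. Equiva A[1]Ht} (i.e.\ \cite[Proposition 3.4]{HRS}), which works entirely at the level of $\D^b$ via the realization functor and requires only that $\T$ be cogenerating together with either (i) $\A$ has enough injectives, or (ii) $\Ht$ has enough projectives. In your forward direction both hold: $\A$ is AB3* with an injective cogenerator, and $\Ht$ is AB3 with the projective generator $V[0]$ by Theorem \ref{thm.tilting theorem}. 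In the backward direction you start from $\mathcal{B}$ AB3 with a projective generator and a cotilting (hence faithful) torsion pair, so the dual of Proposition \ref{prop. Equiva A[1]Ht} applies. Once you invoke this, the ``delicate comparison'' you flag disappears and your outline becomes a complete proof, matching the paper's.
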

 
 On what concerns Question \ref{ques.HRS-tstructure},  we will re-prove,  by completely different methods (see Theorem \ref{thm.Parra-Saorin-Theorem}),  the 'only if' implication, which is the hardest one,  of the following earlier result:
 
 \begin{theorem} (\cite[Theorem 1.2]{PS2}]) \label{thm.PS}
  Let $\G$ be a Grothendieck category, let $\mathbf{t}=(\T,\F)$ be a torsion pair in $\G$ and let $\Ht$ be the heart of the associated HRS t-structure in $\D(\G)$. Then $\Ht$ is a Grothendieck category if, and only if, the torsionfree class $\mathcal{F}$ is closed under taking direct limits in $\mathcal{G}$. 
 \end{theorem}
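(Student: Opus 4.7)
The plan is to tackle the two implications separately, with the bulk of the work going into the 'only if' direction, which is where the new purity-based argument advertised in the introduction comes in.

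For the 'if' direction I would argue directly. Assuming $\F$ is closed under direct limits in $\G$, every object $X\in\Ht$ sits in a short exact sequence $0\to H^{-1}(X)[1]\to X\to H^{0}(X)[0]\to 0$ in $\Ht$, so a directed system $(X_i)$ in $\Ht$ can be analyzed via the two componentwise directed systems $(H^{-1}(X_i))\subset\F$ and $(H^0(X_i))\subset\T$. Using that $\T$ is always closed under direct limits (as a torsion class) and the hypothesis on $\F$, one verifies that the colimit in $\Ht$ is computed levelwise and that the direct limit functor is exact. Combined with the standard fact that $\Ht$ inherits a generator from a generator of $\G$ (a small modification of the one in Example \ref{examples-t-structures}(2)), this yields that $\Ht$ is Grothendieck.

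For the 'only if' direction, I would argue by contradiction: suppose $\Ht$ is Grothendieck but that there is a directed system $(F_i)_{i\in I}$ in $\F$ whose colimit $F=\varinjlim_{\G} F_i$ has nonzero torsion part, $tF\neq 0$. Each $F_i[1]$ lies in $\Ht$, so one can form the directed colimit $L=\varinjlim^{\Ht}_{i\in I} F_i[1]$ in $\Ht$. The canonical triangle $tF[1]\to F[1]\to (F/tF)[1]\to tF[2]$ coming from $0\to tF\to F\to F/tF\to 0$ identifies the $H^{-1}$- and $H^{0}$-parts that $L$ should have: exactness of direct limits in $\Ht$ forces $L$ to fit into a short exact sequence in $\Ht$ relating the $\Ht$-colimit to $(F/tF)[1]$ and $tF[0]$. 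The discrepancy between the $\Ht$-colimit and the naive colimit $F[1]$ in $\D(\G)$ is then measured precisely by $tF$.

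The main obstacle, and the place where the new approach diverges from the original one, is to convert this discrepancy into a contradiction via purity. The idea is to exploit that a Grothendieck category has enough pure-injective objects and that every object admits a pure-injective envelope. Concretely, one would choose a pure-injective envelope $E$ in $\Ht$ of a suitable object built from $tF$ (or, dually, from an injective cogenerator of $\G$), and use the bijections collected in Corollary \ref{cor.AB5-via-pureinjectivity} to transport the AB5 property of $\Ht$ into a statement about how $\Hom_{\Ht}(-,E)$ turns the $\Ht$-colimit of $(F_i[1])$ into an inverse limit. Evaluating this on the canonical map $L\to F[1]$ (or rather on the associated morphism into a suitable truncation) should produce a nonzero obstruction, contradicting $tF\neq 0$. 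The hardest technical step is to identify the exact pure-injective object and the exact pure-exact sequence that force the contradiction cleanly; everything else is bookkeeping with the triangulated structure and the HRS truncation functors.
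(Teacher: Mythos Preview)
Your `only if' argument has a genuine gap, and the missing idea is precisely the one that drives the paper's proof. You try to work directly in $\Ht$ by contradiction, hoping that some pure-injective object in $\Ht$ will detect the discrepancy between $\varinjlim^{\Ht} F_i[1]$ and $(\varinjlim_{\G} F_i)[1]$. But the purity input available here, namely Corollary~\ref{cor.AB5-via-pureinjectivity}, only tells you something when the torsion pair is \emph{cotilting} (equivalently faithful), because it is a statement about the injective cogenerator $Q[1]$ of $\Ht$ being pure-injective. For a general torsion pair $\mathbf{t}$ in $\G$ there is no reason for $\F$ to be generating, so you cannot invoke that corollary, and there is no obvious pure-injective object in $\Ht$ that controls closure of $\F$ under direct limits. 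Your closing sentence (``the hardest technical step is to identify the exact pure-injective object\ldots'') is an honest acknowledgement that this step is not actually there.

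The paper's route is quite different and supplies exactly this missing reduction. It first uses Proposition~\ref{prop.projective-generator-implies-qtilting} to see that the injective cogenerator of the Grothendieck category $\Ht$ forces $\F=\Cogen(Q)=\Copres(Q)$ for a quasi-cotilting object $Q$. The key move is then to pass to the abelian exact subcategory $\underline{\F}=\underline{\Cogen}(Q)\subseteq\G$, which is itself Grothendieck, and to consider the restricted torsion pair $\mathbf{t}'=(\T\cap\underline{\F},\F)$. In $\underline{\F}$ the object $Q$ is a genuine $1$-cotilting object, so $\mathbf{t}'$ is cotilting and Corollary~\ref{cor.AB5-via-pureinjectivity} applies. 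One then shows, via an adjoint pair $(\iota,\rho')$ between the two hearts, that $\mathcal{H}_{\mathbf{t}'}$ inherits AB5 from $\Ht$; hence $Q$ is pure-injective in $\underline{\F}$, and a result of \v{C}oupek--\v{S}\v{t}ov\'{\i}\v{c}ek then gives that $\F$ is closed under direct limits in $\underline{\F}$, equivalently in $\G$. None of this is a contradiction argument, and the pure-injective object in play is the concrete $Q$ coming from the quasi-cotilting structure, not an abstract envelope.

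For the `if' direction your sketch is in the right spirit but too optimistic as written: colimits in $\Ht$ are \emph{not} literally computed levelwise in $\D(\G)$, and one needs the arguments of \cite{PS1,PS2} (preservation of direct limits by $H^0$ and $H^{-1}$ under finite type, plus an explicit set of generators) rather than a direct ``componentwise'' computation.
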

 
 The reader is referred to Section 5 for a summary of recent results concerning Question \ref{ques.smashing-t-structure}, that go beyond the HRS situation.

 The organization of the paper goes as follows. In Section 2 we introduce the main concepts needed for the understanding of the paper,  specially torsion pairs in abelian categories and t-structures in triangulated categories, with a look also at the HRS tilting process. It turns out that if one starts with a pair $(\A,\mathbf{t})$ consisting of an abelian category $\A$ and a torsion pair $\mathbf{t}$, then the new abelian category obtained by tilting $\A$ with respect to $\mathbf{t}$ need not have $Hom$ sets. Corollary \ref{cor.heart-genuine-category} gives the precise conditions to get $Hom$ sets. In Section 3 we study when the heart of (the HRS t-structure associated to) a torsion pair in an abelian category has either a projective generator or an injective cogenerator. This leads naturally to quasi-(co)tilting torsion pairs (see Proposition \ref{prop.projective-generator-implies-qtilting}) in abelian categories. Then expanded versions of Theorem  \ref{thm.tilting theorem-intro} and its dual are proved (see Theorems \ref{thm.tilting theorem} and \ref{thm.cotilting theorem}). As a particular case, we then give an expanded version Corollary \ref{cor.module-category-intro} (see Corollary \ref{cor.module-category}) that characterizes when the heart of a co-faithful torsion pair is a module category.
 We end the section by giving a series of bijections induced by the HRS tilting process (see Proposition \ref{cor.bijections-from-HRStilt}), from which Corollary \ref{cor.Po-St-tilting-cotilting-corresp-intro} is deduced. In Section 4 we study Question \ref{ques.HRS-tstructure}. Subsections 4.1 and 4.2 are dedicated to show the milestones of the route that led to the solution of the problem in \cite{PS2}, i.e. to the proof of Theorem \ref{thm.PS}. Subsection 4.3 briefly summarizes recent results by Bazzoni, Herzog, Prihoda, Saroch and Trlifaj about the same question in the particular case when the torsion pair is tilting. We end the section by re-proving, using a recent characterization of the AB5 condition by Positselski and Stovicek (see \cite{Po-St}), the fact that if the heart of a torsion pair in a Grothendieck category is itself a Grothendieck category, then the torsion pair is of finite type. The final Section 5 shows the most recent results and the present state of Question \ref{ques.smashing-t-structure}.


\vspace{0,3 cm}

\section{Preliminaries}
In the rest of the paper, whenever $\mathcal{C}$ is an additive category 
and $\mathcal{X}$ is any class of objects, we shall denote by 
$\mathcal{X}^\perp$ (resp. $_{}^\perp\mathcal{X}$) the full subcategory 
consisting of the objects $Y$ such that $\mathcal{C}(X,Y)=0$ (resp. 
$\mathcal{C}(Y,X)=0$), for all $X\in\mathcal{X}$. When $\mathcal{X}=\{X\}$ for simplicity we will write $X^{\perp}$ (resp. $_{}^{\perp}X$) instead of $\mathcal{X}^{\perp}$ (resp. $_{}^{\perp}\mathcal{X}$).

Unless explicitly said otherwise, in the rest of the paper the letter $\mathcal{A}$ will denote an abelian category.

\subsection{Torsion pairs}
\begin{definition}\label{def. torsion pair} \normalfont
A \emph{torsion pair} in $\A$ is a pair $\mathbf{t}=(\T,\F)$ of full subcategories satisfying the following two conditions:
\begin{enumerate}
\item [1)] $\A(T,F)=0$, for all $T\in\T$ and $F\in\F$;
\item [2)] For each object $X$ of $\A$, there is an exact sequence
$$\xymatrix{0 \ar[r] & T_X \ar[r] & X \ar[r] & F_X \ar[r] & 0}$$
\end{enumerate}
where $T_X \in \T$ and $F_X\in \F$. 
\end{definition}

A \emph{torsion class} in $\A$ is a class of objects $\T$  that appears as first component of a torsion pair in $\A$. A \emph{torsionfree class} $\F$ is defined dually. Note that in a torsion pair we have $\F=\T^\perp$ 
and $\T= \hspace{0,01 cm} ^\perp\F$. On the other hand, in the sequence above $T_X$ and $F_X$ depend functorially on $X$, so that the assignment $X \leadsto T_X$ (resp. $X \leadsto F_X$) underlies a functor $t:\A \rightarrow \T$ (resp. $(1:t): \A \rightarrow \F$), which is right (resp. left) adjoint of the inclusion functor $\iota_\T:\T \mono \A$ (resp. $\iota_\F:\F \mono \A$). The composition $\iota_\T \circ t: \A \rightarrow \A$ (resp. $\iota_\F \circ (1:t): \A \rightarrow \A$), which we will still denote by $t$ (resp. $(1:t)$),  is called the \emph{torsion radical} (resp. \emph{torsion coradical})  associated to $\mathbf{t}$.   
\vspace{0.3 cm}

In particular situations, torsion and torsionfree classes are identified by the satisfaction of some closure properties. Recall that an abelian category is called \emph{locally small} when the subobjects of any given object form a set.

\begin{proposition}\label{prop. torsion class=ecq}
Let $\A$ be an abelian category  and let $\T\subseteq \A$ (resp. $\F\subseteq \A$) be a full subcategory. Consider the following assertions:
\begin{enumerate}
 \item  $\T$ (resp. $\F$) is a torsion (resp. torsionfree) class; 
  \item$\T$ (resp. $\F$) is closed under taking quotients (=epimorphic images)  (resp. subobjects), extensions and coproducts (resp. products), when these exist in $\A$. 
  \end{enumerate}
  The implication $(1)\Longrightarrow (2)$ holds true. When $\A$ is 
   AB3 (resp. AB3*) and   locally small, also $(2)\Longrightarrow (1)$ holds.  
\end{proposition}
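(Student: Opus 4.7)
The plan is to prove $(1) \Rightarrow (2)$ by a uniform argument using the canonical torsion sequence of each test object, and to prove $(2) \Rightarrow (1)$ by constructing, for each $X\in\A$, the torsion part $t(X)$ as the maximal subobject of $X$ in $\T$ (resp.\ the torsionfree part $F_X$ as a universal quotient of $X$ in $\F$). The torsion and torsionfree cases have to be treated separately, since local smallness (well-poweredness) is not self-dual; this asymmetry is the main obstacle.

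For $(1) \Rightarrow (2)$ on the torsion side, I would fix a torsion pair $\mathbf{t}=(\T,\F)$ with coradical $(1:t)$ and, for the object $X$ to be tested, show that the canonical epimorphism $X \epi (1:t)(X)$ must vanish. In each of the three situations (a quotient of some $T \in \T$, the middle term of an extension of objects of $\T$, a coproduct of a family of objects of $\T$), every structure map into $X$ from an object of $\T$ composes to zero in $(1:t)(X)\in\F$ by the orthogonality $\A(\T,\F)=0$, whence the universal property of the epi, of the extension, or of the coproduct forces the map $X \to (1:t)(X)$ itself to be zero; since this morphism is epi, $(1:t)(X)=0$ and so $X\in\T$. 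The dual argument, using the radical $t$ and the canonical mono $t(X)\mono X$, gives closure for $\F$.

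For $(2) \Rightarrow (1)$ in the torsion case, I would define $t(X)$ as the largest subobject of $X$ in $\T$: by local smallness the subobjects of $X$ contained in $\T$ form a set $\{T_\alpha\}$, AB3 gives their coproduct (which lies in $\T$ by coproduct closure), and the image $t(X)$ of the canonical map $\coprod T_\alpha \to X$ is a subobject of $X$ that lies in $\T$ by quotient closure. To verify $X/t(X) \in \T^\perp$, given $f: T \to X/t(X)$ with $T \in \T$ I would take the image $T'$ of $f$ (a quotient of $T$, hence in $\T$), pull it back along $X \epi X/t(X)$ to obtain $Y \subseteq X$ fitting in an exact sequence $0 \to t(X) \to Y \to T' \to 0$, invoke extension closure to place $Y$ in $\T$, and use the maximality of $t(X)$ to force $Y=t(X)$, so that $T'=0$ and $f=0$.

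For the torsionfree case the strategy is dual, but since well-poweredness does not pass to $\A^{\text{op}}$ it must be redone by hand. The key observation is that local smallness in $\A$ already implies that the quotients of any object form a set, via the bijection between subobjects and quotients given by kernel and cokernel. I would accordingly take the product $\prod F_\alpha$ (which exists by AB3*) over the set of all quotients $X \epi F_\alpha$ with $F_\alpha \in \F$, define $F_X$ as the image of the canonical map $X \to \prod F_\alpha$ (placed in $\F$ by product and subobject closure), and note that by image factorization every morphism from $X$ to an object of $\F$ factors through $X \epi F_X$. Setting $T_X := \Ker(X \epi F_X)$, to verify $T_X \in {}^\perp\F$ I would take $f: T_X \to G$ with $G \in \F$, form the pushout of $f$ along $T_X \mono X$ to obtain an extension $0 \to G \to Y \to F_X \to 0$ with $Y \in \F$ (by extension closure), and then use the factorization of the pushout leg $X \to Y$ through $F_X$ (which exists because $Y\in\F$) to force the composition $T_X \mono X \to Y$, which equals $(G \mono Y) \circ f$, to vanish; monicity of $G \mono Y$ then yields $f=0$.
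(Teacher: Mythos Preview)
Your proof is correct. The paper's own proof is much terser: for $(1)\Rightarrow(2)$ it simply invokes the identities $\T={}^\perp\F$ and $\F=\T^\perp$, from which closure of $\T$ under quotients, extensions and existing coproducts is immediate by applying $\A(-,F)$ for each $F\in\F$; your argument via the coradical $(1:t)$ reaches the same conclusion but is slightly more roundabout. For $(2)\Rightarrow(1)$ the paper only cites \cite[Proposition VI.2.1]{S}, and what you have written is precisely the standard argument one finds there, so the approaches coincide.

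One point worth highlighting: your observation that in an abelian category local smallness (well-poweredness) automatically yields co-well-poweredness, via the bijection between subobjects and quotient objects given by kernel and cokernel, is exactly what is needed to make the torsionfree half of $(2)\Rightarrow(1)$ work under the stated hypotheses. The paper hides this inside the reference, so it is good that you made it explicit. Your pushout argument to show $T_X\in{}^\perp\F$ is also clean; the key step---that every morphism $X\to F$ with $F\in\F$ factors through the canonical epimorphism $X\twoheadrightarrow F_X$---follows from subobject closure of $\F$ (so the image of such a morphism is one of the $F_\alpha$) together with the construction of $F_X$ as the image of $X\to\prod F_\alpha$, exactly as you indicate.
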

\begin{proof}
The implication $(1)\Longrightarrow (2)$ follows from the equalities $\F=\T^\perp$ 
and $\T= \hspace{0,01 cm} ^\perp\F$. For $(2)\Longrightarrow (1)$ see
\cite[Proposition VI.2.1]{S}. 
\end{proof}

Recall that, in any category $\mathcal{C}$, a class of objects $\mathcal{X}$ is called a \emph{generating (resp. cogenerating) class} when, for each object $C\in\text{Ob}(\mathcal{C})$, there is an epimorphism $X_C\twoheadrightarrow C$ (resp. monomorphism $C\rightarrowtail X_C$), for some object $X_C\in\mathcal{X}$. 

We recall some particular cases of torsion pairs:

\begin{definition} \normalfont
Let $\mathbf{t}=(\T,\F)$ be a torsion pair in $\A$. We will say that $\mathbf{t}$ is:

\begin{enumerate}
\item \emph{faithful} (resp. \emph{co-faithful}) when $\F$ (resp. $\T$) is a generating (resp. cogenerating) class of $\A.$

\item \emph{of finite type} when direct limits in $\A$ of objects in $\F$ exist and are in $\F$.
\end{enumerate}
\end{definition}

\begin{remark}\normalfont
In \cite{HRS} faithful (resp. co-faithful) torsion pairs are called cotilting (resp. tilting). In this paper we separate from that terminology, reserving the term 'cotilting' (resp. 'tilting') for torsion pairs defined by 1-cotilting (resp. 1-tilting) objects (see Definition \ref{def. tilting object}).
\end{remark}




\subsection{t-Structures}
In the sequel, the letter $\D$ will  denote a triangulated category and $?[1]:\D \rightarrow \D$ will be its suspension functor. Moreover, we put $?[0]=1_{\D}$ and $?[k]$ the $k$-th power of $?[1]$, for each integer $k$. We will denote the triangles on $\D$ by $\xymatrix@C=1,3pc{X \ar[r] & Y \ar[r] & Z \ar[r]^{+} & }$, or also $\xymatrix@C=1,6pc{X \ar[r] & Y \ar[r] & Z \ar[r] & X[1]}$. 

\begin{definition} \label{Def.t-structure} \normalfont
Let $(\D,?[1])$ be a triangulated category. A \emph{t-structure} on $\D$ is a couple of full subcategories closed under direct summands $(\U,\W)$ such that:
\begin{enumerate}
\item $\U[1]\subseteq \U;$
\item $\D(U,W[-1])=0$, for all $U\in \U$ and $W \in \W$;
\item For each $X\in \D$, there is a distinguished triangle:
$$U_X \rightarrow X \rightarrow V_X \rightarrow U_X[1]$$
with $U_X\in \U$ and $V_X\in \W[-1]$.
\end{enumerate}
\end{definition}
In such case, the subcategory $\U$ is called the \emph{aisle} of the t-structure, and $\W$ is called the \emph{coaisle}. Note that in such case, we have $\W[-1]=\U^{\perp}$ and $\U=\ ^{\perp}(\W[-1])= \ ^{\perp}(\U^{\perp}).$ For this reason, we will write the t-structures using the following notation $(\U,\U^{\perp}[1]).$ On the other hand, the objects $U_X$ and $V_X$ in the previous triangle are uniquely determined by $X$, up to isomorphism, so that the assignment $X \leadsto U_X$ (resp. $X \leadsto V_X$) underlies a functor $\tau_{\U}^{\leq}:\D \rightarrow \U$ (resp. $\tau_{\U}^{>}: \D \rightarrow \U^{\perp}$) which is right (resp. left) adjoint of the inclusion functor $\iota_{\U}:\U \mono \D$ (resp. $\iota_{\U^{\perp}}:\U^{\perp} \mono \D$). The composition $\iota_\U \circ \tau_{\U}^{\leq}:\D \rightarrow \D$ (resp. $\iota_{\U^{\perp}} \circ \tau_{\U}^{>}:\D \rightarrow \D$), which we will still denote by $\tau_{\U}^{\leq}$ (resp. $\tau_{\U}^{>}$) and it is called the \emph{left truncation} (resp. \emph{right truncation}) functor associated to the t-struture $(\U,\U^{\perp}[1]).$ The full subcategory $\Ha=\U \cap \U^{\perp}[1]$ of $\D$ is called the \emph{heart} of the t-structure and it is an abelian category, where the short exact sequences are the triangles in $\D$ having their three terms in $\Ha.$ In particular, we have $\Ext^{1}_{\Ha}(M,N)=\D(M,N[1]),$ for all objects $M$ and $N$ in $\Ha$. Moreover, the canonical morphism $\Ext^{2}_{\Ha}(M,N) \rightarrow \D(M,N[2])$ is a monomorphism in $Ab$, for all objects $M, N \in \Ha$ (see \cite[Remarque 3.1.17]{BBD}). 

\vspace{0,3 cm}

The kernel and cokernel of a morphism $f:M \rightarrow N$ on the heart $\Ha$ are computed as follows: we complete $f$ to a triangle in $\D$ and consider the following diagram, where the row and column are triangles:

$$\xymatrix{ &&\tau_{\U}^{\leq}(Z[-1])[1]  \ar[d] \\   M  \ar[r]^{f} &  N \ar[r] & Z \ar[r]^{+} \ar[d] & \\ && \tau_{\U}^{>}(Z[-1])[1] \ar[d]^{+}  \\ &&&&} $$

From the octahedral axiom, we obtain the following triangles in $\D$ 

$$\xymatrix{\tau_{\U}^{\leq}(Z[-1]) \ar[r] &M  \ar[r]^{p_f} & I \ar[r] ^{+} & & I \ar[r]^{\iota_f} & N \ar[r]&  \tau_{\U}^{>}(Z[-1])[1] \ar[r]^{\hspace{0,8 cm}+} & },$$ 
 where all terms are in $\Ha$ (with $\iota_f \circ p_f=f$). Then we have that $\Ker_{\Ha}(f)=\tau_{\U}^{\leq}(Z[-1])$ and $\Coker_{\Ha}(f)=\tau_{\U}^{>}(Z[-1])[1].$

Recall that if $\D$ and $\A$ are a triangulated and an abelian category,  respectively, then an additive functor $H:\D\longrightarrow\A$ is called \emph{cohomological} when  any triangle $X\stackrel{u}{\longrightarrow}Y\stackrel{v}{\longrightarrow}Z\stackrel{+}{\longrightarrow}$ in $\D$ induces an exact sequence $H(X)\stackrel{H(u)}{\longrightarrow}H(Y)\stackrel{H(v)}{\longrightarrow}H(Z)$ in $\A$. The following proposition,  shown by Beilinson, Bernstein and Deligne in \cite{BBD}, associates to each t-structure in a triangulated category  an intrinsic homology theory. 

\begin{proposition}
Let $(\D,?[1])$ be a triangulated category. If $\sigma=(\U,\U^{\perp}[1])$ is a t-structure in $\D$, then the assignments $X \leadsto \tau_{\U}^{\leq}(\tau_{\U}^{>}(X[-1])[1])$ and $X \leadsto \tau_{\U}^{>}(\tau_{\U}^{\leq}(X)[-1])[1]$ define two naturally isomorphic functors from $\D$ to $\Ha$, which are cohomological.
In the sequel we will fix a (cohomological) functor $H^{0}_{\sigma}: \D \rightarrow \Ha$ that is naturally isomorphic to those two functors. 
\end{proposition}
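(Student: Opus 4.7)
The plan has three steps: (i) verify both assignments take values in $\Ha$; (ii) produce a canonical isomorphism between them using the octahedral axiom; and (iii) verify the cohomological condition. For step (i), I first record the shift-closure consequence of the axiom $\U[1]\subseteq\U$: it forces $\U^{\perp}$ to be closed under $?[-1]$ (since $\D(U,V[-1])=\D(U[1],V)$), hence $\U^{\perp}\subseteq\U^{\perp}[1]=\W$. Given $X\in\D$, the object $W:=\tau_{\U}^{>}(X[-1])[1]$ lies in $\W$ by definition, and in its truncation triangle $\tau_{\U}^{\leq}(W)\to W\to\tau_{\U}^{>}(W)\stackrel{+}{\to}$ both $W$ and $\tau_{\U}^{>}(W)\in\U^{\perp}\subseteq\W$ lie in $\W$. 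Since $\W$ is closed under extensions in $\D$ (the standard property of a coaisle), $\tau_{\U}^{\leq}(W)\in\U\cap\W=\Ha$. A symmetric argument handles the second assignment.

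For step (ii), since $\tau_{\U}^{\leq}(X[-1])[1]\in\U$, the adjunction between $\tau_{\U}^{\leq}$ and the inclusion $\U\hookrightarrow\D$ ensures that the canonical morphism $\tau_{\U}^{\leq}(X[-1])[1]\to X$ factors uniquely through $\tau_{\U}^{\leq}(X)\to X$. I complete the factor morphism to a triangle $\tau_{\U}^{\leq}(X[-1])[1]\to\tau_{\U}^{\leq}(X)\to C\stackrel{+}{\to}$ and apply the octahedral axiom to the composition $\tau_{\U}^{\leq}(X[-1])[1]\to\tau_{\U}^{\leq}(X)\to X$; this produces a further triangle $C\to\tau_{\U}^{>}(X[-1])[1]\to\tau_{\U}^{>}(X)\stackrel{+}{\to}$. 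Now I identify $C$ with both proposed values of $H^{0}_{\sigma}(X)$ via uniqueness of truncation triangles. Shifting the first triangle down by one gives $\tau_{\U}^{\leq}(X[-1])\to\tau_{\U}^{\leq}(X)[-1]\to C[-1]\stackrel{+}{\to}$ with left term in $\U$ and $C[-1]\in\W[-1]=\U^{\perp}$, so this is the truncation triangle of $\tau_{\U}^{\leq}(X)[-1]$ and yields $C\cong\tau_{\U}^{>}(\tau_{\U}^{\leq}(X)[-1])[1]$. The second triangle has $C\in\Ha\subseteq\U$ and $\tau_{\U}^{>}(X)\in\U^{\perp}$, hence it is the truncation triangle of $\tau_{\U}^{>}(X[-1])[1]$ and yields $C\cong\tau_{\U}^{\leq}(\tau_{\U}^{>}(X[-1])[1])$. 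Naturality in $X$ is automatic since each ingredient is either functorial or uniquely determined by a universal property.

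For step (iii), given a triangle $X\to Y\to Z\stackrel{+}{\to}$ in $\D$, the goal is the exactness of $H^{0}_{\sigma}(X)\to H^{0}_{\sigma}(Y)\to H^{0}_{\sigma}(Z)$ in $\Ha$. The standard approach is to build a long exact cohomology sequence $\cdots\to H^{n}_{\sigma}(X)\to H^{n}_{\sigma}(Y)\to H^{n}_{\sigma}(Z)\to H^{n+1}_{\sigma}(X)\to\cdots$ by iteratively applying the octahedral axiom to the truncations of the three terms, obtaining canonical connecting morphisms. This is where I expect the main obstacle: the combinatorics of these octahedra and the verification that the resulting sequence is actually exact (rather than merely a chain complex) is the most technical part of the argument, whereas steps (i) and (ii) reduce essentially to recognising canonical truncation triangles.
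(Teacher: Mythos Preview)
The paper does not actually prove this proposition; it attributes the result to Beilinson--Bernstein--Deligne and simply cites \cite{BBD}. Your outline follows the standard argument from that source, and steps (i) and (ii) are essentially correct.

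There is one small circularity in step (ii): when you shift the first triangle down and assert $C[-1]\in\W[-1]=\U^{\perp}$, you are tacitly using $C\in\W$, which has not yet been established; likewise your claim ``$C\in\Ha\subseteq\U$'' for the second triangle presupposes what you are proving. The easy repair is to show $C\in\Ha$ directly from the two triangles produced by the octahedron, \emph{before} identifying $C$: rotating the first to $\tau_{\U}^{\leq}(X)\to C\to\tau_{\U}^{\leq}(X[-1])[2]\stackrel{+}{\to}$ exhibits $C$ as an extension of objects of $\U$ (using $\U[1]\subseteq\U$), so $C\in\U$; rotating the second to $\tau_{\U}^{>}(X)[-1]\to C\to\tau_{\U}^{>}(X[-1])[1]\stackrel{+}{\to}$ exhibits $C$ as an extension of objects of $\W$ (using $\U^{\perp}[-1]\subseteq\U^{\perp}\subseteq\W$), so $C\in\W$. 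With $C\in\Ha$ in hand, both identifications then go through exactly as you describe. A similar rotation is implicitly needed in your step (i) to see that $\tau_{\U}^{\leq}(W)$ is an extension of objects of $\W$.

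Your honest assessment of step (iii) is accurate: proving that the long sequence is genuinely exact, rather than merely a complex, is the laborious part of the argument in \cite{BBD}; it requires further octahedra to show that each three-term portion comes from a triangle whose vertices all lie in $\Ha$ (hence from a short exact sequence there). Since the paper itself omits any proof, your sketch already provides more detail than what appears in the text.
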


\begin{examples} \label{examples-t-structures} \normalfont
The following examples of t-structures will be of great interest in this paper. 
\begin{enumerate}
\item Let $\A$ be an abelian category for which  $\D(\A)$  exists i.e.  $\D(\A)$ has $Hom$ sets. For each $m\in \mathbb{Z}$, we will denote by $\D^{\leq m}(\A)$ (resp. $\D^{\geq m}(\A)$) the full subcategory of $\D(\A)$ consisting of the cochain complexes $X$ such that $H^{k}(X)=0$, for all $k>m$ (resp. $k < m$). Moreover, we put $\D^{[a,b]}(\A):=\D^{\leq b}(\A) \cap \D^{\geq a}(\A)$ for any integers $a$ and $b$. Then, the pair $(\D^{\leq m}(\A),\D^{\geq m}(\A))$ is a t-structure in $\D(\A)$ whose heart is equivalent to $\A.$ The corresponging left and right truncation functors will be denoted by $\tau^{\leq m}: \D(\A) \rightarrow \D(\A)$ and $\tau^{>m}:\D(\A) \rightarrow \D(\A)$, respectively. For the case $m=0$, the corresponding t-structure is known as the \emph{canonical t-structure} in $\D(\A).$

\item (Happel-Reiten-Smal\o ) Let $\A$ be an abelian category for which $\D(\A)$ exists  and let $\mathbf{t}=(\T,\F)$ be a torsion pair in $\A$. The classes $\U_{\mathbf{t}}:=\{X \in \D^{\leq 0} (\A): H^{0}(X)\in \T\}$ and $\W_{\mathbf{t}}=\{X \in \D^{\geq -1}(\A): H^{-1}(X) \in \F\}$ give rise a t-structure in $\D(\A)$, concretely, $(\U_{\mathbf{t}},\W_{\mathbf{t}})=(\U_{\mathbf{t}},\U_{\mathbf{t}}^{\perp}[1])$. It is called the \emph{Happel-Reiten-Smal\o \ t-structure} associated to $\mathbf{t}$.  We will denote its heart by $\Ht$. In next subsection we relax the hypothesis that $\D(\A)$ has $Hom$ sets, showing that the formation of $\Ht$ from $\A$ and $\mathbf{t}$ is still possible sometimes.

\item Let $\D$ be a triangulated category with coproducts.  An object $X$ in $\D$ is called \emph{compact}, when the functor $\D(X,?):\D \rightarrow Ab$ preserves  coproducts. If $\mathcal{S}$ is a set of compact objects in $\D$, then the pair $(_{}^\perp(\mathcal{S}^{\perp_{\leq 0}}),\mathcal{S}^{\perp_{<0}})$ is a t-structure in $\mathcal{D}$ (see \cite[Theorem 12.1]{KN}).  A t-structure is said to be \emph{compactly generated} if it is obtained in this way from a set of compact objects. We put $\text{aisle} <\mathcal{S}>:=_{}^\perp(\mathcal{S}^{\perp_{\leq 0}})$, which is the smallest full subcategory of $\mathcal{D}$ that contains $\mathcal{S}$ and is closed under coproducts, extensions and non-negative shifts.
\end{enumerate}
\end{examples}

\subsection{The Happel-Reiten-Smal\o \ (HRS) tilting process}
  
  This process stems from the seminal work in \cite{BBD} and was fully developed in \cite{HRS}. In our treatment here we will work in a general framework, by allowing ourselves the freedom of working for the moment with big  triangulated categories. For that, let us define a \emph{big abelian group} as a (proper) class $\mathbb{A}$ of elements together with a map between classes $+:\mathbb{A}\times\mathbb{A}\longrightarrow\mathbb{A}$, $(a,b)\rightsquigarrow a+b$,  called sum, that satisfies the axioms of usual abelian groups. When we add another map of classes $\cdot :\mathbb{A}\times\mathbb{A}\longrightarrow\mathbb{A}$, $(a,b)\rightsquigarrow ab$, called  multiplication,  such that $+$ and $\cdot$ satisfy the usual axioms of a(n associative unital) ring, we call $\mathbb{A}$ a \emph{big ring}. Homomorphisms of big abelian groups and of big rings are defined as for usual groups and rings.  
  
    A \emph{big preadditive category} $\mathcal{A}$ consists then of a class of objects $\text{Ob}(\mathcal{A})$ and, for each pair $(X,Y)\in\text{Ob}(\mathcal{A})\times\text{Ob}(\mathcal{A})$, a big abelian group of morphisms $\mathcal{A}(X,Y)$, both satisfying the usual axioms for a preadditive category. When such a big preadditive category satisfies the usual axioms of additive, abelian or triangulated categories, we will call it a \emph{big additive}, \emph{big abelian} or \emph{big triangulated} category. 
    
     Note that the use of these big categories is also implicit in \cite{HRS} since the authors work with the bounded derived category $\mathcal{D}^b(\A)$ of an abelian category, which need not have $Hom$ sets as Example \ref{ex.Casacuberta-Neeman} below shows.  This use of big additive categories allows more flexibility in  the HRS tilting process and, increasing the universe if necessary, will pose no set-theoretical problems.  In the rest of the paper we adopt the convention that the term \emph{category} means a category with $Hom$ sets.  So the expression 'is a category' will mean 'is a category with $Hom$ sets'. 
    
 The following is an expansion of an example in \cite{CN}.
    
\begin{example} \label{ex.Casacuberta-Neeman} \normalfont
For any big ring $\mathbb{A}$ appearing in this example, we consider the category $\mathbb{A}-\text{Mod}$ of small $\mathbb{A}$-modules.  Its objects are pairs $(M,f)$ consisting of an abelian group $M$
 together with a ring homomorphism  $f:\mathbb{A}\longrightarrow\text{End}_\mathbb{Z}(M)$. The morphisms $(M,f)\longrightarrow (N,g)$ are the homomorphisms $\varphi :M\longrightarrow N$ of abelian groups such that $\varphi\circ f(a)=g(a)\circ\varphi$, for all $a\in\mathbb{A}$. 

Given the class $I$  of all ordinals and an isomorphic class of variables $\{X_\alpha\text{: }\alpha\in I\}$, we shall associate two big rings. The first one is the big ring of polynomials  $R:=\mathbb{Z}[X_\alpha\text{: }\alpha\in I]$, as defined in $\cite{CN}$, where we have slightly changed the notation of that paper. The second one is
the big free ring  $A:=\mathbb{Z}<X_\alpha\text{: }\alpha\in I>$, i.e. its elements are finite $\mathbb{Z}$-linear combinations of words on the alphabet $\{X_\alpha:  \ \alpha\in I\}\cup\{1\}$, and the multiplication extends by $\mathbb{Z}$-linearity the obvious juxtaposition of words with $1$ as multiplicative identity. Note that giving a small $A$-module $(M,f)$ amounts to giving an $I$-indexed class $(f_\alpha )_{\alpha\in I}$ in $\text{End}_\mathbb{Z}(M)$, namely $f_\alpha =f(X_\alpha )$ for all $\alpha\in I$, and theses $f_\alpha$ are required to commutate in the case that $(M,f)$ is a small $R$-module. The morphisms $\varphi :(M,f)\longrightarrow (N,g)$ (in $R-\text{Mod}$ or $A-\text{Mod}$) are just the homomorphisms of abelian groups $\varphi :M\longrightarrow N$ such that $\varphi\circ f_\alpha =g_\alpha\circ\varphi$, for all $\alpha\in I$. 
 
We have an obvious fully faithful exact embedding $u:R-\text{Mod}\hookrightarrow A-\text{Mod}$. A \emph{trivial small ($R$-  or $A$-)module} is an $(M,f)$ such that $f_\alpha =0$, for all $\alpha\in I$. The forgetful functors $R-\text{Mod}\longrightarrow\text{Ab}$ and $A-\text{Mod}\longrightarrow\text{Ab}$ clearly induce an equivalence between each of the respective  subcategories of trivial small modules and $\text{Ab}$.
  By \cite[Lemma 1.1]{CN}, we know that $\mathcal{D}(R-\text{Mod})(\mathbb{Z},\mathbb{Z}[1])\cong\text{Ext}_{R}^1(\mathbb{Z},\mathbb{Z})$ is not a set, where $\mathbb{Z}$ is the trivial small $R$-module associated to $\mathbb{Z}$. Since we have an obvious inclusion $$\mathcal{D}(R-\text{Mod})(\mathbb{Z},\mathbb{Z}[1])\cong\text{Ext}_{R}^1(\mathbb{Z},\mathbb{Z})\hookrightarrow\text{Ext}_{A}^1(\mathbb{Z},\mathbb{Z})\cong\mathcal{D}(A-\text{Mod})(\mathbb{Z},\mathbb{Z}[1])$$ we deduce that both $\mathcal{D}(R-\text{Mod})$ and  $\mathcal{D}(A-\text{Mod})$, and even  $\mathcal{D}^b(R-\text{Mod})$ and  $\mathcal{D}^b(A-\text{Mod})$, are big triangulated categories, i.e. do not have $\text{Hom}$ sets. 
\end{example}

  The following result is the version for big triangulated categories of \cite[Proposition 3.1.1 and 3.1.4]{Ma2}. Recall that a t-structure $\tau 
=(\U,\W)$ is \emph{left (resp. right) nondegenerate} when 
$\bigcap_{n\in\mathbb{Z}}\U[n]=0$ (resp. 
$\bigcap_{n\in\mathbb{Z}}\W[n]=0$), and it is called 
\emph{nondegenerate} when it is left and right nondegenerate. Mattiello's proof is valid here and proves the result, except for the nondegeneracy of $\tau_\mathbf{t}$ and the final part of the statement, that are explicitly proved.

\begin{proposition} \label{prop.HRS tilt}
Let $\mathcal{D}$ any big triangulated category, let $\tau=(\mathcal{U},\mathcal{W})$ be a nondegenerate t-structure in it and denote by $\A$ its heart, which is then a big abelian category, and denote by $H_\tau^0:\mathcal{D}\longrightarrow\mathcal{A}$ the associated cohomological functor.  If $\mathbf{t}:=(\mathcal{T},\mathcal{F})$ is a torsion pair in $\mathcal{A}$, then the pair $\tau_\mathbf{t}:=(\mathcal{U}_\mathbf{t},\mathcal{W}_\mathbf{t})$ given by the following classes is again a nondegenerate t-structure in $\D$:

$$\mathcal{U}_\mathbf{t}=\{X\in\mathcal{D}\text{: }H_\tau^k(X)=0\text{, for }k>0\text{,  and }H_\tau^0(X)\in\mathcal{T} \} $$

$$\mathcal{W}_\mathbf{t}=\{Y\in\mathcal{D}\text{: }H_\tau^k(Y)=0\text{, for } k<-1\text{,  and }H_\tau^{-1}(Y)\in\mathcal{F} \}. $$

Moreover, the pair $\overline{\mathbf{t}}:=(\mathcal{F}[1],\mathcal{T})$ is a torsion pair in the heart $\Ht$ of  $\tau_\mathbf{t}$ and, for each $M\in\Ht$, the associated torsion sequence is  of the form $0\rightarrow H_\tau^{-1}(M)[1]\longrightarrow M\longrightarrow H_\tau^0 (M)\rightarrow 0$. 
\end{proposition}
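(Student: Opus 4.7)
The plan is to establish the three statements that Mattiello's proof does not cover: the nondegeneracy of $\tau_\mathbf{t}$, the inclusions $\T\cup\F[1]\subseteq\Ht$ together with the orthogonality $\Ht(F[1],T)=0$ for all $F\in\F$ and $T\in\T$, and the existence, for each $M\in\Ht$, of the triangle giving rise to the announced torsion sequence. The last two items then identify $\overline{\mathbf{t}}=(\F[1],\T)$ as a torsion pair in $\Ht$ with the stated description.

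For the nondegeneracy I observe that $X\in\Ut[n]$ is equivalent to $X[-n]\in\Ut$, i.e.\ to $H_\tau^k(X)=0$ for $k>-n$ together with $H_\tau^{-n}(X)\in\T$. Hence if $X\in\bigcap_{n\in\mathbb{Z}}\Ut[n]$, then for each integer $k$ I can choose $n>-k$ and conclude $H_\tau^k(X)=0$; thus $X$ has trivial $\tau$-cohomology in every degree, and the nondegeneracy of $\tau$ forces $X=0$. A symmetric argument using $\mathcal{W}_\mathbf{t}[n]$ (whose membership condition reads $H_\tau^k(X)=0$ for $k<-1-n$ and $H_\tau^{-1-n}(X)\in\F$) gives the right nondegeneracy.

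For the torsion pair in $\Ht$, any $T\in\T\subseteq\A$ viewed as an object of $\D$ has $\tau$-cohomology concentrated in degree $0$ equal to $T\in\T$, so $T\in\Ut\cap\mathcal{W}_\mathbf{t}=\Ht$; similarly $F[1]$ has only $H_\tau^{-1}(F[1])=F\in\F$ as nonzero $\tau$-cohomology, so $F[1]\in\Ht$. For the orthogonality I use that $\A=\Ha\subseteq\U\cap\W$, which gives $F\in\U$ and $T[-1]\in\W[-1]=\U^{\perp}$; hence $\Ht(F[1],T)=\D(F[1],T)=\D(F,T[-1])=0$.

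Finally, for the torsion sequence, fix $M\in\Ht$. By definition $H_\tau^k(M)=0$ for $k\notin\{-1,0\}$, $H_\tau^{-1}(M)\in\F$ and $H_\tau^0(M)\in\T$. The $\leq -1$ truncation associated to $\tau$ produces a distinguished triangle
\[ A\longrightarrow M\longrightarrow B\stackrel{+}{\longrightarrow} \]
in $\D$ in which $A$ has $\tau$-cohomology concentrated in degree $-1$ and $B$ has $\tau$-cohomology concentrated in degree $0$. The main point I expect to require care is the identification $A\cong H_\tau^{-1}(M)[1]$ and $B\cong H_\tau^0(M)$, which even in the big setting follows from the uniqueness of truncation triangles together with the fact that any object of $\D$ whose $\tau$-cohomology is concentrated in a single degree $n$ is canonically isomorphic, via the embedding $\A\hookrightarrow\D$ of the heart, to $H_\tau^n(-)[-n]$. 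Once this identification is in place, all three terms of the triangle lie in $\Ht$, and hence the triangle is a short exact sequence in $\Ht$, which is precisely the announced sequence $0\to H_\tau^{-1}(M)[1]\to M\to H_\tau^0(M)\to 0$.
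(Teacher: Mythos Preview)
Your proof is correct and follows essentially the same route as the paper. The nondegeneracy argument is identical. For the torsion sequence the paper shifts $M$ to $M[-1]\in\W$, identifies $\tau_\U^{\leq}(M[-1])$ with $H_\tau^{-1}(M)$, shifts back, and then pins down the third vertex as $H_\tau^0(M)$ via the long exact sequence of $H_\tau^*$; this is exactly your $\leq -1$ truncation argument, with the long exact sequence doing the work of your invoked principle that an object with $\tau$-cohomology concentrated in a single degree is that cohomology appropriately shifted (which is valid here because $\tau$ is nondegenerate). Your explicit verification that $\T,\F[1]\subseteq\Ht$ and $\Ht(F[1],T)=0$ is not spelled out in the paper, which simply defers that part to Mattiello.
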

\begin{proof} For the nondegeneracy of $\tau_\mathbf{t}$, note that we clearly have that $\bigcap_{n\in\mathbb{Z}}\mathcal{U}_{\mathbf{t}}$ consists of the objects $X$ such that $H_\tau^k(X)=0$, for all $k\in\mathbb{Z}$. The nondegeneracy of $\tau$ then implies that $X=0$ (see, e.g., \cite[Lemma 3.3]{NSZ}, adapted to big triangulated categories). This gives the left nondegeneracy of $\tau_\mathbf{t}$ and the right nondegeneracy follows dually. 

Recall that, due to the nondegeneracy of $\tau$, we have that $\U =\{U\in\mathcal{D}\text{: }H_\tau^i(U)=0\text{, for all }i>0\}$ and  $\W =\{W\in\mathcal{D}\text{: }H_\tau^i(W)=0\text{, for all }i<0\}$ (see \cite[Proposition 1.3.7]{BBD}), and hence $\A$ consists of the objects $A\in\D$ such that $H_\tau^i(A)=0$, for  all integers $i\neq 0$. If now $M\in\Ht =\U_\mathbf{t}\cap\W_\mathbf{t}$ is any object in the heart of $\tau_\mathbf{t}$, then $H_\tau^i(M[-1])=H_\tau^{i-1}(M)$ is zero, for all $i<0$, so that  $M[-1]\in\W$ and hence  $H_\tau^{-1}(M)=H_\tau^0(M[-1])\cong\tau_\mathcal{U}^{\leq}(M[-1])$. The associated  truncation triangle with respect to $\tau$ is then  $$H_\tau^{-1}(M)\longrightarrow M[-1]\longrightarrow T[-1]\stackrel{+}{\longrightarrow}, $$ where $T\in\W$. By shift, we get the triangle  $$H_\tau^{-1}(M)[1]\longrightarrow M\longrightarrow T\stackrel{+}{\longrightarrow}. $$
By taking the long exact sequence in $\A$ obtained by applying to the last triangle the functor $H_\tau^0:\D\longrightarrow\A$, we readily see that $H_\tau^i(T)=0$, for all $i\neq 0$, and hence $T\in\A$, and that the induced morphism $H_\tau^0(M)\longrightarrow H_\tau^0(T)\cong T$ is an isomorphism. We then have a triangle $$H_\tau^{-1}(M)[1]\longrightarrow M\longrightarrow H_\tau^0(M)\stackrel{+}{\longrightarrow}$$ in $\D$ with its three terms in $\Ht$, and hence it gives a short exact sequence in this latter category. 

\end{proof}

\begin{definition} \label{def.HRS-tilted t-structure} \normalfont
The t-structure $\tau_\mathbf{t}$ of last proposition is said to be the \emph{HRS-tilt of $\tau$ with respect to $\mathbf{t}$}.  The \emph{HRS process} in $\mathcal{D}$ is a `map' $\Phi_\mathcal{D}$ defined on the class of pairs $(\tau ,\mathbf{t})$, where $\tau$ is a nondegenerate t-structure in $\mathcal{D}$ and $\mathbf{t}$ is a torsion pair in the heart $\mathcal{H}_\tau$ of $\tau$. It is defined by $\Phi_\mathcal{D} (\tau ,\mathbf{t})=(\tau_\mathbf{t},\bar{\mathbf{t}})$. 
\end{definition}

The following is now a very natural question. 

\begin{question}
In the situation of Proposition \ref{prop.HRS tilt}, assume that $\A$ has $Hom$ sets. When is it true that also $\mathcal{H}_\mathbf{t}$ has $Hom$ sets?
\end{question}

The following is the answer:

\begin{proposition} \label{prop.HRS-tilt-big-triangcats}
Let $\tau=(\mathcal{U},\mathcal{W})$ be a nondegenerate t-structure in the big triangulated category $\mathcal{D}$ such that its heart $\A$ is a category. Let $\mathbf{t}=(\mathcal{T},\mathcal{F})$ be a torsion pair in $\A$. The following assertions are equivalent:

\begin{enumerate}
\item The heart $\mathcal{H}_\mathbf{t}$ of the tilted t-structure $\tau_\mathbf{t}$ is a category.
\item $\D(T,F[1])\cong\Ext_{\A}^1(T,F)$ is a set, for all $T\in\mathcal{T}$ and all $F\in\mathcal{F}$. 
\end{enumerate}
In such case, if one puts $\Phi_\mathcal{D}^n(\tau,\mathbf{t})=:(\tau_n,\mathbf{t}_n)$,  where $\Phi_\mathcal{D}^n=\Phi_\mathcal{D}\circ\stackrel{n}{\cdots}\circ\Phi_\mathcal{D}$, then the heart of $\tau_n$ has $Hom$ sets, for each $n>0$. 
\end{proposition}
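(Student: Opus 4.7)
The plan is to reduce every Hom computation in $\Ht$ to four specific groups in $\D$ of the form $\D(T,T')$, $\D(F[1],F'[1])$, $\D(T,F'[1])$ and $\D(F[1],T')$ with $T,T'\in\T$ and $F,F'\in\F$. The crucial tool is the torsion triangle
\[
H_\tau^{-1}(M)[1]\to M\to H_\tau^0(M)\stackrel{+}{\to}
\]
attached to each $M\in\Ht$ by Proposition \ref{prop.HRS tilt}, together with the standard identifications $\Ext_\A^1(X,Y)\cong\D(X,Y[1])$ and the vanishings $\D(\A[1],\A)=\D(\A,\A[-1])=0$, which are immediate consequences of the inclusions $\A[1]\subseteq\U$ and $\A\subseteq\U^\perp[1]$.

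The implication $(1)\Longrightarrow (2)$ is immediate once one checks, via cohomologies, that $T$ and $F[1]$ both belong to $\Ht$, for then $\Ext_\A^1(T,F)\cong\D(T,F[1])=\Ht(T,F[1])$ is a set as soon as $\Ht$ is a category. For the converse I would fix $M,N\in\Ht$ with their associated torsion triangles (writing $F_X:=H_\tau^{-1}(X)$, $T_X:=H_\tau^0(X)$), apply $\D(-,T_N)$ and $\D(-,F_N[1])$ to the triangle of $M$, and rewrite the four building blocks above as $\A(T_M,T_N)$, $\A(F_M,F_N)$, $\Ext_\A^1(T_M,F_N)$ and $0$; the first two are sets because $\A$ is a category, the third by hypothesis, and the last by the vanishing cited above (one also uses $\D(F_M[2],F_N[1])=0$ for the same reason). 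Extracting the resulting long exact sequences shows that $\D(M,F_N[1])$ and $\D(M,T_N)$ are sets, and then applying $\D(M,-)$ to the torsion triangle of $N$ sandwiches $\D(M,N)$ between them, forcing it to be a set as well. The only delicate point is the routine observation that subgroups, quotients and two-step extensions of sets inside a big abelian group are again sets; it is essentially cosmetic and poses no real obstacle.

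For the final iteration statement I would argue by induction on $n$. Direct computation shows that $(2)$ for the pair $(\tau_\mathbf{t},\mathbf{t}_1)$ with $\mathbf{t}_1=\bar{\mathbf{t}}=(\F[1],\T)$ amounts to the triviality
\[
\Ext_{\Ht}^1(F[1],T)=\D(F[1],T[1])=\A(F,T)=0,
\]
while for the next tilt it becomes $\D(T[1],F[2])\cong\Ext_\A^1(T,F)$, a set by the original hypothesis. This alternation persists at every subsequent stage, so the first part of the proposition applied inductively guarantees that each heart $\mathcal{H}_{\tau_n}$ has $\Hom$ sets.
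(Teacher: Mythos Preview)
Your argument for the equivalence $(1)\Longleftrightarrow(2)$ is essentially identical to the paper's: both reduce $\D(M,N)$ to the four building blocks via the torsion triangles of $M$ and $N$, using the same vanishings.

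There is, however, a slip in your inductive step. You write
\[
\Ext_{\Ht}^1(F[1],T)\cong\D(F[1],T[1])\cong\A(F,T)=0,
\]
but the last equality is false in general: the torsion pair condition gives $\A(T,F)=0$, not $\A(F,T)=0$. Fortunately this does not damage the argument, since all that is required is that $\A(F,T)$ be a \emph{set}, which holds because $\A$ is a category. The paper's proof makes exactly this observation (writing ``which is a set due to the hypothesis on $\A$'') and then invokes the already-proved equivalence $(1)\Leftrightarrow(2)$ with $(\Ht,\bar{\mathbf{t}})$ in place of $(\A,\mathbf{t})$ to conclude that the next heart has Hom sets; the induction is then immediate. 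Your ``alternation persists'' phrasing is correct in spirit but obscures this clean one-step mechanism: once you know that passing from $(\tau,\mathbf{t})$ to $(\tau_\mathbf{t},\bar{\mathbf{t}})$ preserves condition $(2)$, there is nothing more to check at subsequent stages.
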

\begin{proof}
$(1)\Longrightarrow (2)$ is clear since, by \cite{BBD}, we have an isomorphism $\Ht(T,F[1])\cong\D(T,F[1])$.

$(2)\Longrightarrow (1)$   For simplicity,   put $H^k:=H_\tau\circ (?[k])$ for each $k\in\mathbb{Z}$.  Let $M,N$ be objects of $\mathcal{H}_\mathbf{t}$. We have a triangle $H^{-1}(N)[1]\longrightarrow N\longrightarrow H^0(N)\stackrel{+}{\longrightarrow}$ (*) in $\D$ (see Proposition  \ref{prop.HRS tilt}).  An application of the cohomological functor $\D(M,?)$ from $\D$ to the category $\text{AB}$ of big abelian groups gives an exact sequence $$\D(M,H^{-1}(N)[1])\longrightarrow \D(M,N)\longrightarrow \D(M,H^0(N)),   $$ Therefore the proof is reduced to check that $\D(M,N)$ is a set when $N=F[1]$, for some $F\in\mathcal{F}$, or $N=T\in\mathcal{T}$, for some $T\in\mathcal{T}$. 

Suppose that $N=T\in\mathcal{T}$. Taking the triangle (*) with $M$ instead of $N$ and applying to it the cohomological functor $\D(?,T)$, we obtain an exact sequence $$\D(H^0(M),T)\longrightarrow\D(M,T)\longrightarrow\D(H^{-1}(M)[1],T)=0. $$ But $\D(H^0(M),T)\cong\A(H^0(M),T)$ is a set by the hypothesis on $\A$. It then follows that $\D(M,T)$ is a set. 

Suppose that $N=F[1]\in\mathcal{F}[1]$ and apply $\D(?,F[1])$ to the triangle of the previous paragraph. We get the exact sequence 
$$0=\D(H^{-1}(M)[2],F[1])\rightarrow \D(H^0(M),F[1])\rightarrow \D(M,F[1])\rightarrow \D(H^{-1}(M)[1],F[1]).$$ But we have that $\D(H^{-1}(M)[1],F[1])\cong \D(H^{-1}(M),F)\cong \A(H^{-1}(M),F)$, which is a set due to the hypothesis on $\A$. Then $\D(M,F[1])$ is a set since, by hypothesis, $\D(H^0(M),F[1])$ is a set. 

Next we consider the tilted torsion pair $\bar{t}=(\mathcal{F}[1],\mathcal{T})$ in $\Ht$. Then, by [BBD], we have that $$\Ext_{\Ht}^1(F[1],T)\cong \D(F[1],T[1])\cong \D(F,T)\cong \A(F,T),$$ which is a set due to the hypothesis on $\A$. Replacing now $\A$ by $\Ht$ in the equivalence of assertions 1 and 2, we get that the tilted t-structucture $\tau_{\overline{\mathbf{t}}}$ with respect to $\overline{\mathbf{t}}$ has a heart $\mathcal{H}_{\overline{\mathbf{t}}}$ which is a category. The last statement of the proposition is then clear. 
\end{proof}

As a particular case, when the big triangulated category is $\D=\D(\A)$, where $\A$ is an abelian category, one can consider the canonical t-structure $\tau=(\mathcal{D}^{\leq 0}(\A),\D^{\geq 0}(\A))$ as initial one. Its heart is $\A$ and the functor $H_\tau^k:\D(\A)\longrightarrow\A$ is the classical $k$-th cohomology functor. As a direct consequence of last proposition, 
we get: 

\begin{corollary} \label{cor.heart-genuine-category}
Let $\A$ be an abelian category and let $\mathbf{t}=(\mathcal{T},\mathcal{F})$ be a torsion pair in $\A$. The following assertions are equivalent:
\begin{enumerate}
\item The heart $\mathcal{H}_\mathbf{t}$ is a  category, i.e. with $Hom$ sets.
\item $\Ext_{\A}^1(T,F)$ is a set (as opposite to a proper class), for all $T\in\mathcal{T}$ and $F\in\mathcal{F}$.
\end{enumerate}
\end{corollary}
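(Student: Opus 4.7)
The plan is to derive this corollary as an immediate specialization of Proposition \ref{prop.HRS-tilt-big-triangcats}, taking as the ambient big triangulated category $\D:=\D(\A)$ and as initial t-structure $\tau$ the canonical one $(\D^{\leq 0}(\A),\D^{\geq 0}(\A))$, whose heart is (equivalent to) $\A$ itself. Since by hypothesis $\A$ is an abelian category (hence has $Hom$ sets), the input condition ``$\A$ is a category'' required by the proposition is satisfied, even though $\D(\A)$ may be a genuinely big triangulated category in the sense of Example \ref{ex.Casacuberta-Neeman}.

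First I would check the one technical prerequisite of Proposition \ref{prop.HRS-tilt-big-triangcats}, namely that $\tau$ is nondegenerate. This is classical and harmless: for the canonical t-structure $H_\tau^k$ is the usual $k$-th cohomology functor, and a complex in $\D(\A)$ with vanishing cohomology in every degree is zero, so $\bigcap_{n\in\mathbb{Z}}\D^{\leq 0}(\A)[n]=0=\bigcap_{n\in\mathbb{Z}}\D^{\geq 0}(\A)[n]$. The HRS-tilt of $\tau$ with respect to $\mathbf{t}$ is then precisely the HRS t-structure $\tau_{\mathbf{t}}=(\U_{\mathbf{t}},\W_{\mathbf{t}})$ of Example \ref{examples-t-structures}(2), and its heart is the $\mathcal{H}_{\mathbf{t}}$ appearing in the statement.

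Next I would invoke Proposition \ref{prop.HRS-tilt-big-triangcats} directly: $\mathcal{H}_{\mathbf{t}}$ is a category if, and only if, $\D(\A)(T,F[1])$ is a set for all $T\in\T$ and $F\in\F$ (objects viewed as stalk complexes concentrated in degree $0$). To finish it remains to identify this morphism class with $\Ext_{\A}^1(T,F)$. This is the standard identification of $\Hom$-groups in the derived category with Yoneda $\Ext$-groups in the heart of the canonical t-structure, valid as well in the big-abelian/big-triangulated setting since the argument proceeds by describing morphisms in $\D(\A)$ via roofs of quasi-isomorphisms and interpreting them as equivalence classes of short exact sequences $0\to F\to E\to T\to 0$ in $\A$.

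The only step that requires any care is this last identification, because one must make sure that the cardinality question is transported correctly: the issue is whether $\D(\A)(T,F[1])$, a priori a big abelian group, is in bijection with $\Ext_{\A}^1(T,F)$. This is precisely the content of the classical proof, which constructs a mutually inverse pair of assignments between Yoneda extensions and roofs in $\D(\A)$; both assignments are manifestly functorial and compatible with the additive structure, hence give an isomorphism of (possibly big) abelian groups, from which the equivalence (1)$\Leftrightarrow$(2) follows. The final clause about iteration of $\Phi_\D$ from Proposition \ref{prop.HRS-tilt-big-triangcats} is not needed here, so no further work is required.
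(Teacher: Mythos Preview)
Your proposal is correct and follows exactly the paper's approach: the paper derives the corollary as an immediate specialization of Proposition \ref{prop.HRS-tilt-big-triangcats} to $\D=\D(\A)$ with the canonical t-structure, noting that the isomorphism $\D(\A)(T,F[1])\cong\Ext_\A^1(T,F)$ is already built into the statement of that proposition via the general identification $\Ext^1_{\Ha}(M,N)\cong\D(M,N[1])$ recalled after Definition \ref{Def.t-structure}. Your extra care about nondegeneracy and the $\Ext$-identification is fine but not strictly needed beyond what the paper already assumes.
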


The following shows that condition 2 of last corollary need not hold in general.

\begin{example} \label{ex.Casacuberta-Neeman2} \normalfont
Let $A=\mathbb{Z}<X_\alpha\text{: }\alpha\in I>$ be as in Example \ref{ex.Casacuberta-Neeman} and consider the torsion pair $\mathbf{t}$ in $A-\text{Mod}$ generated by the trivial small $A$-module $\mathbb{Z}$, i.e.  $\mathbf{t}=(\T,\F):=(_{}^\perp(\mathbb{Z}^\perp),\mathbb{Z}^\perp)$. The heart $\mathcal{H}_\mathbf{t}$ of the associated Happel-Reiten-Smal\o \          t-structure in $\mathcal{D}(A-\text{Mod})$ is a big abelian category, i.e. it does not have $\text{Hom}$ sets. 
\end{example}
\begin{proof}
In order to help with the intution, given a small $A$-module $(M,f)$,  we put $X_\alpha m:=f(X_\alpha )(m)$, for all $m\in M$ and $\alpha\in I$. A trivial small $A$-module is then one such that $X_\alpha M=0$, for all $\alpha\in I$. It is clear that $(F,f)$ is in $\mathcal{F}$ if and only if it does not contain any nonzero trivial $A$-submodule. 

We next consider two (big) ideals $J$ and $J_\beta$ of $A$, the second one depending on the ordinal $\beta\in I\setminus\{0\}$. The ideal $J$ is generated by all variables $X_\alpha$, with $\alpha\neq 0$, while $J_\beta$ is generated by the set $$\{X_\alpha\text{: }\alpha\in I\setminus\{0,\beta \}\}\cup\{X_\beta^2,X_\beta X_0\}. $$ It is clear that $A/J$ is isomorphic to $\mathbb{Z}[X_0]$ and that it is an $A$-module in $\mathcal{F}$ for none of its nonzero elements is annihilated by $X_0$. On the other hand,   the underlying abelian group of $A/J_\beta$ is free with basis $\{X_0^n\text{: }n\in\mathbb{N}\}\cup\{X_0^nX_\beta\text{: }n\in\mathbb{N}\}$. We then get a morphism $\varphi :A/J\oplus A/J\cong \mathbb{Z}[X_0]\oplus\mathbb{Z}[X_0]\longrightarrow A/J_\beta$  in $A-\text{Mod}$,   that takes $(P(X_0),Q(X_0))\rightsquigarrow P(X_0)X_0+Q(X_0)X_\beta$ and whose cokernel is the trivial small $A$-module $\mathbb{Z}$.  Note that if $(P(X_0),Q(X_0))\in\text{Ker}(\varphi )$, so that $P(X_0)X_0+Q(X_0)X_\beta =0$, we then get that $(P(X_0),Q(X_0))=(0,0)$ by considering the $\mathbb{Z}$-basis of $A/J_\beta$ given above.  Therefore $\varphi$ is a monomorphism. 

The last paragraph shows that, for each $\beta\in I\setminus\{0\}$, we have an exact sequence $$0\rightarrow A/J\oplus A/J\longrightarrow A/J_\beta\longrightarrow\mathbb{Z}\rightarrow 0 $$ in $A-\text{Mod}$. It is clear that, for $\beta\neq\gamma$ in $I\setminus\{0\}$, the left $A$-modules $A/J_\beta$ and $A/J_\gamma$ cannot be isomorphic since they have different annihilators. It then follows that $\text{Ext}_A^1(\mathbb{Z},A/J\oplus A/J)$ is not a set, which implies that $\mathcal{H}_\mathbf{t}$ does not have $\Hom$ sets by Corollary \ref{cor.heart-genuine-category}. 
\end{proof}

 \begin{lemma}\label{new lemma} If $V$ is an object of an abelian category $\A$ such that all coproducts of copies of $V$ exist in $\A$ and $\Ext^{1}_{\A}(V,X)$ is a set, for all $X\in \A$, then the canonical map $\Ext^{1}_{\A}(V^{(I)},X) \longrightarrow \Ext^{1}_{\A}(V,X)^{I}$ is a monomorphism, and hence $\Ext^{1}_{\A}(V^{(I)},X)$ is a set,   for each $X\in \A$ and $I$ set. 
\end{lemma}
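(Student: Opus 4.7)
The plan is to exhibit the canonical map as a homomorphism of (big) abelian groups, reduce its injectivity to a splitting criterion, and then use the universal property of the coproduct $V^{(I)}$ to verify that criterion. Concretely, the component of the canonical map at index $i\in I$ is pullback along the structural injection $\iota_i : V\hookrightarrow V^{(I)}$, sending the Yoneda class of an extension $\xi : 0\to X\to E\to V^{(I)}\to 0$ to the class of $\iota_i^*(\xi)$. Since pullback along a fixed morphism is additive with respect to Baer sum, the canonical map is a homomorphism of big abelian groups; hence showing it is a monomorphism reduces, by subtraction in the codomain, to showing that if all $\iota_i^*(\xi)$ are split, then $\xi$ itself is split.

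For the splitting step, suppose $\iota_i^*(\xi)$ splits for every $i$. Unwinding the pullback, a splitting of $\iota_i^*(\xi)$ is precisely a morphism $s_i : V\to E$ such that $\pi\circ s_i=\iota_i$, where $\pi : E\to V^{(I)}$ is the epimorphism in $\xi$. The universal property of the coproduct produces a unique $s : V^{(I)}\to E$ with $s\circ\iota_i=s_i$ for every $i$; then $\pi\circ s$ and $\mathrm{id}_{V^{(I)}}$ agree after composition with every $\iota_i$, so they coincide by the uniqueness part of the universal property. Thus $s$ is a section of $\pi$ and $\xi$ splits, as required.

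Set-hood is then a formality: $\Ext^{1}_{\A}(V,X)^{I}$ is a set, being an $I$-indexed product of sets, so a monomorphism of big abelian groups into it puts the domain in bijection with a subclass of a set, forcing the domain to be a set as well. I do not anticipate any real obstacle; the only point worth being careful about is the tacit verification that Baer sum commutes with pullback so that each component is a group homomorphism, which is the usual Yoneda calculation and carries over unchanged to big abelian categories.
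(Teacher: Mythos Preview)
Your proof is correct and follows essentially the same route as the paper's: both reduce injectivity to showing that an extension $0\to X\to E\stackrel{\pi}{\to} V^{(I)}\to 0$ whose pullbacks along every $\iota_i$ split must itself split, and both build the required section of $\pi$ from the individual lifts $s_i:V\to E$ via the universal property of the coproduct. The only (minor) difference is that you spell out why the canonical map is a group homomorphism, whereas the paper leaves this implicit.
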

\begin{proof} Let $X$ be an object in $\A$ and let $I$ be a set. We consider $\epsilon: 0 \longrightarrow X \longrightarrow M \stackrel{f}{\longrightarrow} V^{(I)} \longrightarrow 0$ an extension in $\A$ such that $\phi(\epsilon)=0,$ where $\phi$ denote the respective canonical assignment $\Ext^{1}_{\A}(V^{(I)},X) \longrightarrow \Ext^{1}_{\A}(V,X)^{I}.$ We will show that $\epsilon$ is a split exact sequence. Indeed, for each $j\in I$, the $j$-th inclusion $\iota_{j}:V \longrightarrow V^{(I)}$ factors through $f:M \longrightarrow V^{(I)}$ since $\phi(\epsilon)=0$. We then get a morphism $g_j:V \longrightarrow M$ such that $f \circ g_j=\iota_j.$ When $j$ varies in $I$, the universal property of coproducts yields a unique morphism $g:V^{(I)} \longrightarrow M$ such that $g \circ \iota_j=g_j,$ for all $j \in I.$ It follows that $f \circ g \circ \iota_j=\iota_j$, for all $j \in I,$ which implies that $f \circ g=1_{V^{(I)}}$. Then $\epsilon$ is a split sequence, and so $\epsilon =0$ in $\Ext^{1}_{\A}(V^{(I)},X) $.
\end{proof}

 In order to exhibit a very useful direct consequence of Corollary \ref{cor.heart-genuine-category}, we need to introduce a few subcategories associated to an object that will play an important role through the paper.

\begin{definition}\label{def. Pres V} \normalfont
Let $\A$ be an abelian category and let $X$ and $V$ be objects of $\A$, where we asume that all (set-indexed) coproducts of copies of $V$ exist in $\A$. We will say that $X$ is \emph{$V$-generated} (resp. \emph{$V$-presented}) when there is an epimorphism of the form $V^{(I)} \epi X$ (resp. an exact sequence $V^{(J)} \longrightarrow V^{(I)} \epi X$) for some set $I$ (resp. sets $I$ and $J$).  We will denote by $\Gen(V)$ and $\Pres(V)$ the classes of $V$-generated and $V$-presented objects, respectively. 

When  $Q\in\text{Ob}(\A)$ is such that all products of copies of $Q$ exist in $\A$,  we get the dual notions of  $Q$-\emph{cogenerated} and $Q$-\emph{copresented} object, and the corresponding subcategories  $\Cogen(Q)$ and $\Copres(Q)$.
\end{definition}

\begin{corollary} \label{cor.heart-quasi-(co)tilting pair}
In the situation of last corollary, suppose that  $\mathcal{T}=\Pres (V)$ (resp. $\mathcal{F}=\Copres(Q)$), for some object $V$ (resp. $Q$) of $\A$ such that all coproducts (resp. products) of copies of $V$ (resp. $Q$) exist in $\A$. The heart $\mathcal{H}_\mathbf{t}$ is a category if, and only if, $\text{Ext}_{\A}^1(V,F)$ (resp. $\Ext_{\A}^1(T,Q)$) is a set, for all $F\in\mathcal{F}$ (resp. $T\in\mathcal{T}$). 
\end{corollary}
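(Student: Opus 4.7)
The plan is to reduce the statement to Corollary \ref{cor.heart-genuine-category}, which already characterizes when $\mathcal{H}_{\mathbf{t}}$ has $\Hom$ sets in terms of $\Ext_{\A}^1(T,F)$ being a set for \emph{every} $T\in\mathcal{T}$ and $F\in\mathcal{F}$. So what the present corollary really says is that, when $\mathcal{T}=\Pres(V)$, it is enough to test this condition on $V$ alone (and dually for $Q$). I will only write the ``Pres'' version; the ``Copres'' version is proved by the obvious dualisation.

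First, the ``only if'' direction is immediate: since $V$ is trivially $V$-presented (via the identity on $V$), we have $V\in\Pres(V)=\mathcal{T}$, and therefore $\Ext_{\A}^1(V,F)$ is a set for all $F\in\mathcal{F}$ by Corollary \ref{cor.heart-genuine-category}.

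For the ``if'' direction, I would fix $F\in\mathcal{F}$ and an arbitrary $T\in\mathcal{T}=\Pres(V)$, and pick a presentation, i.e.\ an exact sequence $V^{(J)}\rightarrow V^{(I)}\stackrel{p}{\twoheadrightarrow}T\rightarrow 0$. Letting $K=\Ker(p)$, the short exact sequence $0\rightarrow K\rightarrow V^{(I)}\stackrel{p}{\rightarrow}T\rightarrow 0$ yields an exact sequence of (a priori big) abelian groups
$$\Hom_{\A}(K,F)\stackrel{\delta}{\longrightarrow}\Ext_{\A}^1(T,F)\stackrel{p^*}{\longrightarrow}\Ext_{\A}^1(V^{(I)},F).$$
Because $\A$ is a genuine category (i.e.\ has $\Hom$ sets), $\Hom_{\A}(K,F)$ is a set, so $\Image(\delta)=\Ker(p^*)$ is a set. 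On the other hand, Lemma \ref{new lemma} furnishes a monomorphism $\Ext_{\A}^1(V^{(I)},F)\hookrightarrow\Ext_{\A}^1(V,F)^I$, and since by hypothesis $\Ext_{\A}^1(V,F)$ is a set and $I$ is a set, the target $\Ext_{\A}^1(V,F)^I$ is a set, whence so is $\Ext_{\A}^1(V^{(I)},F)$. In particular, $\Image(p^*)$ is a set. Now $p^*$ induces a bijection between $\Ext_{\A}^1(T,F)/\Ker(p^*)$ and $\Image(p^*)$, so the underlying class of $\Ext_{\A}^1(T,F)$ is in bijection with $\Ker(p^*)\times\Image(p^*)$, which is a set as a product of two sets. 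Hence $\Ext_{\A}^1(T,F)$ is a set, and Corollary \ref{cor.heart-genuine-category} gives that $\mathcal{H}_{\mathbf{t}}$ is a category.

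The only delicate point I anticipate is the justification of the exact sequence in the category of big abelian groups and the ``$\Ker$ and cokernel both sets implies the total space is a set'' argument; once one is comfortable that all these manipulations are legitimate when $\A$ has $\Hom$ sets (so $\Hom_{\A}(K,F)$ is genuinely a set) and when $\Ext^1$ is treated in the Yoneda sense inside a big abelian category, the proof is just a combination of Corollary \ref{cor.heart-genuine-category} and Lemma \ref{new lemma}.
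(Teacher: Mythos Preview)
Your proof is correct and follows essentially the same route as the paper: reduce to Corollary \ref{cor.heart-genuine-category} by showing that $\Ext_\A^1(T,F)$ is a set via the short exact sequence $0\to K\to V^{(I)}\to T\to 0$ together with Lemma \ref{new lemma}. The paper streamlines one step by observing that, since $T\in\Pres(V)$, the kernel $K$ is an epimorphic image of some $V^{(J)}$ and hence lies in $\Gen(V)=\T$, so that $\Hom_\A(K,F)=0$ and the map $\Ext_\A^1(T,F)\to\Ext_\A^1(V^{(I)},F)$ is already injective---this bypasses your $\Ker(p^*)\times\Image(p^*)$ coset argument entirely.
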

\begin{proof}
The statement for $Q$ is dual of the statement for $V,$ so we just prove the latter one. Note that if $\T=\text{Pres}(V)$ then the equality $\T=\text{Gen}(V)$ also holds.  Let's take any $T\in\mathcal{T}$ and consider an exact sequence $0\rightarrow T'\longrightarrow V^{(I)}\longrightarrow T\rightarrow 0$, with $T'\in\mathcal{T},$ for some set $I$. If $F\in\mathcal{F}$ is any object and we apply the contravariant functor $\A(?,F)$, we obtain the exact sequence 
$$0=\A(T',F)\longrightarrow\Ext_{\A}^1(T,F)\longrightarrow\text{Ext}_{\A}^1(V^{(I)},F)$$ and, by Lemma \ref{new lemma}, we deduce that  $\Ext_{\A}^1(T,F)$ is a set, for all $T\in\mathcal{T}$. Now  Corollary \ref{cor.heart-genuine-category} applies. 

\end{proof}

This allows us a re-interpretation of the HRS process, where the map $\Phi$ acts instead on pairs $(\A,\mathbf{t})$, where $\A$ is an abelian category and $\mathbf{t}=(\T,\F)$ is torsion pair in $\A$ satisfying the equivalent conditions of last corollary. Concretely:

\begin{definition} \label{def.HRS tilting process} \normalfont
A torsion pair $\mathbf{t}=(\T,\F)$ in an abelian category $\A$ will be an \emph{adequate torsion pair} when $\text{Ext}_\A^1(T,F)$ is a set, as opposite to a proper class, for all $T\in\T$ and $F\in\F$. We will denote by $(\mathbf{AB},\mathbf{tor})$ the class of pairs $(\A,\mathbf{t})$ consisting of an abelian category $\A$ and an adequate torsion pair $\mathbf{t}$ in it. The \emph{Happel-Reiten-Smal\o \ (HSR) tilting process} is the map $\Phi :(\mathbf{AB},\mathbf{tor})\longrightarrow(\mathbf{AB},\mathbf{tor})$ given by  $\Phi[(\A,\mathbf{t})]=(\Ht,\bar{\mathbf{t}})$, where $\Ht$ is the heart of the HRS tilt  $\tau_\mathbf{t}$ of the canonical t-structure of $\mathcal{D}(\A)$ with respect to $\mathbf{t}$ (see Definition \ref{def.HRS-tilted t-structure}).
\end{definition}

In particular if, under the hypotheses of last corollary, we assume that $\Ext_\A^1(T,F)$ is a set, for all $T\in\mathcal{T}$ and all $F\in\mathcal{F}$, then, by HRS-tilting iteration, one gets
gets the following diagram of abelian categories, all of them with $Hom$ sets,  and torsion pairs: 

$$\xymatrix{ \A &  \Ht & \Ha_{\overline{\mathbf{t}}} \\ \mathbf{t}=(\T,\F) \ar@{~>}[r] & \overline{\mathbf{t}}=(\F[1],\T[0]) \ar@{~>}[r] & \overline{\overline{\mathbf{t}}}=(\T[1],\F[1])}$$

The following is \cite[Proposition 3.2]{HRS}.

\begin{proposition} \label{prop.HRS1}
Let $\Phi :(\mathbf{AB},\mathbf{tor})\longrightarrow (\mathbf{AB},\mathbf{tor})$ be the HRS tilting process map (see Definition \ref{def.HRS tilting process}). Let  $(\A,\mathbf{t})$ be  in $(\mathbf{AB},\mathbf{tor})$ and put $(\mathcal{B},\bar{\mathbf{t}}):=\Phi [(\A,\mathbf{t})]$. The torsion pair $\mathbf{t}$ is faithful (resp. co-faitful) if, and only if, $\bar{\mathbf{t}}$ is co-faithful (res. faithful). In particular we have induced maps by restriction $\xymatrix{(\mathbf{AB},\mathbf{tor}_{faithful})\ar@<1ex>[r]^{\phi} & \ar@<1ex>[l]^{\phi} (\mathbf{AB},\mathbf{tor}_{cofaithful})}$, where $\mathbf{tor}_{faithful}$ (resp. $\mathbf{tor}_{cofaithful}$) denotes the class of adequate faithful (resp. co-faithful) torsion pairs. 
  
\end{proposition}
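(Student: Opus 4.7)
The plan is to establish the two forward implications (a) ``$\mathbf{t}$ faithful $\Rightarrow \bar{\mathbf{t}}$ co-faithful'' and (b) ``$\mathbf{t}$ co-faithful $\Rightarrow \bar{\mathbf{t}}$ faithful'' by direct construction, and then deduce both converses by applying these forward implications to the tilted pair $(\Ht,\bar{\mathbf{t}})$: since the twice-tilted heart $\Ha_{\bar{\mathbf{t}}}$ is naturally equivalent to $\A[1]$ with $\bar{\bar{\mathbf{t}}}=(\T[1],\F[1])$, the statements ``$\bar{\bar{\mathbf{t}}}$ (co-)faithful'' translate back to ``$\mathbf{t}$ (co-)faithful'', so that (a) applied to $(\Ht,\bar{\mathbf{t}})$ yields the converse of (b), and vice versa.

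For (a), given $M\in\Ht$, I would first represent it by a 2-term complex $X^{-1}\xrightarrow{d}X^0$ in $\D(\A)$ (so that $H^{-1}(M)=\ker(d)\in\F$ and $H^0(M)=\Coker(d)\in\T$), pick an epimorphism $p:F\twoheadrightarrow X^0$ with $F\in\F$ using the generation hypothesis, and form the pullback $Y:=X^{-1}\times_{X^0}F$. The projection $Y\to F$ fits into a short exact sequence $0\to\ker(d)\to Y\to p^{-1}(\mathrm{Im}(d))\to 0$ in $\A$ whose outer terms lie in $\F$ by closure of $\F$ under subobjects, so $Y\in\F$ by closure under extensions (Proposition~\ref{prop. torsion class=ecq}). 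A direct check shows that $(Y\to F)$ is quasi-isomorphic to $(X^{-1}\to X^0)$, giving a representative of $M$ with \emph{both} degrees in $\F$. I would then consider the chain map $f:M=(Y\to F)\to Y[1]=(Y\to 0)$ given by $f^{-1}=\mathrm{id}_Y$ and $f^0=0$; since $Y\in\F$, $Y[1]$ lies in $\Ht$, and the short exact sequence of complexes $0\to F[0]\to M\xrightarrow{f}Y[1]\to 0$ produces a triangle in $\D(\A)$ identifying $\mathrm{cone}(f)\simeq F[1]$. The kernel/cokernel recipe from the preliminaries then gives $\Ker_{\Ht}(f)=\tau_{\U_{\mathbf{t}}}^{\leq}(F[0])=t(F)=0$ and $\Coker_{\Ht}(f)=(1:t)(F)[1]=F[1]$, exhibiting $f$ as the desired monomorphism $M\hookrightarrow Y[1]$ with $Y\in\F$.

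The proof of (b) is formally dual: replace the pullback by the pushout $Y'=X^0\sqcup_{X^{-1}}T$ along a monomorphism $X^{-1}\hookrightarrow T$ with $T\in\T$, check $Y'\in\T$ via the short exact sequence $0\to T/\ker(d)\to Y'\to H^0(M)\to 0$ (both ends in $\T$), and produce the required epimorphism $Y'[0]\twoheadrightarrow M$ in $\Ht$ by the analogous cone computation yielding $\Coker_{\Ht}=(1:t)(T)[1]=0$. I expect the main conceptual hurdle, from which everything else follows almost automatically, to be precisely the realization that the generation (resp.\ cogeneration) hypothesis, combined with the closure of $\F$ (resp.\ $\T$) under subobjects and extensions (resp.\ quotients and extensions), forces the pullback $Y$ to lie in $\F$ (resp.\ the pushout $Y'$ to lie in $\T$): once a 2-term representative with both degrees in $\F$ (resp.\ $\T$) is in hand, the required mono (resp.\ epi) is a canonical truncation-type map, and the derivation of the converses via iteration on $(\Ht,\bar{\mathbf{t}})$ is purely formal.
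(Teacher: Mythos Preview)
The paper does not give its own proof of this proposition; it simply cites \cite[Proposition 3.2]{HRS}. So there is nothing to compare your argument against at the level of the paper itself.

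Your forward implications (a) and (b) are correct and nicely executed. The pullback (resp.\ pushout) trick to replace a two-term representative of $M\in\Ht$ by one with both entries in $\F$ (resp.\ $\T$), followed by the cone computation showing that the obvious truncation map is a monomorphism (resp.\ epimorphism) in $\Ht$, is exactly the right idea and all the verifications go through as you describe.

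The gap is in your derivation of the converses. You assert that the twice-tilted heart $\Ha_{\bar{\mathbf t}}$ is ``naturally equivalent to $\A[1]$'' and then transport the conclusion back. But this identification is \emph{not} available at this point of the paper: it is precisely the content of Proposition~\ref{prop. Equiva A[1]Ht} (stated after the present proposition), and that result carries extra hypotheses (either $\Ht$ has enough projectives or $\A$ has enough injectives, together with $\T$ cogenerating). None of these are assumed here. If instead you mean to iterate the tilt inside the same ambient $\D(\A)$ (where indeed $(\tau_{\mathbf t})_{\bar{\mathbf t}}$ has heart $\A[1]$), then you need (a) and (b) in the generality of Proposition~\ref{prop.HRS tilt}, i.e.\ for an arbitrary nondegenerate t-structure, not just the canonical one on $\D(\A)$. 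Your proof, however, is tied to two-term complexes of objects of $\A$ and to the identification $\D(\A)(X,Y[2])\cong\Ext^2_\A(X,Y)$; it does not transfer verbatim to $\tau_{\mathbf t}$ as initial t-structure.

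Concretely, your iteration would need, for the converse of (b), to deduce ``$\T$ cogenerates $\A$'' from ``$\T$ generates $\Ht$''. Applying your (a) to $(\Ht,\bar{\mathbf t})$ in $\D(\Ht)$ yields only that $\T[1]$ cogenerates the HRS heart formed inside $\D(\Ht)$, and relating that heart back to $\A$ is exactly the nontrivial step you have not supplied. A direct attempt also meets an obstruction: from a monomorphism $t(A)\hookrightarrow F[1]$ in $\Ht$ one extracts a short exact sequence $0\to F\to E\to t(A)\to 0$ with $E\in\F$, but lifting the extension $0\to t(A)\to A\to (1{:}t)(A)\to 0$ along the epimorphism $E\twoheadrightarrow t(A)$ is obstructed by a class in $\Ext^2_\A((1{:}t)(A),F)$, which need not vanish. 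So the converse genuinely requires an additional argument beyond what you have sketched.
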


Furthermore, we have a triangle functor $G:\D^{b}(\Ht) \rightarrow \D^{b}(\A)$ whose restriction to $\Ht$ is naturally isomorphic to the inclusion functor $\Ht\hookrightarrow\D^{b}(\A)$ (see \cite[Proposition 3.1.10]{BBD}). The  functor $G$ is usually called the \emph{realization functor}.  The following proposition shows that in some cases the heart $\Ha_{\overline{\mathbf{t}}}$ is equivalent to $\A[1]$. 

\begin{proposition}{[HRS, Proposition 3.4]\label{prop. Equiva A[1]Ht}} 
Let $\A$ be an abelian category and let $\mathbf{t}=(\T,\F)$ be a torsion pair in $\A$ such that $\T$ is a cogenerating class. The following assertions  hold:
\begin{enumerate}
\item If $\Ht$ has enough projectives, then $\Ha_{\overline{\mathbf{t}}}\cong \A[1]$ via the realization functor;
\item If $\A$ has enough injectives, then $\Ha_{\overline{\mathbf{t}}}\cong \A[1]$ via the realization functor.
\end{enumerate}
In particular, whenever $\A$ is a category with enough projectives or with enough injectives, $\Phi^2 [(\A,\mathbf{t})]\cong (\A,\mathbf{t})$. That is, if $\Phi^2 [(\A,\mathbf{t})]=(\A',\mathbf{t}')$, then there is an equivalence of categories $F:\A\stackrel{\cong}{\longrightarrow}\A'$ which takes $\mathbf{t}$ to $\mathbf{t}'$. 
\end{proposition}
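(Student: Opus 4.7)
The plan is to use the realization functor $G:\D^{b}(\Ht)\to\D^{b}(\A)$ to embed $\Ha_{\overline{\mathbf{t}}}$ into $\D^{b}(\A)$ and identify its image with $\A[1]$. Using Proposition \ref{prop.HRS tilt} applied to the canonical t-structure of $\D^{b}(\Ht)$ and the torsion pair $\bar{\mathbf{t}}=(\F[1],\T)$, any $M\in\Ha_{\overline{\mathbf{t}}}$ fits in a triangle
$$H^{-1}(M)[1]\to M\to H^{0}(M)\to H^{-1}(M)[2]$$
in $\D^{b}(\Ht)$ with $H^{-1}(M)\in\T$ and $H^{0}(M)\in\F[1]$, where the cohomologies are with respect to the canonical t-structure of $\D^{b}(\Ht)$.

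For the forward direction ($G$ maps into $\A[1]$), I would apply $G$ to this triangle. Since $G$ is a triangle functor whose restriction to $\Ht$ is the inclusion $\iota:\Ht\hookrightarrow\D^{b}(\A)$, the image is a triangle $T[1]\to G(M)\to F[1]\to T[2]$ in $\D^{b}(\A)$, where $T=H^{-1}(M)\in\T$ and $F\in\F$ with $F[1]=H^{0}(M)$. The connecting morphism lies in $\D^{b}(\A)(F[1],T[2])\cong\Ext^{1}_{\A}(F,T)$, hence corresponds to a short exact sequence $0\to T\to X\to F\to 0$ in $\A$. Comparing its shifted triangle with the one above gives $G(M)\cong X[1]\in\A[1]$.

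For essential surjectivity, given $X\in\A$, its torsion decomposition $0\to t(X)\to X\to X/t(X)\to 0$ yields a triangle $t(X)[1]\to X[1]\to (X/t(X))[1]\to t(X)[2]$ in $\D^{b}(\A)$ whose outer terms both lie in $\Ht$ (as $t(X)\in\T$ and $(X/t(X))[1]\in\F[1]$). I would then lift the connecting Ext-element to $\D^{b}(\Ht)((X/t(X))[1],t(X)[2])$, obtaining an $M\in\Ha_{\overline{\mathbf{t}}}$ with $G(M)\cong X[1]$. This lifting, and more generally the full faithfulness of $G$ restricted to $\Ha_{\overline{\mathbf{t}}}$, is the main technical point: I need the $\Ext$-matching $\Ext^{n}_{\Ht}(A,B)\cong\D^{b}(\A)(A,B[n])$ for $A,B\in\Ht$ and small $n$ (especially $n=1,2$, with vanishing for $n<0$). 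Under hypothesis (1), projective resolutions in $\Ht$ allow computation of morphisms in $\D^{b}(\Ht)$ via Yoneda-Ext and compatibility with $\D^{b}(\A)(A,B[n])$ is checked by applying $\iota$; under hypothesis (2), injective resolutions in $\A$ of objects of $\Ht$ serve the dual role. Either way, $G$ induces an equivalence $\Ha_{\overline{\mathbf{t}}}\simeq\A[1]$, and this is the step I expect to be the main obstacle.

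Finally, the ``In particular'' statement follows at once: the iterated tilted torsion pair $\overline{\overline{\mathbf{t}}}=(\T[1],\F[1])$ in $\Ha_{\overline{\mathbf{t}}}\cong\A[1]$ corresponds, under the shift equivalence $\A[1]\cong\A$, to $\mathbf{t}=(\T,\F)$ in $\A$, so $\Phi^{2}[(\A,\mathbf{t})]\cong(\A,\mathbf{t})$.
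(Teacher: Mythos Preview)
The paper does not supply its own proof of this proposition: it is stated with the attribution ``[HRS, Proposition 3.4]'' and is immediately followed by the next section, so there is nothing in the paper to compare your argument against directly. Your outline is essentially the argument given in \cite{HRS}: one shows that the realization functor $G:\D^b(\Ht)\to\D^b(\A)$ restricts to an equivalence $\Ha_{\bar{\mathbf{t}}}\simeq\A[1]$, by first identifying the essential image via the torsion triangle and then establishing full faithfulness through the Ext-comparison $\D^b(\Ht)(A,B[n])\to\D^b(\A)(A,B[n])$ for $A,B\in\Ht$ and small $n$. The cases $n\leq 1$ are automatic (by \cite{BBD}), and the hypotheses (1) or (2) are precisely what is needed to handle $n=2$; this is indeed the crux, as you correctly flag.

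One imprecision worth tightening: under hypothesis (2) you write ``injective resolutions in $\A$ of objects of $\Ht$'', but objects of $\Ht$ are complexes over $\A$, not objects of $\A$. What actually happens in the HRS argument is that, since $\T$ is cogenerating and $\A$ has enough injectives, every object of $\Ht$ admits a two-step filtration by stalks of objects of $\A$, and one reduces the Ext-comparison to Ext-groups between objects of $\A$ computed via injective resolutions in $\A$; a d\'evissage then propagates the required isomorphisms. Alternatively, and this is closer to how the result is used later in the present paper (see the proof of Corollary \ref{cor.reverse way}), one may invoke \cite[Theorem I.3.3]{HRS} directly to conclude that $G$ is a triangle equivalence $\D^b(\Ht)\simeq\D^b(\A)$ under either hypothesis, from which the identification $\Ha_{\bar{\mathbf{t}}}\simeq\A[1]$ follows immediately from your first paragraph.
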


 \section{Projective and injective objects in the heart. Quasi-(co)tilting torsion pairs}
 
 \subsection{Quasi-(co)tilting objects and torsion pairs}

We start with two auxiliary lemmas.

\begin{lemma} \label{lem.coproducts-Pres(X)}
Let $\A$ be an abelian category and let $X$  be an object such that all coproducts (resp. products) of copies of $X$  exist in $\A$. Then  all coproducts (resp. products) of objects in  $\Pres(X)$ (resp. $\Copres (X)$) exist in $\A$. 
\end{lemma}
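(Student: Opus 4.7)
The plan is to prove the coproduct statement and invoke duality for the product statement. Given a family $\{Y_i\}_{i\in I}$ in $\Pres(X)$, I would first choose for each $i$ a presentation $X^{(J_i)}\stackrel{f_i}{\longrightarrow}X^{(K_i)}\stackrel{p_i}{\longrightarrow}Y_i\to 0$. Setting $J:=\sqcup_{i\in I}J_i$ and $K:=\sqcup_{i\in I}K_i$, the hypothesis guarantees that $X^{(J)}$ and $X^{(K)}$ exist in $\A$, and by associativity of coproducts these are naturally the coproducts in $\A$ of the families $\{X^{(J_i)}\}$ and $\{X^{(K_i)}\}$ respectively. In particular, coproducts of arbitrary families of copresented-by-$X^{(-)}$ objects of this form exist.

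Next, using the universal property of $X^{(J)}=\coprod_i X^{(J_i)}$, I would assemble the $f_i$ into a single morphism $f:X^{(J)}\longrightarrow X^{(K)}$ with $f\circ v_i=u_i\circ f_i$ for the canonical inclusions $v_i:X^{(J_i)}\hookrightarrow X^{(J)}$ and $u_i:X^{(K_i)}\hookrightarrow X^{(K)}$. Since $\A$ is abelian, $f$ has a cokernel $\pi:X^{(K)}\twoheadrightarrow C$. The composition $\pi\circ u_i\circ f_i=\pi\circ f\circ v_i=0$, and $p_i=\Coker(f_i)$, so there is a unique $q_i:Y_i\longrightarrow C$ with $q_i\circ p_i=\pi\circ u_i$.

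I would then verify that $(C,\{q_i\}_{i\in I})$ is a coproduct of $\{Y_i\}$ in $\A$. Given morphisms $g_i:Y_i\longrightarrow Z$, the universal property of $X^{(K)}$ produces a unique $g:X^{(K)}\longrightarrow Z$ with $g\circ u_i=g_i\circ p_i$. Since $g\circ f\circ v_i=g_i\circ p_i\circ f_i=0$ for every $i$, the universal property of coproducts forces $g\circ f=0$, so $g$ factors uniquely as $g=h\circ\pi$ for some $h:C\longrightarrow Z$. A short diagram chase using that each $p_i$ is epi then gives $h\circ q_i=g_i$, and uniqueness of $h$ is inherited from the cokernel property. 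The dual argument, using products instead of coproducts and a kernel instead of a cokernel, yields the statement for $\Copres(X)$.

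There is essentially no obstacle in the argument; it is a straightforward bookkeeping exercise built on two ingredients guaranteed for free, namely that $\A$ is abelian (so cokernels exist) and that the "big" coproducts $X^{(\sqcup J_i)}$ and $X^{(\sqcup K_i)}$ exist by assumption. As a byproduct of the construction the coproduct $C$ itself lies in $\Pres(X)$, though the lemma only asserts existence in $\A$.
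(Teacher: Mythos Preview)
Your proof is correct and follows essentially the same approach as the paper: both choose presentations, assemble them into a single map $\coprod f_i$ between large coproducts of copies of $X$, take its cokernel, and identify that cokernel with the desired coproduct. The only cosmetic difference is in step four: the paper packages the verification via a commutative diagram of $\Hom$-functors and invokes Yoneda's lemma, whereas you verify the universal property of the coproduct directly by constructing the comparison map and checking uniqueness; these are two phrasings of the same computation.
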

\begin{proof}
The result for $\Copres (X)$ is dual of the one for $\Pres(X)$. We just do the latter one. 
Let $(T_\lambda )_{\lambda\in\Lambda}$ be a family in $\Pres(X)$, fix an exact sequence $X^{(J_\lambda)}\stackrel{f_\lambda}{\longrightarrow}X^{(I_\lambda)}\longrightarrow T_\lambda\rightarrow 0$, with sets $J_\lambda$ and $I_\lambda$, for each $\lambda\in\Lambda$. We then get an induced exact sequence $\coprod_{\lambda\in\Lambda}X^{(J_\lambda )}\stackrel{\coprod f_\lambda}{\longrightarrow}\coprod_{\lambda\in\Lambda}X^{(I_\lambda )}\longrightarrow\text{Coker} (\coprod f_\lambda)\rightarrow 0$. This gives the following commutative diagram of functors $\A\longrightarrow\text{Ab}$, with exact rows:

$$\xymatrix{0 \ar[r] & \A(\Coker(\coprod f_{\lambda}), ?) \ar[r] \ar[d]^{\alpha} &  \A(\coprod_{\lambda \in \Lambda} X^{(I_{\lambda})}, ?) \ar[r] \ar[d]^{\wr} &  \A( \coprod_{\lambda \in \Lambda} X^{(J_{\lambda})},?) \ar[d]^{\wr}\\0 \ar[r] & \prod_{\lambda \in \Lambda} \A(T_{\lambda}, ?) \ar[r] & \prod_{\lambda \in \Lambda} \A(X^{(I_{\lambda})}, ?) \ar[r] & \prod_{\lambda \in \Lambda} \A(X^{(J_{\lambda})},?) }$$

where the left vertical arrow $\alpha$ exists by the universal property of kernels in $\text{Ab}$. By definition of coproducts, the two right vertical arrows are isomorphisms, which in turn implies that $\alpha$ an isomorphism. By Yoneda's lemma, for each $\mu\in\Lambda$, the composition $\mathcal{A}(\text{Coker}(\coprod f_\lambda),?)\stackrel{\alpha}{\longrightarrow}\prod_{\lambda\in\Lambda}\A(T_\lambda ,?)\stackrel{\pi_\mu}{\longrightarrow}\A(T_\mu ,?)$, where $\pi_\mu$ is the projection,  is of the form $u_\mu^*=\A(u_\mu,?)$, for an unique morphism $u_\mu :T_\mu\longrightarrow\text{Coker}(\coprod f_\lambda)$.  It immediately follows that $\text{Coker}(\coprod f_\lambda)$ together with the morphisms $(u_\lambda :T_\lambda\longrightarrow\text{Coker}(\coprod f_\lambda))$ is the coproduct of the $T_\lambda$ in $\A$. 

\end{proof}

\begin{lemma} \label{Gen(V) torsion class}
 Let $\A$ be an abelian category and let $V$ (resp. $Q$) be an object in $\A$ such that all coproducts (resp. products) of copies of $V$ (resp. Q) exist in $\A$.  If $Gen(V)\subseteq \Ker(\Ext^{1}_{\A}(V,?))$ (resp. $\Cogen(Q)\subseteq\Ker(\Ext^1_{\A}(?,Q))$), then the class $\Gen(V)$ (resp. $\Cogen(Q)$)  is a torsion (resp. torsionfree) class in $\A$.
 \end{lemma}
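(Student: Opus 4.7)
The plan is to directly construct a torsion pair $(\Gen(V),\F)$ with an explicit torsion radical, sidestepping any need for local smallness or AB3 on $\A$; the dual assertion for $\Cogen(Q)$ will follow symmetrically. The Ext-vanishing hypothesis is exactly what is needed to kill the obstruction term in a long exact sequence.

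I would set $\F:=\{F\in\A\,:\,\A(V,F)=0\}$ and first check that $\F=\Gen(V)^\perp$. For any $T\in\Gen(V)$ with epimorphism $V^{(I)}\twoheadrightarrow T$, the induced monomorphism $\A(T,F)\hookrightarrow\A(V^{(I)},F)\cong\A(V,F)^I$ shows that $\A(V,F)=0$ forces $\A(T,F)=0$, and the reverse inclusion is immediate since $V\in\Gen(V)$. For the torsion decomposition of an arbitrary $X\in\A$, I would form the canonical map $\psi:V^{(\A(V,X))}\to X$ whose $f$-th component equals $f$ itself---well-defined since all coproducts of copies of $V$ exist and $\A(V,X)$ is a set by the standing Hom-set convention---and let $t(X):=\text{Im}(\psi)\in\Gen(V)$. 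Applying $\A(V,?)$ to $0\to t(X)\to X\to X/t(X)\to 0$ yields
$$\A(V,t(X))\longrightarrow \A(V,X)\longrightarrow \A(V,X/t(X))\longrightarrow \Ext_\A^1(V,t(X)).$$
Every $f:V\to X$ factors as $V\stackrel{\iota_f}{\to}V^{(\A(V,X))}\twoheadrightarrow t(X)\hookrightarrow X$ via the $f$-th coproduct inclusion, so the leftmost arrow is surjective; meanwhile the rightmost term vanishes by hypothesis, since $t(X)\in\Gen(V)$. Hence $\A(V,X/t(X))=0$, i.e.\ $X/t(X)\in\F$, which completes the torsion decomposition and produces the torsion pair $(\Gen(V),\F)$.

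The argument is essentially routine once $t(X)$ is in place; the only mild subtlety is the surjectivity of the leftmost arrow in the display, which is immediate from the construction of $t(X)$ as the image of $\psi$. For the dual statement, I would replace $\psi$ by the canonical morphism $\varphi:X\to Q^{\A(X,Q)}$, take $c(X):=\Ker(\varphi)$ so that $X/c(X)=\text{Im}(\varphi)\in\Cogen(Q)$, and apply the contravariant functor $\A(?,Q)$ to $0\to c(X)\to X\to X/c(X)\to 0$. The analogous reasoning---using that every $g:X\to Q$ factors through $X/c(X)$ via the projection $Q^{\A(X,Q)}\to Q$ onto the $g$-th coordinate, together with $\Ext_\A^1(X/c(X),Q)=0$---gives $\A(c(X),Q)=0$, placing $c(X)$ in the class $\T:=\{T\,:\,\A(T,Q)=0\}={}^\perp\Cogen(Q)$, and establishes the torsion pair $(\T,\Cogen(Q))$ whose torsionfree class is $\Cogen(Q)$.
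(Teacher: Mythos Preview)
Your proof is correct and follows essentially the same route as the paper: both set $\F=V^\perp$, construct the trace $t(X)=\text{Im}\bigl(V^{(\A(V,X))}\to X\bigr)$, and then apply $\A(V,?)$ to the short exact sequence $0\to t(X)\to X\to X/t(X)\to 0$, using the hypothesis $\Ext_\A^1(V,t(X))=0$ to conclude that $X/t(X)\in\F$. Your write-up is slightly more explicit (verifying $\F=\Gen(V)^\perp$ and spelling out the surjectivity of $\A(V,t(X))\to\A(V,X)$), but the argument is the same.
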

 \begin{proof}
 We will prove the assertion for $\Gen(V)$,  the one for $\Cogen(Q)$ following by duality. We will check that  the classes $\T:=\Gen(V)$ and $\F:=\T^\perp =V^\perp$ form a torsion pair, for which we just need to check 
 condition 2 of Definition \ref{def. torsion pair} since condition 1 is clearly satisfied. For any $A\in\text{Ob}(\A)$, we may consider the canonical map $\epsilon_A:V^{(\A(V,A))}\longrightarrow A$. This is the unique morphism such that $\epsilon_A\circ\iota_f=f$, where $\iota_f:V\longrightarrow V^{(\A(V,A))}$ is the $f$-th injection into the coproduct, for all $f\in\A(V,A)$. Its image is usually called the \emph{trace of $V$ in $A$} and is denoted by $\text{tr}_V(A)$. We then get an exact sequence $0\rightarrow \text{tr}_V(A)\hookrightarrow A\longrightarrow A/\text{tr}_V(A)\rightarrow 0$. We clearly have that $\text{tr}_V(A)\in\Gen(V)$. Moreover, we get an induced exact sequence of abelian groups $0\rightarrow\A(V,\text{tr}_V(A))\stackrel{\cong}{\longrightarrow}\A(V,A)\longrightarrow\A(V,A/\text{tr}_V(A))\longrightarrow\Ext_{\A}^1(V,\text{tr}_V(A))=0$. It then follows that $A/\text{tr}_V(A)\in V^\perp =\F$, so that condition 2 of Definition  \ref{def. torsion pair} is satisfied. 
 \end{proof}


We are ready to introduce some types of objects which have special importance in the study of the heart of a t-structure. They are generalizations of corresponding notions in module categories.

\begin{definition} \label{def.sub(co)generated} \normalfont
Let $\A$ be an abelian category and let $V$ be an object such that all coproducts of copies of $V$ exist in $\A$. We will say that an object $X$ is $V$-\emph{subgenerated} when it is isomorphic to a subobject of an object in $\Gen(V).$ The class of $V$-subgenerated objects will be denoted by $\overline{\Gen}(V).$ On the other hand, the class of objects on $\A$ which are isomorphic to direct summands of (resp. finite) coproducts of copies of $V$ will be denoted by $\Add(V)$ (resp. $\add(V)$). Dually, when $Q$ is an object such that all products of copies of $Q$ exist in $\A$, we  call  an object  $Q$-\emph{subcogenerated} when it is epimorphic image of an object in $\Cogen(Q)$. We denote by $\underline{\Cogen}(Q)$ and  $\Produc(Q)$ the subcategories consisting of $Q$-subcogenerated objects and objects isomorphic to direct summands of products of copies of $Q$, respectively.
\end{definition}

\begin{definition}\label{def. tilting object} \normalfont
Let $\A$ be an abelian category. An object $V$ (resp. Q) of $\A$ will be called \emph{quasi-tilting} (resp. \emph{quasi-cotilting}) when all coproducts (resp. products) of copies of $V$ (resp. $Q$) exist in $\A$ and $\Gen(V)=\overline{\Gen}(V) \cap \Ker(\Ext^{1}_{\A}(V,?)).$ (resp. $\Cogen(Q)=\underline{\Cogen}(Q)\cap\text{Ker}(\Ext^1_{\A}(?,Q))$). The corresponding torsion pair $\mathbf{t}=(\Gen(V),V^\perp)$ (resp. $\mathbf{t}=(_{}^\perp Q,\text{Cogen}(Q))$) (see Lemma \ref{Gen(V) torsion class}) is called the \emph{quasi-tilting (resp. quasi-cotilting) torsion pair} associated to $V$ (resp. $Q$). 

When, for such a $V$ (resp. $Q$), one has $\Gen(V)=\Ker(\Ext^1_{\A}(V,?)$ (resp. $\Cogen(Q)=\Ker(\Ext_{\A}^1(?,Q))$) and this class is cogenerating (resp. generating) in $\A$, we will say that $V$ (resp $Q$) is a \emph{1-tilting} (resp. \emph{1-cotilting}) object. The corresponding torsion pair is called the \emph{tilting} (resp. \emph{cotilting}) torsion pair associated to $V$ (resp. $Q$). 
\end{definition}
 
 The proof of the following goes as in module categories (see \cite[Proposition 2.1]{CDT}). 
 
\begin{corollary} \label{cor.CDT}
If $\A$ is an abelian category and $V$ (resp. $Q$) is a quasi-tilting (resp. quasi-cotilting) object of $\A$, then $\text{Gen}(V)=\text{Pres}(V)$ (resp. $\Cogen(Q)=\Copres(Q)$).
\end{corollary}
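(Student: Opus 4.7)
The inclusion $\Pres(V)\subseteq\Gen(V)$ is immediate, so the task is to show $\Gen(V)\subseteq\Pres(V)$; the statement for $\Cogen(Q)$ will follow by duality. The plan is to exploit the canonical (trace) morphism that already appeared in the proof of Lemma \ref{Gen(V) torsion class}.

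Fix $T\in\Gen(V)$ and consider the canonical morphism $\epsilon_T:V^{(\A(V,T))}\longrightarrow T$, characterised by $\epsilon_T\circ\iota_f=f$ for every $f\in\A(V,T)$, where $\iota_f$ is the corresponding injection into the coproduct. Since $T\in\Gen(V)$, the trace $\text{tr}_V(T)=\text{Im}(\epsilon_T)$ equals $T$, so $\epsilon_T$ is an epimorphism. Let $K:=\Ker(\epsilon_T)$, giving the short exact sequence
$$0\longrightarrow K\longrightarrow V^{(\A(V,T))}\stackrel{\epsilon_T}{\longrightarrow}T\longrightarrow 0.$$
The goal reduces to proving $K\in\Gen(V)$: once this is done, any epimorphism $V^{(J)}\twoheadrightarrow K$ composed with the inclusion $K\hookrightarrow V^{(\A(V,T))}$ produces an exact sequence $V^{(J)}\longrightarrow V^{(\A(V,T))}\longrightarrow T\longrightarrow 0$, witnessing $T\in\Pres(V)$.

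To verify $K\in\Gen(V)$ I would use the quasi-tilting identity $\Gen(V)=\overline{\Gen}(V)\cap\Ker(\Ext^1_\A(V,?))$. On the one hand, $K$ is a subobject of $V^{(\A(V,T))}\in\Gen(V)$, hence $K\in\overline{\Gen}(V)$. On the other hand, apply $\A(V,?)$ to the displayed sequence to get the exact sequence
$$\A(V,V^{(\A(V,T))})\stackrel{(\epsilon_T)_*}{\longrightarrow}\A(V,T)\longrightarrow\Ext^1_\A(V,K)\longrightarrow\Ext^1_\A(V,V^{(\A(V,T))}).$$
The defining property of $\epsilon_T$ says precisely that every $f\in\A(V,T)$ is of the form $\epsilon_T\circ\iota_f$, so $(\epsilon_T)_*$ is surjective. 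Moreover $V^{(\A(V,T))}\in\Gen(V)\subseteq\Ker(\Ext^1_\A(V,?))$, whence the right hand $\Ext$ vanishes. Together these give $\Ext^1_\A(V,K)=0$, i.e. $K\in\Ker(\Ext^1_\A(V,?))$. Combining with the previous paragraph, $K\in\Gen(V)$, which closes the argument.

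There is no real obstacle: the entire content is the familiar Colpi–D'Este–Tonolo trick, and the only point that requires a brief check is that the middle term $V^{(\A(V,T))}$ lies in $\Ker(\Ext^1_\A(V,?))$. This is automatic from the quasi-tilting hypothesis (since $V^{(\A(V,T))}\in\Gen(V)$), and it is what makes the surjectivity of $(\epsilon_T)_*$ enough to conclude $\Ext^1_\A(V,K)=0$. The dual argument for a quasi-cotilting object $Q$ uses the canonical morphism $T\longrightarrow Q^{\A(T,Q)}$ and yields $\Cogen(Q)=\Copres(Q)$ by the same bookkeeping.
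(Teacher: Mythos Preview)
Your proof is correct and is precisely the Colpi--D'Este--Tonolo argument that the paper invokes by reference (the paper gives no details beyond citing \cite[Proposition 2.1]{CDT}). You have simply written out what the paper leaves implicit, so there is nothing to add.
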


The natural question of when a quasi-tilting (resp. quasi-cotilting) torsion pair has a heart that is a category, i.e. has $Hom$ sets, has a clear answer:

\begin{corollary} \label{cor.tilting-projdim1}
Let $V$ (resp. $Q$) be a quasi-tilting (resp. quasi-cotilting) object of the abelian category $\A$, and let $\mathbf{t}=(\T,\F)$ the associated torsion pair in $\A$. The following assertions hold:

\begin{enumerate}
\item The heart $\Ht$ is a category (i.e. has $Hom$ sets) if, and only if, $\Ext_\A^1(V,F)$ (resp. $\Ext_\A^1(T,Q)$) is a set, for all $F\in\F$ (resp. $T\in\T$).
\item If $V$ (resp. $Q$) is a 1-tilting (resp. 1-cotilting) object, then  $\Ext_\A^2(V,?)=0$ (resp. $\Ext_\A^2(?,Q)=0$). One says that the projective (resp. injective) dimension of $V$ (resp. $Q$) is less or equal than $1$.
\item  If $V$ (resp. $Q$) is a 1-tilting (resp. 1-cotilting) object, then $\Ht$ is a category, i.e. it has $Hom$ sets.
\end{enumerate}
\end{corollary}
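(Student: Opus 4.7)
The statements for $V$ and $Q$ are dual, so I treat only $V$. Part (1) is immediate from the preceding results: since $V$ is quasi-tilting, Corollary~\ref{cor.CDT} gives $\T=\Gen(V)=\Pres(V)$, so Corollary~\ref{cor.heart-quasi-(co)tilting pair} applies and yields the claimed equivalence.

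For part (3), granting part (1), it suffices to show that $\Ext^1_\A(V,F)$ is a set for every $F\in\F$. Since $V$ is 1-tilting, $\T$ is cogenerating, so one embeds $F\hookrightarrow T$ with $T\in\T$; the quotient $Y:=T/F$ lies in $\T$ because torsion classes are closed under quotients. The long exact sequence obtained by applying $\Hom_\A(V,-)$ to $0\to F\to T\to Y\to 0$, together with $\Ext^1_\A(V,T)=0$ (from $\T=\Ker\Ext^1_\A(V,-)$), realizes $\Ext^1_\A(V,F)$ as a quotient of $\Hom_\A(V,Y)$, hence as a set.

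For part (2), the goal is $\Ext^n_\A(V,X)=0$ for all $n\geq 2$ and $X\in\A$. First, reduce to the torsion case: given $X$, embed $X\hookrightarrow T_0$ with $T_0,\,T_0/X\in\T$ (using cogenerating and closure under quotients); the long exact sequence combined with $\Ext^1_\A(V,T_0/X)=0$ produces the injection $\Ext^2_\A(V,X)\hookrightarrow\Ext^2_\A(V,T_0)$, so it suffices to prove $\Ext^2_\A(V,T)=0$ for $T\in\T$. For such a $T$, I would consider the canonical trace epimorphism $\epsilon_T:V^{(\Hom_\A(V,T))}\twoheadrightarrow T$ with kernel $K$: the tautological surjectivity of $\Hom_\A(V,V^{(\Hom_\A(V,T))})\twoheadrightarrow\Hom_\A(V,T)$ (each $f$ lifts via the $f$-th inclusion into the coproduct composed with $\epsilon_T$), combined with $\Ext^1_\A(V,V^{(\Hom_\A(V,T))})=0$, forces $\Ext^1_\A(V,K)=0$, so $K\in\T$. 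Iterating this construction produces an exact $\Add(V)$-resolution $\cdots\to V^{(I_1)}\to V^{(I_0)}\to T\to 0$ all of whose syzygies lie in $\T$, on which $\Hom_\A(V,-)$ is exact at each stage.

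The principal obstacle is the dimension-shift needed to convert this resolution into the vanishing $\Ext^2_\A(V,T)=0$: one would naively require $\Ext^n_\A(V,V^{(I)})=0$ for $n\geq 2$, which is essentially the claim being proved. I expect the argument to close via a simultaneous induction on $n$ that interlaces the trace presentation $0\to K\to V^{(I)}\to T\to 0$ with the cogenerating embedding $0\to T\to T'\to T''\to 0$ (both with all terms in $\T$), allowing vanishing at level $n$ inside $\T$ to be bootstrapped to level $n+1$ inside $\T$, and mirroring the classical tilting-module argument in which the presence of enough injectives lying in $\T$ closes the dimension-shift automatically.
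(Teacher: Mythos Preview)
Your arguments for parts (1) and (3) are correct and match the paper's proof essentially verbatim.

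For part (2) there is a genuine gap. Your reduction to $\Ext^2_\A(V,T)=0$ for $T\in\T$ is fine, but the $\Add(V)$-resolution strategy does not close: from $0\to K\to V^{(I)}\to T\to 0$ with $K\in\T$ you only obtain $\Ext^2_\A(V,T)\hookrightarrow\Ext^2_\A(V,V^{(I)})$, and the right-hand side is exactly what you are trying to kill. Your proposed ``simultaneous induction'' interlacing the trace presentation with a cogenerating embedding does not break the circularity either: embedding $V^{(I)}\hookrightarrow S$ with $S\in\T$ again reduces $\Ext^2_\A(V,V^{(I)})$ to $\Ext^2_\A(V,S)$, and you are back where you started. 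There is no available base case in $\T$ on which $\Ext^2_\A(V,-)$ is known to vanish, so the induction never gets off the ground. The appeal to ``the classical tilting-module argument'' is misleading, since that argument uses enough injectives (or projectives) to make the dimension shift legitimate, and here $\A$ is an arbitrary abelian category with neither assumed.

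The paper bypasses this entirely by working directly with the Yoneda description of $\Ext^2$. Given a 2-extension $0\to M\to X\stackrel{f}{\to} Y\to V\to 0$ representing $\epsilon\in\Ext^2_\A(V,M)$, one embeds $X\rightarrowtail T$ with $T\in\T$ and pushes out along this monomorphism to obtain an equivalent 2-extension $0\to M\to T\stackrel{g}{\to} T'\to V\to 0$ with $T,T'\in\T$. Now $\mathrm{Im}(g)\in\T$ as a quotient of $T$, so $\Ext^1_\A(V,\mathrm{Im}(g))=0$, which forces the right half $0\to\mathrm{Im}(g)\to T'\to V\to 0$ to split and hence $\epsilon=0$. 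This one-step Yoneda/pushout argument is what you should use; it requires no resolutions and no induction.
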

 \begin{proof}
 (1)  It is a direct consequence of Corollaries \ref{cor.heart-quasi-(co)tilting pair} and \ref{cor.CDT}.
 
 (2) We just do the proof for $V$, the one for $Q$ being dual. Let $0\rightarrow M\longrightarrow X\stackrel{f}{\longrightarrow} Y\longrightarrow V\rightarrow 0$ be an exact sequence in $\A$, representing an element $\epsilon\in\Ext_\A^2(V,M)$. Since $\T$ is a cogenerating class, we can fix a monomorphism $\mu:X\rightarrowtail T$, with $T\in\T$. By taking the pushout of $\mu$ and $f$ we immediately get an exact sequence $0\rightarrow M\longrightarrow T\stackrel{g}{\longrightarrow}T'\longrightarrow V\rightarrow 0$, where $T,T'\in\T$, which also represents $\epsilon$. But then $\epsilon =0$ since $\text{Im}(g)\in\T=\text{Ker}(\Ext_\A^1(V,?))$. 
 
   (3) Let $F\in\F$ be any object. Using the cogenerating condition of $\T$, we take an exact sequence $0\rightarrow F\longrightarrow T_0\longrightarrow T_1\rightarrow 0$, where $T_0,T_1\in\T$. We then get an exact sequence of (in principle big) abelian groups $$\A(V,T_1)\longrightarrow\Ext_A^1(V,F)\longrightarrow\Ext_A^1(V,T_0)=0. $$  It then follows that $\Ext_\A^1(V,F)$ is a set, which, by Corollary \ref{cor.heart-genuine-category}, implies that $\Ht$ has $Hom$ sets. 
 \end{proof}
 
 \subsection{When does the heart of a co-faithful (resp. faithful) torsion pair have a projective generator (resp. injective cogenerator)?}
 
 To answer the question of the title of this subsection we need a few preliminary results. 
 
 \begin{lemma} \label{lem-preservation of (co)products by class-homology}
Let $\mathcal{D}$ be a big triangulated category and $\tau =(\mathcal{U},\mathcal{W})$ be a nondegenerate t-structure in $\mathcal{D}$ whose heart $\A:=\mathcal{U}\cap\mathcal{W}$ is a category, i.e. it has $Hom$ sets. Let  $\mathbf{t}=(\mathcal{T},\mathcal{F})$  be a torsion pair in $\A$ such that $\Ext_{\A}^1(T,F)\cong\mathcal{D}(T,F[1])$ is a set
 (as opposite to a proper class), for all $T\in\T$ and all $F\in\F$ (see Proposition \ref{prop.HRS-tilt-big-triangcats}). The following assertions hold, where $\Ht$ denotes the heart of the tilted t-structure $\tau_\mathbf{t}$:

\begin{enumerate}
\item The functor $(H_{\tau}^0)_{| \Ht}:\Ht\longrightarrow\A$ is left adjoint of the functor $\A\longrightarrow\Ht$ taking $A\rightsquigarrow t(A)$, where $t:\A\longrightarrow\T$ is the torsion radical associated to $\mathbf{t}$. In particular  $(H_\tau^0)_{| \Ht}:\Ht\longrightarrow\A$ preserves all colimits that exist in $\Ht$.
\item The functor $(H_\tau^{-1})_{| \Ht}:\Ht\longrightarrow\A$ is right adjoint of the functor  $\A\longrightarrow\Ht$ taking $A\rightsquigarrow (1:t)(A)[1]$. In particular $(H_\tau^{-1})_{| \Ht}:\Ht\longrightarrow\A$ preserves all limits that exist in $\Ht$. 
\end{enumerate}
\end{lemma}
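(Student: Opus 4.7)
The overall strategy is to exploit the canonical short exact sequence
$$0\to H_\tau^{-1}(M)[1]\to M\to H_\tau^0(M)\to 0$$
in $\Ht$ supplied by Proposition \ref{prop.HRS tilt}, together with the vanishing $\D(A,B[n])=0$ for $A,B\in\A$ and $n<0$ (which follows from the orthogonality axiom of $\tau$ and the fact that $\A[k]\subset\U$ for all $k\geq 0$ and $\A[-k]\subset\W[-1]$ for all $k\geq 1$). Combined with the adjunctions $\iota_\T\dashv t$ and $(1\!:\!t)\dashv\iota_\F$ already present inside $\A$, this should produce both adjunctions at the level of $\Ht$. The fact that $\Ht$ has $Hom$ sets under the given hypothesis is guaranteed by Proposition \ref{prop.HRS-tilt-big-triangcats}, so both sides of the adjunctions below are genuine abelian groups.

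For assertion (1), the plan is as follows. Fix $M\in\Ht$ and $A\in\A$. Since $t(A)\in\T\subset\Ht\subset\D$ and $\Ht$ is a full subcategory of $\D$, we may identify $\Ht(M,t(A))=\D(M,t(A))$. Applying the cohomological functor $\D(-,t(A))$ to the triangle $H_\tau^{-1}(M)[1]\to M\to H_\tau^0(M)\stackrel{+}{\to}$ gives a long exact sequence whose flanking terms are $\D(H_\tau^{-1}(M)[2],t(A))$ and $\D(H_\tau^{-1}(M)[1],t(A))$; both are zero by the vanishing observed above, since they equal $\D(H_\tau^{-1}(M),t(A)[-2])$ and $\D(H_\tau^{-1}(M),t(A)[-1])$ with both arguments in $\A$. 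Hence $\D(M,t(A))\cong\D(H_\tau^0(M),t(A))=\A(H_\tau^0(M),t(A))$. Finally, since $H_\tau^0(M)\in\T$, the adjunction $\iota_\T\dashv t$ yields $\A(H_\tau^0(M),t(A))\cong\A(H_\tau^0(M),A)$, producing the desired natural isomorphism
$$\Ht(M,t(A))\cong\A(H_\tau^0(M),A).$$
Naturality in $M$ and in $A$ is inherited from the functoriality of all the intermediate isomorphisms.

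For assertion (2), I would proceed dually. Fix $A\in\A$ and $M\in\Ht$. Since $(1\!:\!t)(A)[1]\in\F[1]\subset\Ht$, we identify $\Ht((1\!:\!t)(A)[1],M)=\D((1\!:\!t)(A)[1],M)$. Applying $\D((1\!:\!t)(A)[1],-)$ to the same triangle yields a long exact sequence; the two flanking terms $\D((1\!:\!t)(A)[1],H_\tau^0(M)[-1])$ and $\D((1\!:\!t)(A)[1],H_\tau^0(M))$ vanish (they are $\D((1\!:\!t)(A),H_\tau^0(M)[-2])$ and $\D((1\!:\!t)(A),H_\tau^0(M)[-1])$, with both arguments in $\A$). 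Hence $\D((1\!:\!t)(A)[1],M)\cong\D((1\!:\!t)(A)[1],H_\tau^{-1}(M)[1])=\A((1\!:\!t)(A),H_\tau^{-1}(M))$. Since $H_\tau^{-1}(M)\in\F$, the adjunction $(1\!:\!t)\dashv\iota_\F$ gives $\A((1\!:\!t)(A),H_\tau^{-1}(M))\cong\A(A,H_\tau^{-1}(M))$, completing the construction of the adjunction
$$\Ht((1\!:\!t)(A)[1],M)\cong\A(A,H_\tau^{-1}(M)).$$

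The ``in particular'' clauses are immediate from the general principle that left adjoints preserve colimits and right adjoints preserve limits. The only genuine obstacle I foresee is verifying that the Hom-sets really are sets (not proper classes), but this is precisely what Proposition \ref{prop.HRS-tilt-big-triangcats} guarantees under the stated hypothesis on $\Ext_\A^1(T,F)$, so the long exact sequences above live in $\mathrm{Ab}$ and the argument is rigorous. The bookkeeping for naturality is routine because all maps involved come from universal constructions (truncation triangles and (co)units of the adjunctions in $\A$).
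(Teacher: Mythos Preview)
Your argument is correct and rests on the same ingredients as the paper's: the torsion sequence $0\to H_\tau^{-1}(M)[1]\to M\to H_\tau^0(M)\to 0$ in $\Ht$ coming from Proposition \ref{prop.HRS tilt}, the vanishing $\D(A,B[n])=0$ for $A,B\in\A$ and $n<0$, and the adjunction $\iota_\T\dashv t$ inside $\A$.

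The packaging differs slightly. You verify the natural bijection $\Ht(M,t(A))\cong\A(H_\tau^0(M),A)$ by a direct long-exact-sequence computation on the triangle. The paper instead observes that the torsion sequence above is precisely the $\bar{\mathbf{t}}=(\F[1],\T)$-decomposition in $\Ht$, so that $(H_\tau^0)_{|\Ht}$ factors as $\Ht\xrightarrow{(1:\bar t)}\T\hookrightarrow\A$; since each of these two functors has an evident right adjoint (namely $\T\hookrightarrow\Ht$ and $t:\A\to\T$), the composite right adjoint is $A\mapsto t(A)$. This composition-of-adjoints formulation is a bit more conceptual and avoids writing out the exact sequence, but your explicit computation is just the unwinding of the same fact and is equally valid. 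Neither approach offers a real advantage over the other here; the paper's is marginally shorter, yours makes the vanishing steps more visible.
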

\begin{proof}
We just prove assertion 1 since assertion 2 follows by duality. By Proposition \ref{prop.HRS tilt}, given $M\in\Ht$, we have an exact sequence in $\Ht$  $$0\rightarrow H_\tau^{-1}(M)[1]\longrightarrow M\longrightarrow H_\tau^0(M)\rightarrow 0.$$
This sequence is precisely the one associated to the torsion pair $\bar{\mathbf{t}}=(\F[1],\T)$.
 Then the associated torsion radical $\bar{t}$ and coradical $(1:\bar{t})$ with respect to this torsion pair act on objects as $M\rightsquigarrow \bar{t}(M)=H_\tau^{-1}(M)[1]$ and $M\rightsquigarrow (1:\bar{t})(M)=H_\tau^0(M)$, respectively. We can then decompose $(H_\tau^0)_{| \Ht}:\Ht\longrightarrow\A$ as the composition $\Ht\stackrel{(1:\bar{t})\hspace{0,2 cm}}{\longrightarrow}\T\stackrel{\iota}{\hookrightarrow}\A$, where the right arrow is the inclusion functor. Each of the two functors in this composition has a right adjoint, which implies that $(H_\tau^0)_{| \Ht}:\Ht\longrightarrow\A$ has a right adjoint which is the composition $\A\stackrel{t}{\longrightarrow}\T\hookrightarrow\Ht$. 
\end{proof}

The importance of quasi-(co)tilting objects in the study of hearts of HRS t-structures stems from the following fact:

\begin{proposition} \label{prop.projective-generator-implies-qtilting}
Let $\A$ be an  abelian category and let $\mathbf{t}=(\mathcal{T},\mathcal{F})$ be a torsion pair in $\A$. If $\Ht$ is an AB3 (resp. AB3*) abelian category with a projective generator (resp. injective cogenerator) $P$ (resp. $E$), then $H^0(P)$ (resp. $H^{-1}(E)$) is a quasi-tilting (resp. quasi-cotilting) object and $\mathbf{t}$ is the associated quasi-tilting (resp. quasi-cotilting) torsion pair. 
\end{proposition}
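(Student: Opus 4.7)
The plan is to focus on the first assertion, with $V:=H^0(P)$; the statement about $Q:=H^{-1}(E)$ then follows by passing to $\A^{\text{op}}$. By Proposition~\ref{prop.HRS tilt}, the tilted torsion pair on $\Ht$ is $\bar{\mathbf{t}}=(\F[1],\T)$, so $V\in\T$, and by Lemma~\ref{lem-preservation of (co)products by class-homology}(1), $(H^0)_{|\Ht}:\Ht\to\A$ is left adjoint to $t:\A\to\Ht$ and so preserves colimits; since $\Ht$ is AB3, the coproduct $P^{(I)}$ exists in $\Ht$ for every set $I$, whence $V^{(I)}=H^0(P^{(I)})$ exists in $\A$ and lies in $\T$. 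The identity $\T=\Gen(V)$ then splits into $\Gen(V)\subseteq\T$ (by closure of $\T$ under quotients and coproducts) and the reverse inclusion, obtained by choosing, for $T\in\T\subseteq\Ht$, an epimorphism $P^{(I)}\twoheadrightarrow T$ in $\Ht$ (which exists since $P$ generates $\Ht$) and applying the right-exact functor $H^0$ to get an epimorphism $V^{(I)}\twoheadrightarrow H^0(T)=T$ in $\A$. Hence $\mathbf{t}=(\Gen(V),V^\perp)$.

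To show $V$ is quasi-tilting, I first check $\Gen(V)\subseteq\Ker\Ext^1_\A(V,?)$. Applying $\D(?,T[1])$ to the triangle $H^{-1}(P)[1]\to P\to V\stackrel{+}{\to}$ associated with $\bar{\mathbf{t}}$ (see Proposition~\ref{prop.HRS tilt}) for $T\in\T$, the left neighbour $\D(H^{-1}(P),T[-1])$ vanishes for degree reasons and the right neighbour $\D(P,T[1])=\Ext^1_\Ht(P,T)$ vanishes by projectivity of $P$, forcing $\Ext^1_\A(V,T)=\D(V,T[1])=0$.

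The core of the proof is the reverse inclusion $\overline{\Gen}(V)\cap\Ker\Ext^1_\A(V,?)\subseteq\Gen(V)$. Fix $X$ in the left-hand side with a monomorphism $X\hookrightarrow Y$, $Y\in\T$, and its torsion sequence $0\to t(X)\to X\to F\to 0$; the goal is $F=0$. The key point is that any morphism $V\to X$ has image in $\T$ (being a quotient of $V\in\T$) and hence factors through $t(X)$, so $\A(V,t(X))=\A(V,X)$ as subgroups of $\A(V,Y)$. Applying $\A(V,?)$ to the resolutions $0\to t(X)\to Y\to Y/t(X)\to 0$ and $0\to X\to Y\to Y/X\to 0$ (using the vanishings $\Ext^1_\A(V,t(X))=0=\Ext^1_\A(V,X)$) gives two short exact sequences with identified first terms and the same middle term; the five-lemma applied to the naturally commutative square between them shows that the projection $Y/t(X)\twoheadrightarrow Y/X$ induces an isomorphism $\A(V,Y/t(X))\xrightarrow{\sim}\A(V,Y/X)$. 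Inserting this into the long exact sequence of $\A(V,?)$ applied to $0\to F\to Y/t(X)\to Y/X\to 0$ (where $\A(V,F)=0$ because $F\in\F$ and $\Ext^1_\A(V,Y/t(X))=0$) forces $\Ext^1_\A(V,F)=0$.

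Finally I translate this vanishing into $\Ht$ to exploit that $P$ is a generator. The triangle of $0\to F\to Y/t(X)\to Y/X\to 0$ in $\D$ rotates to one whose three terms $Y/t(X),Y/X\in\T$ and $F[1]\in\F[1]$ all lie in $\Ht$, yielding a short exact sequence $0\to Y/t(X)\to Y/X\to F[1]\to 0$ in $\Ht$. Applying the exact functor $\Ht(P,?)$ (exact by projectivity of $P$) and using the adjunction identification $\Ht(P,T')=\A(V,T')$ for $T'\in\T$ (a consequence of Lemma~\ref{lem-preservation of (co)products by class-homology}(1) since $t(T')=T'$), the resulting cokernel coincides with the one computed in $\A$, so $\Ht(P,F[1])\cong\Ext^1_\A(V,F)=0$. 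Since $P$ generates $\Ht$, this forces $F[1]=0$, hence $F=0$ and $X=t(X)\in\T=\Gen(V)$. The principal obstacle is the third paragraph: identifying $\A(V,X)=\A(V,t(X))$ inside $\A(V,Y)$ and leveraging the resulting cokernel comparison to kill $\Ext^1_\A(V,F)$; the translation to $\Ht$ and the use of generatorhood of $P$ is bookkeeping once that is in place.
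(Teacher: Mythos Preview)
Your proof is correct and the overall structure matches the paper's up through the verification that $\T=\Gen(V)$ and $\Gen(V)\subseteq\Ker\Ext^1_\A(V,?)$. The substantive difference is in the reverse inclusion $\overline{\Gen}(V)\cap\Ker\Ext^1_\A(V,?)\subseteq\Gen(V)$.

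The paper argues entirely inside $\A$: it first establishes $\T=\Pres(V)$ (not just $\Gen(V)$), so that for $M$ in the intersection one can fit $M$ into $0\to M\to T_1\to T_2\to 0$ with $T_i\in\T$, pull back along an epimorphism $V^{(\alpha)}\twoheadrightarrow T_2$ whose kernel lies in $\T$, and obtain $0\to M\to Z\to V^{(\alpha)}\to 0$ with $Z\in\T$; then Lemma~\ref{new lemma} (the injection $\Ext^1_\A(V^{(I)},M)\hookrightarrow\Ext^1_\A(V,M)^I$) forces this to split, so $M$ is a summand of $Z\in\T$. Your route instead takes the torsion quotient $F=X/t(X)$, uses a five-lemma comparison of the two resolutions of $Y/t(X)$ and $Y/X$ to kill $\Ext^1_\A(V,F)$, and then passes to $\Ht$, where the rotated triangle becomes a short exact sequence and the adjunction $\Ht(P,T')\cong\A(V,T')$ identifies $\Ht(P,F[1])$ with that cokernel; the generator property of $P$ finishes the job. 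Your argument avoids both the $\Pres(V)$ step and Lemma~\ref{new lemma}, trading them for a direct exploitation of the hypothesis that $P$ generates $\Ht$; the paper's argument is shorter and stays purely in $\A$, but needs those auxiliary inputs. Both are clean; yours makes the role of the generator hypothesis more visibly essential.
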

\begin{proof}
The statement for the injective cogenerator is dual to the one for projective generator. We just do the last one. Let $P$ be as above and put $V:=H^{0}(P)$, and let $P^{(I)}$ denote the coproduct of $I$ copies of it in $\Ht$. We warn that it might not coincide with the corresponding coproduct in $\D(\A)$, if this one exists. By applying  Lemma \ref{lem-preservation of (co)products by class-homology} with $\tau=(\mathcal{D}^{\leq 0}(\mathcal{A}),\mathcal{D}^{\geq 0}(\mathcal{A}))$  the canonical t-structure, we have an isomorphism $H^0(P^{(I)})\cong H^0(P)^{(I)}=V^{(I)}$ in $\A$, so that all coproducts of copies of $V$ exist in $\A$. 

If $T\in\mathcal{T}$ is any object, then, due to the fact that $P$ is a projective generator of $\Ht$,  we have an exact sequence $P^{(I)}\longrightarrow P^{(J)}\longrightarrow T[0]\rightarrow 0$ in $\Ht$. By last paragraph, we get an exact sequence $H^0(P)^{(I)}\longrightarrow H^0(P)^{(J)}\longrightarrow H^0(T[0])=T\rightarrow 0$ in $\A$. We then get that $\mathcal{T}\subseteq\Pres(V)$, the converse inclusion being obvious. So we  have that $\mathcal{T}=\Gen(V)=\Pres(V)$. 

Moreover,  if we consider   the short exact sequence $0\rightarrow H^{-1}(P)[1]\longrightarrow P\longrightarrow V[0]\rightarrow 0$ in $\Ht$ and apply to it the long exact sequence of $\Ext_{\Ht}^*(?,T[0])$, we get  an exact sequence $$0=\Ht(H^{-1}(P)[1],T[0])\longrightarrow\Ext_{\Ht}^1(V[0],T[0])\longrightarrow\Ext_{\Ht}^1(P,T[0])=0,$$ from which we get that $\Ext_{\A}^1(V,T)\cong\Ext_{\Ht}^1(V[0],T[0])=0$, for all $T\in\mathcal{T}$. It then follows that $\mathcal{T}\subseteq\Ker (\Ext_{\A}^1(V,?))=:V^{\perp_1}$, and so $\mathcal{T}=\Gen(V)\subseteq\overline{\Gen}(V)\cap\Ker (\Ext_{\A}^1(V,?))$.  

For the reverse inclusion, given $M\in \overline{\Gen}(V) \cap V^{\perp_1}$, there exist $T_1,\, T_2\in \T$ and an exact sequence in $\A$ as follows:
\[
\xymatrix{
0 \ar[r] & M \ar[r] & T_1 \ar[r] & T_2 \ar[r] & 0.
}
\]
Since $\Pres(V)=\Gen(V)=\T$, we can take an epimorphism $q\colon V^{(\alpha)} \to T_2$ whose kernel belongs to $\T$. Consider the following pullback diagram 
\[
\xymatrix{
0 \ar[r] & M \ar[r] \ar@{=}[d] & Z \ar[r] \ar@{>>}[d] & V^{(\alpha)} \ar[r] \ar@{>>}[d]^{q} & 0 \\ 0  \ar[r] & M \ar[r] & T_1 \ar[r]  & T_2 \ar[r] \ar@{}[ul]|{\text{\tiny P.B.}} & 0
}
\]
Notice that $Z$ is an extension of $T_1$ and the kernel of $q$, so that $Z\in \T$. Taking into account that $M\in V^{\perp_1}=\text{Ker}(\text{Ext}_\A^1(V,?))=\text{Ker}(\text{Ext}_\A^1(V^{(I)},?))$, for each set $I\neq\emptyset$, we get that the first row in the diagram splits, so that $M\in \T$. 
\end{proof}

A first lesson of last proposition is that, in order to identify torsion pairs whose associated heart is a Grothendieck category, one can restrict to the quasi-cotilting ones. The proposition also helps in the following answer to the title of the subsection:

\begin{theorem} \label{thm.tilting theorem}
Let $\A$ be an abelian category and let $\mathbf{t}=(\mathcal{T},\mathcal{F})$ be a torsion pair in $\A$. The following assertions are equivalent:
\begin{enumerate}
\item $\mathbf{t}$ is a tilting torsion pair.
\item $\mathbf{t}$ is a co-faithful torsion pair whose heart $\Ht$ is an AB3 abelian category with a projective generator. 
\item $\Ht$ is an AB3 abelian category with a projective generator and $\bar{\mathbf{t}}=(\F[1],\T[0])$ is a faithful torsion pair in $\Ht$. 
\end{enumerate} 
In this case,  $V$ is a  1-tilting object such that $\T=\Gen(V)$ if,  and only if,  $V[0]$ is a projective generator of $\Ht$. Moreover, an object $P$ of $\Ht$ 
is a projective generator of this latter category if, and only if, it is isomorphic to $V[0]$ for some 1-tilting object $V$ of $\A$ such that $\T=\Gen (V)$. 
\end{theorem}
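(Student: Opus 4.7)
The plan is to prove $(2)\Leftrightarrow(3)$ first, then $(1)\Rightarrow(2)$ (which absorbs the bulk of the work and yields one half of the ``moreover'' clause), then $(2)\Rightarrow(1)$, and finally the uniqueness-up-to-isomorphism statement for projective generators. The equivalence $(2)\Leftrightarrow(3)$ is immediate from Proposition \ref{prop.HRS1}, which says co-faithfulness of $\mathbf{t}$ corresponds precisely to faithfulness of $\bar{\mathbf{t}}$ under HRS-tilting.

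For $(1)\Rightarrow(2)$, assume $V$ is $1$-tilting with $\T=\Gen(V)$, so $\mathbf{t}$ is co-faithful by definition. I would first verify that $V[0]$ is projective in $\Ht$: applying $\D(V,?)$ to the canonical triangle $H^{-1}(M)[1]\to M\to H^{0}(M)[0]\stackrel{+}{\to}$ from Proposition \ref{prop.HRS tilt}, the vanishing of $\Ext^{1}_{\A}(V,H^{0}(M))$ (since $H^{0}(M)\in\T=\Ker\Ext^{1}_{\A}(V,?)$) and of $\Ext^{2}_{\A}(V,H^{-1}(M))$ (Corollary \ref{cor.tilting-projdim1}(2)) forces $\Ext^{1}_{\Ht}(V[0],M)\cong\D(V,M[1])=0$. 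For the weak generator property, given $0\neq M\in\Ht$: if $H^{0}(M)\neq 0$, a nonzero map $V\to H^{0}(M)$ exists because $H^{0}(M)\in\Gen(V)$ and lifts along $M\twoheadrightarrow H^{0}(M)[0]$ by projectivity; if $M=F[1]$ with $0\neq F\in\F$, then $\Hom_{\Ht}(V[0],F[1])\cong\Ext^{1}_{\A}(V,F)$ must be nonzero, since otherwise $F\in\Ker\Ext^{1}_{\A}(V,?)\cap\F=\Gen(V)\cap\F=0$.

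The most delicate step is establishing AB3 for $\Ht$. I would show that $V^{(I)}$ is itself a $1$-tilting object with the same torsion pair as $V$: indeed $\Gen(V^{(I)})=\Gen(V)$ because $V$ is a direct summand of $V^{(I)}$, and Lemma \ref{new lemma} forces $\Ker\Ext^{1}_{\A}(V^{(I)},?)=\Ker\Ext^{1}_{\A}(V,?)$; in particular $\Ext^{2}_{\A}(V^{(I)},?)=0$. Using a resolution $0\to F\to T_{0}\to T_{1}\to 0$ (for $F\in\F$, $T_{j}\in\T$, available by co-faithfulness), together with $\Ext^{1}_{\A}(V^{(I)},T_{0})=0$ and the exactness of products in $\mathrm{Ab}$, I get the canonical isomorphism $\Ext^{1}_{\A}(V^{(I)},F)\cong\prod_{I}\Ext^{1}_{\A}(V,F)$. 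Applying $\D(V^{(I)},?)$ to the canonical triangle and comparing with $\prod_{I}\D(V,?)$ then identifies $\Hom_{\Ht}(V^{(I)}[0],M)\cong\prod_{I}\Hom_{\Ht}(V[0],M)$ for every $M\in\Ht$, so $V^{(I)}[0]$ is the coproduct of $I$ copies of $V[0]$ in $\Ht$. Combined with projectivity and the weak generator property, the trace argument shows the canonical map $V^{(\Hom_{\Ht}(V[0],M))}[0]\to M$ is an epimorphism; taking cokernels of coproducts of such presentations yields arbitrary coproducts in $\Ht$, whence AB3 with $V[0]$ as a projective generator.

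For $(2)\Rightarrow(1)$, Proposition \ref{prop.projective-generator-implies-qtilting} gives that $V:=H^{0}(P)$ is quasi-tilting with $\T=\Gen(V)$; co-faithfulness forces $\overline{\Gen}(V)=\A$, so the quasi-tilting equality $\Gen(V)=\overline{\Gen}(V)\cap\Ker\Ext^{1}_{\A}(V,?)$ collapses to $\Gen(V)=\Ker\Ext^{1}_{\A}(V,?)$, making $V$ a $1$-tilting object. The biconditional about $V$ in the ``moreover'' clause then follows by combining $(1)\Rightarrow(2)$ and $(2)\Rightarrow(1)$, noting that $V[0]\in\Ht$ forces $V\in\T$ and $V=H^{0}(V[0])$. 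For the uniqueness statement, given any projective generator $P$ of $\Ht$, the short exact sequence $0\to H^{-1}(P)[1]\to P\to H^{0}(P)[0]\to 0$ splits because $H^{0}(P)[0]$ is itself projective (by $(1)\Rightarrow(2)$ applied to the $1$-tilting object $V:=H^{0}(P)$), giving $P\cong V[0]\oplus F[1]$ with $F:=H^{-1}(P)\in\F$. Then $F[1]$ is a projective summand of $P$ in $\Ht$, so $\Ext^{1}_{\Ht}(F[1],T[0])\cong\Hom_{\A}(F,T)$ vanishes for every $T\in\T$; the cogenerating condition then provides an embedding of $F$ into some such $T$, forcing $F=0$ and $P\cong V[0]$.
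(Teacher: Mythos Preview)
Your proof is correct and follows essentially the same architecture as the paper's: the equivalence $(2)\Leftrightarrow(3)$ via Proposition~\ref{prop.HRS1}, the projectivity of $V[0]$ by reducing to stalks, the identification of $V^{(I)}[0]$ as the coproduct in $\Ht$ (the paper isolates this as Lemma~\ref{lem.auxiliar}, using the same $0\to F\to T_0\to T_1\to 0$ trick you use), and then Lemma~\ref{lem.coproducts-Pres(X)} to bootstrap to AB3. Your route to ``$V[0]$ generates'' via the weak-generator plus projectivity trace argument is a minor variant of the paper's, which instead uses directly that $\T[0]$ is generating in $\Ht$ (by faithfulness of~$\bar{\mathbf{t}}$) together with $\T=\Pres_\A(V)$.

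The one genuinely different step is your final uniqueness argument. The paper argues that any projective generator $P$ is a summand of some $V^{(I)}[0]$ (since $V[0]$ generates and $P$ is projective), whence $H^{-1}(P)=0$. You instead observe that the canonical sequence $0\to H^{-1}(P)[1]\to P\to H^0(P)[0]\to 0$ splits because $H^0(P)[0]$ is itself projective, and then kill the summand $F[1]$ via $\Ext^1_{\Ht}(F[1],T[0])\cong\A(F,T)$ and the cogenerating property of~$\T$. Both arguments are short; yours is perhaps a touch more transparent since it avoids invoking the coproduct identification a second time.
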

\begin{proof}
Note that in any of assertions (1)-(3) the class $\mathcal{T}$ is  cogenerating in $\mathcal{A}$. This is clear in assertions (1) and (2), and for assertion (3) it follows from Proposition \ref{prop.HRS1}. 

$(2)\Longleftrightarrow (3)$ is a consequence of this last mentioned proposition (= \cite[Proposition 3.2]{HRS}).

$(1)\Longrightarrow (2)$ Let $V$ be a 1-tilting object of $\A$ such that $\mathbf{t}=(\Gen(V),V^\perp)$. We start by proving that $V[0]$ is a projective object of $\Ht$, i.e. that $\text{Ext}_{\Ht}^1(V[0],M)=0$, for all $M\in\Ht$. But, taking into account the associated exact sequence $0\rightarrow H^{-1}(M)[1]\longrightarrow M\longrightarrow H^0(M)[0]\rightarrow 0$, the task reduces to the case when $M\in\T[0]\cup\F[1]$. If $M=T[0]$, with $T\in\T=\text{Ker}(\Ext_{\A}^1(V,?))$, then we have $\Ext_{\Ht}^1(V[0],T[0])\cong\Ext_\A^1(V,T)=0$. On the other hand,  if $F\in\F$ we have $\Ext_{\Ht}^1(V[0],F[1])\cong\Ext_\A^2(V,F)=0$ (see \cite[Remarque 3.1.17]{BBD} and Corollary \ref{cor.tilting-projdim1}). Note that what we have done with $V$ can be done with $V^{(I)}$, for any set $I\neq\emptyset$. That is, the argument also proves that $V^{(I)}[0]$ is a projective object of $\Ht$, for  for all sets $I$. 

Lemma \ref{lem.auxiliar} below says now that the stalk complex $V^{(I)}[0]$ is the coproduct in $\Ht$ of $I$ copies of $V[0]$. Moreover $\mathcal{T}[0]$ is a generating class in $\mathcal{H}_\mathbf{t}$ since $\bar{\mathbf{t}}=(\mathcal{F}[1],\mathcal{T}[0])$ is a faithful torsion pair due to Proposition \ref{prop.HRS1}. By the equality $\mathcal{T}=\text{Pres}_\mathcal{A}(V)$, we then get that $\mathcal{T}[0]\subseteq\text{Gen}_{\mathcal{H}_\mathbf{t}}(V[0])$, from which one immediately gets that $\mathcal{H}_\mathbf{t}=\text{Gen}_{\mathcal{H}_\mathbf{t}}(V[0])=\text{Pres}_{\mathcal{H}_\mathbf{t}}(V[0]).$
 Applying now Lemma \ref{lem.coproducts-Pres(X)}, we conclude that arbitrary coproducts exist in $\Ht$, so that this is an AB3 abelian category, with $V[0]$ as a projective generator.

\vspace*{0.3cm}

$(2)\Longleftrightarrow(3)\Longrightarrow (1)$ By Proposition \ref{prop.projective-generator-implies-qtilting} we know that $\mathbf{t}$ is a  quasi-tilting torsion pair. Let $V$ be a quasi-tilting object such that $\T=\Gen(V)$. Since $\mathbf{t}$ is co-faithful, i.e. $\T$ is a cogenerating  class in $\A$ we get that $\overline{\Gen}(V)=\A$, which then implies that $\overline{\Gen}(V)\cap\text{Ker}(\Ext_\A^1(V,?))=\text{Ker}(\Ext_\A^1(V,?))$, so that $\Gen(V)=\text{Ker}(\Ext_\A^1(V,?))$ and, hence, $V$ is a 1-tilting object. 

For the final statement, the proof of implication $(1)\Longrightarrow (2)$ shows that if $V$ is a 1-tilting object of $\A$ defining $\mathbf{t}$, then $V[0]$ is a projective generator of $\Ht$. It remains to prove that if $P$ is projective generator of $\Ht$ then $P\cong V[0]$ for such a 1-tilting object. By Proposition \ref{prop.projective-generator-implies-qtilting} we know that $V:=H^{0}(P)$ is a quasi-tilting object associated to $\mathbf{t}$, and by the argument in $(2)\Longrightarrow (1),$ it is even a 1-tilting object of $\A$. Then,  by  implication $(1)\Longrightarrow (2)$, we also know that $V[0]$ is  a projective generator of $\Ht$. It then follows that $P$ is a direct summand of the coproduct in $\Ht$ of $I$ copies of $V[0]$, for some set $I$. By Lemma \ref{lem.auxiliar} below, we then get that $P$ is a direct summand of $V^{(I)}[0]$, which implies that $H^{-1}(P)=0$ and hence that $P\cong V[0]$. 
\end{proof}

\begin{lemma} \label{lem.auxiliar}
Let $\A$ be an abelian category, let $V$ be a 1-tilting object, let $\mathbf{t}=(\Gen(V),V^\perp)$ be the associated torsion pair in $\A$ and let $\Ht$ be the heart of the associated HRS t-structure in $\D(\A)$. For each set $I$, the coproduct of $I$ copies of $V[0]$ exists in $\Ht$ and it is precisely the stalk complex $V^{(I)}[0]$. 
\end{lemma}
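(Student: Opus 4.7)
The plan is to verify the universal property of the coproduct directly: show that for every $M\in\Ht$ the canonical map
$$\phi_M \colon \Ht(V^{(I)}[0], M) \longrightarrow \prod_{i\in I}\Ht(V[0], M),$$
given by precomposition with the coproduct injections $\iota_i\colon V[0]\hookrightarrow V^{(I)}[0]$ (which live inside $\A[0]\subseteq\Ht$), is a bijection. Once this is known for all $M$, the pair $(V^{(I)}[0],(\iota_i))$ satisfies the universal property of the coproduct in $\Ht$, which is exactly the claim.

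To establish the bijection, I would use the torsion sequence
$$0\longrightarrow H^{-1}(M)[1]\longrightarrow M\longrightarrow H^0(M)[0]\longrightarrow 0$$
in $\Ht$ associated to the tilted pair $\bar{\mathbf{t}}=(\F[1],\T[0])$ from Proposition~\ref{prop.HRS tilt}, together with the projectivity in $\Ht$ of both $V[0]$ and $V^{(I)}[0]$, which was already established in the proof of Theorem~\ref{thm.tilting theorem} (the argument for $V$ there applies verbatim to $V^{(I)}$, since $V^{(I)}\in\T$ and $\Ext_\A^2(V^{(I)},F)=0$). Projectivity turns the application of $\Ht(V[0],-)$ and $\Ht(V^{(I)}[0],-)$ to that sequence into short exact sequences, and $\phi_M$ fits into a ladder whose outer vertical arrows are $\phi_{T[0]}$ and $\phi_{F[1]}$, with $T=H^0(M)\in\T$ and $F=H^{-1}(M)\in\F$. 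By the five lemma it suffices to show these two are bijections.

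For $\phi_{T[0]}$ the statement reduces to $\A(V^{(I)},T)\cong \A(V,T)^I$, which is just the universal property of the coproduct $V^{(I)}$ in $\A$. For $\phi_{F[1]}$, identified with the natural map $\Ext^1_\A(V^{(I)},F)\to\Ext^1_\A(V,F)^I$, injectivity is Lemma~\ref{new lemma}. For surjectivity I would invoke the $1$-tilting hypothesis: since $\T$ is cogenerating and closed under quotients, one has an exact sequence $0\to F\to T_0\to T_1\to 0$ with $T_0,T_1\in\T=\Ker\Ext_\A^1(V,-)$. Applying $\A(V,-)$ and $\A(V^{(I)},-)$ produces the connecting surjections $\A(V,T_1)\twoheadrightarrow\Ext^1_\A(V,F)$ and $\A(V^{(I)},T_1)\twoheadrightarrow\Ext^1_\A(V^{(I)},F)$, which fit into a commutative square in which the left vertical arrow $\A(V^{(I)},T_1)\to\A(V,T_1)^I$ is already known to be an isomorphism; a trivial diagram chase then yields surjectivity of $\phi_{F[1]}$.

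The main delicate point is precisely the control of $\Ext^1_\A(V^{(I)},F)$: in a general abelian category the coproduct in the first slot need not turn into a product outside, so the identification $\Ext^1_\A(V^{(I)},F)\cong\Ext^1_\A(V,F)^I$ is not free. The combination of Lemma~\ref{new lemma} (for injectivity) with the cogenerating property of $\T$ and the vanishing $\Ext^1_\A(V,T_0)=0$ (for surjectivity) is exactly where the $1$-tilting hypothesis on $V$ is essential; without it, the coproduct $V^{(I)}[0]$ would not in general compute the coproduct of $V[0]$ in $\Ht$.
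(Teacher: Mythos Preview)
Your proof is correct and follows essentially the same route as the paper's: reduce to the torsion decomposition of $M$, use projectivity of $V^{(I)}[0]$ in $\Ht$, handle the $\T[0]$-case by the coproduct property in $\A$, and handle the $\F[1]$-case via a cogenerating sequence $0\to F\to T_0\to T_1\to 0$ with $T_0,T_1\in\T$. The only cosmetic difference is that the paper obtains both injectivity and surjectivity of $\Ext^1_\A(V^{(I)},F)\to\Ext^1_\A(V,F)^I$ at once from the single cokernel diagram (the two left vertical arrows being isomorphisms forces the third), whereas you separate injectivity (Lemma~\ref{new lemma}) from surjectivity; either way works.
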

\begin{proof}
Let $\iota_j:V\longrightarrow V^{(I)}$ denote the $j$-th injection into the coproduct in $\A$, for each $j\in I$. For each $N\in\Ht$ we have an induced morphism $\gamma_{{}_{N}}:\Ht (V^{(I)}[0],N)\longrightarrow\Ht (V[0],N)^I$, which is the unique morphism of abelian groups such that the $j$-th projection $\pi_j:\Ht (V[0],N)^I\longrightarrow\Ht (V[0],N)$ satisfies $\pi_j\circ\gamma_{{}_{N}}=(\iota_j[0])^*(N)=\Ht(\iota_j[0],N):\Ht (V^{(I)}[0],N)\longrightarrow\Ht (V[0],N)$, for all $j\in I$. 

Our task reduces to prove that $\gamma_N$ is an isomorphism, for all $N\in\Ht$. To do that we consider the exact sequence $0\rightarrow H^{-1}(N)[1]\longrightarrow N\longrightarrow H^0(N)[0]\rightarrow 0$ in $\Ht$. Note that, by the first paragraph of the proof of implication $(1)\Longrightarrow (2)$ of last theorem, we know that $V^{(I)}[0]$ is projective in $\Ht$, for all sets $I$. This gives the following commutative diagram with exact rows:

\begin{small}
$$\xymatrix{0 \ar[r] & \Ht(V^{(I)}[0],H^{-1}(N)[1]) \ar[r] \ar[d]^{\gamma_{{}_{H^{-1}(N)[1]}}} & \Ht(V^{(I)}[0],N) \ar[r] \ar[d]^{\gamma_{{}_{N}}}& \Ht(V^{(I)}[0],H^{0}(N)[0]) \ar[r] \ar[d]^{\gamma_{{}_{H^{0}(N)[0]}}}& 0 \\ 0 \ar[r] &  \Ht(V[0],H^{-1}(N)[1])^{I} \ar[r] & \Ht(V[0],N)^{I} \ar[r] & \Ht(V[0],H^{0}(N)[0])^{I} \ar[r] & 0}$$
\end{small}

$\gamma_{{}_{H^{0}(N)[0]}}$ is clearly an isomorphism since it can be identified with the canonical map $\A(V^{(I)},H^0(N))\longrightarrow\A(V,H^0(N))^I$, which is an isomorphism by definition of the coproduct $V^{(I)}$ in $\A$. The task is further reduced to prove that $\gamma_{{}_{H^{-1}(N)[1]}}$ is an isomorphism. But this latter map gets identified with the canonical morphism $\gamma'_{{}_{F}}:Ext_\A^1(V^{(I)},F)\longrightarrow\Ext_\A^1(V,F)^I$, where $F:=H^{-1}(M)$. We just need to prove that $\gamma'_{{}_{F}}$ is an isomorphism, for all $F\in\F$. For this we use the cogenerating condition of $\T=\Gen(V)$ and, given $F\in\F$, we fix an exact sequence $0\rightarrow F\longrightarrow T\longrightarrow T'\rightarrow 0$, with $T,T'\in\T$. Bearing in mind that $\Ext_\A^1(V^{(J)},?)_{\arrowvert\T}=0$, for all sets $J$, we get the following commutative diagram with exact rows, where the two left vertical arrows are the canonical isomorphisms induced by definition of the coproduct $V^{(I)}$ in $\A$:

$$\xymatrix{ \A(V^{(I)},T) \ar[r] \ar[d] & \A(V^{(I)},T') \ar[r] \ar[d] & \Ext^{1}_{\A}(V^{(I)},F) \ar[r] \ar[d]^{\gamma'_{{}_{F}}} & 0 \\ \A(V,T)^{I} \ar[r] & \A(V,T')^{I} \ar[r] & \Ext^{1}_{\A}(V,F)^{I} \ar[r] & 0}$$

It follows that $\gamma'_{{}_{F}}$ is also an isomorphism as desired.
\end{proof}

Due to its importance, it is worth stating explicitly the dual of Theorem \ref{thm.tilting theorem}:

\begin{theorem} \label{thm.cotilting theorem}
Let $\A$ be an abelian category and let $\mathbf{t}=(\mathcal{T},\mathcal{F})$ be a torsion pair in $\A$. The following assertions are equivalent:
\begin{enumerate}
\item $\mathbf{t}$ is a cotilting torsion pair.
\item $\mathbf{t}$ is a faithful torsion pair whose heart $\Ht$ is an AB3* abelian category with an injective cogenerator. 
\item $\Ht$ is an AB3* abelian category with an injective cogenerator and $\bar{\mathbf{t}}=(\F[1],\T[0])$ is a co-faithful torsion pair in $\Ht$.  
\end{enumerate} 
In this case  $Q$ is a 1-cotilting object such that $\F=\Cogen(Q)$ if, and only if,  $Q[1]$ is an injective cogenerator of $\Ht$. Moreover, an object $E$ of $\Ht$ is an injective cogenertor of this category if, and only if, $E\cong Q[1]$ for some 1-cotilting object of $\A$ defining $\mathbf{t}$. 
\end{theorem}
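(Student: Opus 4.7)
The plan is to mirror the proof of Theorem \ref{thm.tilting theorem} by duality throughout, reversing arrows, swapping products with coproducts, and replacing $\Gen/\Pres$ by $\Cogen/\Copres$, $V[0]$ by $Q[1]$, and $H^0$ by $H^{-1}$. The equivalence $(2)\Longleftrightarrow (3)$ is immediate from Proposition \ref{prop.HRS1}, since that proposition gives a bijective correspondence between faithful torsion pairs in $\A$ and co-faithful torsion pairs in $\Ht$ via the HRS tilt. Note also that in each of (1), (2), (3) the class $\F$ is a generating class of $\A$, which is the dual role played by the cogenerating condition on $\T$ in Theorem \ref{thm.tilting theorem}.

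For $(1)\Longrightarrow (2)$, I would start with a 1-cotilting object $Q$ defining $\mathbf{t}=(\,^{\perp}Q,\Cogen(Q))$ and show that $Q[1]$ is injective in $\Ht$, i.e. that $\Ext^{1}_{\Ht}(M,Q[1])=0$ for every $M\in\Ht$. Using the canonical sequence $0\to H^{-1}(M)[1]\to M\to H^{0}(M)[0]\to 0$ in $\Ht$ from Proposition \ref{prop.HRS tilt}, the verification reduces to the cases $M=F[1]$ with $F\in\F$ and $M=T[0]$ with $T\in\T$. The first case yields $\Ext^{1}_{\Ht}(F[1],Q[1])\cong\Ext^{1}_{\A}(F,Q)=0$ since $F\in\F=\Cogen(Q)=\Ker(\Ext^{1}_{\A}(?,Q))$, while the second case gives $\Ext^{1}_{\Ht}(T[0],Q[1])\cong\D(T[0],Q[2])\cong\Ext^{2}_{\A}(T,Q)$, which vanishes because $Q$ has injective dimension at most $1$ by the dual of Corollary \ref{cor.tilting-projdim1}(2). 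The same argument applied to $Q^{I}$ shows that $Q^{I}[1]$ is injective in $\Ht$ for every set $I$. Then I would state and prove (in a dual auxiliary lemma) the analog of Lemma \ref{lem.auxiliar}: for each set $I$, the product in $\Ht$ of $I$ copies of $Q[1]$ exists and equals $Q^{I}[1]$. Combined with the fact that $\F[1]\subseteq\Ht$ is cogenerating (by Proposition \ref{prop.HRS1}), and with $\F=\Copres(Q)$ (Corollary \ref{cor.CDT}), this shows $\Ht=\Cogen_{\Ht}(Q[1])=\Copres_{\Ht}(Q[1])$. The dual of Lemma \ref{lem.coproducts-Pres(X)} then produces all products in $\Ht$, so $\Ht$ is AB3* with injective cogenerator $Q[1]$.

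For $(2)\Longrightarrow (1)$, I would invoke the injective-cogenerator part of Proposition \ref{prop.projective-generator-implies-qtilting}: the object $Q:=H^{-1}(E)$ is quasi-cotilting and $\mathbf{t}$ is the associated quasi-cotilting torsion pair, so $\F=\underline{\Cogen}(Q)\cap\Ker(\Ext^{1}_{\A}(?,Q))$. Since $\mathbf{t}$ is faithful, $\F$ is a generating class of $\A$, which forces $\underline{\Cogen}(Q)=\A$. Therefore $\F=\Ker(\Ext^{1}_{\A}(?,Q))$ and, together with $\F$ being a generating class, this exhibits $Q$ as a 1-cotilting object defining $\mathbf{t}$.

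For the final statement, one direction is already established in the proof of $(1)\Longrightarrow (2)$: if $Q$ is 1-cotilting with $\F=\Cogen(Q)$, then $Q[1]$ is an injective cogenerator of $\Ht$. Conversely, given an arbitrary injective cogenerator $E$ of $\Ht$, the object $Q:=H^{-1}(E)$ is 1-cotilting defining $\mathbf{t}$ by the argument above, so $Q[1]$ is also an injective cogenerator. Since $E$ is then injective, it must be a direct summand of a product in $\Ht$ of copies of $Q[1]$, which by the dual of Lemma \ref{lem.auxiliar} equals the stalk complex $Q^{I}[1]$; hence $H^{0}(E)=0$ and $E\cong H^{-1}(E)[1]=Q[1]$, as required. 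The main technical obstacle I anticipate is the dual of Lemma \ref{lem.auxiliar} — one must carefully verify that the canonical map $\Ht(N,Q^{I}[1])\to\Ht(N,Q[1])^{I}$ is bijective for all $N\in\Ht$, reducing via the canonical sequence for $N$ to an identification of $\Ext^{1}_{\A}(T,Q^{I})$ with $\Ext^{1}_{\A}(T,Q)^{I}$ for $T\in\T$, which in turn is handled by fixing a generating sequence $0\to T'\to F\to T\to 0$ with $F\in\F$ and exploiting $\Ext^{1}_{\A}(?,Q^{J})_{|\F}=0$ for every set $J$.
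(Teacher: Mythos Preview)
Your proposal is correct and follows exactly the paper's own approach: the paper simply states this theorem as the explicit dual of Theorem~\ref{thm.tilting theorem} and provides no separate proof, so your careful dualization of each step (including the dual of Lemma~\ref{lem.auxiliar}) is precisely what is intended. One trivial notational slip: in your sketch of the dual of Lemma~\ref{lem.auxiliar} you write the generating sequence as $0\to T'\to F\to T\to 0$ with $F\in\F$, but the kernel (your $T'$) is a subobject of $F$ and hence also lies in $\F$; calling it $F'$ would avoid confusion, since it is exactly the condition $F'\in\F$ that makes $\Ext^{1}_{\A}(F',Q^{J})=0$ and lets the five-lemma argument go through.
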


We have now the following sort of reverse consequence:

\begin{corollary} \label{cor.reverse way}
Let $\A$ be an abelian category and  let $\mathbf{t}=(\mathcal{T},\mathcal{F})$ be a torsion pair in $\A$. The following assertions hold:
\begin{enumerate}
\item $\A$ is AB3 with a projective generator and $\mathbf{t}$ is a faithful torsion pair in $\A$ if, and only if, $\bar{\mathbf{t}}=(\F[1],\T[0])$ is a tilting torsion pair in $\Ht$. In such case,  $P$ is a projective generator of $\A$ if and only if $P[1]$ is a 1-tilting object of $\Ht$ such that $\F[1]=\Gen_{\Ht}(P[1])$.
\item $\A$ is AB3* with an injective cogenerator and $\mathbf{t}$ is a co-faithful torsion pair in $\A$ if, and only if, $\bar{\mathbf{t}}=(\F[1],\T[0])$ is a cotilting torsion pair in $\Ht$.  In such case,  $E$ is an injective cogenerator of $\A$ if and only if $E[0]$ is a 1-cotilting  object of $\Ht$ such that $\T[0]=\Cogen_{\Ht}(E[0])$.
\end{enumerate}
\end{corollary}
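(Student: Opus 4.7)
The plan is to apply Theorem \ref{thm.tilting theorem} to the tilted pair $(\Ht, \bar{\mathbf{t}})$ and then translate the resulting conditions back to the original pair $(\A, \mathbf{t})$; parts (1) and (2) are dual, so I focus on (1) and treat (2) analogously by invoking Theorem \ref{thm.cotilting theorem} in place of Theorem \ref{thm.tilting theorem}.

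First I would apply Theorem \ref{thm.tilting theorem} to $(\Ht, \bar{\mathbf{t}})$: the torsion pair $\bar{\mathbf{t}} = (\F[1], \T[0])$ is tilting in $\Ht$ if and only if $\bar{\mathbf{t}}$ is co-faithful in $\Ht$ and the doubly-tilted heart $\Ha_{\bar{\mathbf{t}}}$ is AB3 with a projective generator. Next, Proposition \ref{prop.HRS1} gives that $\bar{\mathbf{t}}$ is co-faithful in $\Ht$ if and only if $\mathbf{t}$ is faithful in $\A$. Finally, I would use Proposition \ref{prop. Equiva A[1]Ht} to identify the doubly-tilted heart $\Ha_{\bar{\mathbf{t}}}$ with $\A[1]$ via the realization functor $G:\D^b(\Ht)\to\D^b(\A)$, yielding an equivalence of abelian categories $\A \simeq \Ha_{\bar{\mathbf{t}}}$, which transfers the ``AB3 with a projective generator'' condition between the two categories and completes the proof of the main equivalence.

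For the ``moreover'' statement, I would combine the equivalence $\A \simeq \Ha_{\bar{\mathbf{t}}}$ with the final statement of Theorem \ref{thm.tilting theorem} applied to $(\Ht, \bar{\mathbf{t}})$: that statement tells us that the projective generators of $\Ha_{\bar{\mathbf{t}}}$ are exactly the stalk complexes $V[0]$ in $\D(\Ht)$ associated to $1$-tilting objects $V$ of $\Ht$ satisfying $\bar{\T} = \F[1] = \Gen_{\Ht}(V)$. Under the equivalence $\Ha_{\bar{\mathbf{t}}} \simeq \A[1]$, an object $P$ of $\A$ corresponds to the stalk complex $(P[1])[0]$; hence $P$ is a projective generator of $\A$ if and only if $V := P[1]$ is such a $1$-tilting object of $\Ht$, which is the desired conclusion.

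The main obstacle will be invoking the identification $\Ha_{\bar{\mathbf{t}}} \simeq \A[1]$ in both directions of the first equivalence: Proposition \ref{prop. Equiva A[1]Ht} is stated under the hypothesis that $\T$ is cogenerating in $\A$ (i.e., $\mathbf{t}$ is co-faithful), whereas our hypothesis on $\mathbf{t}$ is only that it is faithful. In the forward direction, $\A$ has enough projectives because it has a projective generator, and one can appeal to the ``in particular'' clause of that proposition. In the reverse direction, however, one only knows a priori that $\Ha_{\bar{\mathbf{t}}}$ has enough projectives, and one needs either a dual form of Proposition \ref{prop. Equiva A[1]Ht}, obtained by passing to opposite categories, or a direct analysis of the realization functor $G$ on $\Ha_{\bar{\mathbf{t}}}$ to produce the required equivalence.
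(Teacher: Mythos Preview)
Your approach is essentially the same as the paper's: apply the tilting/cotilting theorem to the tilted pair $(\Ht,\bar{\mathbf{t}})$, translate the (co)faithfulness via Proposition~\ref{prop.HRS1}, and identify the doubly-tilted heart $\Ha_{\bar{\mathbf{t}}}$ with $\A[1]$. The only notable difference is that the paper proves assertion~(2) directly and obtains (1) by duality, whereas you attack (1) head-on; the paper's choice is the more convenient one precisely because the standing hypothesis ``$\T$ cogenerating'' in Proposition~\ref{prop. Equiva A[1]Ht} is exactly the co-faithful condition appearing in assertion~(2), so no dual version is needed there. The subtlety you flag about invoking Proposition~\ref{prop. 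Equiva A[1]Ht} in the reverse direction is real; the paper sidesteps it by citing \cite[Proposition I.3.4 and Theorem 3.3]{HRS} directly for the identification $\Phi[(\Ht,\bar{\mathbf{t}})]=(\A[1],\mathbf{t}[1])$ and the derived equivalence $\D^b(\A)\simeq\D^b(\Ht)$, rather than going through the paper's own Proposition~\ref{prop. Equiva A[1]Ht}.
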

\begin{proof}
Obviously, each assertion is obtained from the other one by duality. We just prove assertion 2. By \cite[Proposition 3.2]{HRS} we know that $\T[0]$ is a generating class in $\Ht$, and, by \cite[Proposition I.3.4]{HRS} and using the terminology of that article, we have that $\Phi [(\Ht,\bar{\mathbf{t}})]$ is equivalent to $(\A,\mathbf{t})$, in fact it is equal to $(\A[1],\mathbf{t}[1])$. Moreover, by \cite[Theorem 3.3]{HRS} we even know that $\D^b(\A)$ and $\D^b(\Ht)$ are equivalent  triangulated categories. This allows us to apply Theorem \ref{thm.cotilting theorem},  replacing $\A$ by $\Ht$ and $\mathbf{t}$ by $\bar{\mathbf{t}}$ in that theorem, to conclude that $\bar{\mathbf{t}}$ is a cotilting torsion pair in $\Ht$. The last statement is also a consequence of Theorem \ref{thm.cotilting theorem}. 
\end{proof}

\subsection{Hearts that are module categories}
 
  In order to study  those hearts which are module categories,  we need the following concepts:

\begin{definition} \label{def.tilting-set-selfsmall} \normalfont
Let $\A$ be an abelian category and $\T_0$ be a set of objects such that arbitrary coproducts of objects of $\T_0$ exist in $\A$. We shall say that $\T_0$ is:

\begin{enumerate}
\item a \emph{1-tilting set} when $\coprod_{T\in\T_0}T$ is a 1-tilting object;
\item a \emph{self-small set} when, for each $T\in\T_0$ and each family $(T_\lambda)_{\lambda\in\Lambda}$ in $\T_0$, the canonical map $\coprod_{\lambda\in\Lambda}\A(T,T_\lambda )\longrightarrow\A(T,\coprod_{\lambda\in\Lambda}T_\lambda)$ is an isomorphism.
\item  a \emph{classical 1-tilting set} when it is 1-tilting and self-small.
\end{enumerate}

When $\T_0=\{T\}$ we say that $T$ is, respectively, a 1-tilting, a self-small and a classical 1-tilting object. 
\end{definition}

The following is the version that we will need of a theorem of Gabriel and Mitchell (see \cite[Corollary 3.6.4]{Po}):

\begin{proposition} \label{prop.Gabriel-Mitchell}
Let $\A$ be any category. The following assertions are equivalent:

\begin{enumerate}
\item $\A$ is equivalent to  $\text{Mod}-\mathcal{B}$ (resp. $\text{Mod}-R$), for some small pre-additive category $\mathcal{B}$ (resp. some ring $R$);
\item $\A$ is an AB3 abelian category that admits a self-small set of projective generators (resp. a self-small projective generator).
\end{enumerate}
\end{proposition}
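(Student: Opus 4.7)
The direction $(1)\Longrightarrow(2)$ is the easy one. For $\A=\text{Mod}-\mathcal{B}$ (resp.\ $\text{Mod}-R$), the representable functors $\{\mathcal{B}(-,B)\text{: }B\in\mathcal{B}\}$ (resp.\ the rank-one free module $R$) form a self-small set of projective generators: projectivity and the generating property are standard, and self-smallness is Yoneda's lemma, since $\text{Hom}(\mathcal{B}(-,B),\coprod_\lambda F_\lambda)\cong (\coprod_\lambda F_\lambda)(B)=\coprod_\lambda F_\lambda(B)\cong\coprod_\lambda\text{Hom}(\mathcal{B}(-,B),F_\lambda)$, the coproduct in $\text{Mod}-\mathcal{B}$ being objectwise. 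AB3-ness is automatic.

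For the hard direction $(2)\Longrightarrow(1)$, let $\mathcal{P}$ be a self-small set of projective generators of $\A$, and let $\mathcal{B}$ be the full subcategory of $\A$ with objects $\mathcal{P}$, viewed as a small preadditive category. I would define the functor $F\colon\A\longrightarrow\text{Mod}-\mathcal{B}$ by $F(A):=\A(-,A)_{| \mathcal{B}}$. The plan is to show $F$ is an equivalence by verifying it is exact, coproduct-preserving on $\mathcal{P}$, faithful, full and essentially surjective, in that order.

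Exactness follows from the projectivity of each $P\in\mathcal{P}$, since then $\A(P,-)$ is exact. The self-smallness hypothesis gives the canonical isomorphism $F(\coprod_{\lambda}P_\lambda)\cong\coprod_\lambda F(P_\lambda)$ for each family $(P_\lambda)$ in $\mathcal{P}$, so $F$ matches arbitrary coproducts of objects of $\mathcal{P}$ with the corresponding coproducts of representables in $\text{Mod}-\mathcal{B}$. Faithfulness is immediate: if $F(f)=0$ then $\A(P,f)=0$ for all $P\in\mathcal{P}$, and as $\mathcal{P}$ is a generating class this forces $f=0$ on any epimorphic image from a coproduct of objects of $\mathcal{P}$ onto the domain of $f$.

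For essential surjectivity and fullness, given a right $\mathcal{B}$-module $M$, take a free presentation $\mathcal{B}(-,P_1)^{(J)}\stackrel{u}{\longrightarrow}\mathcal{B}(-,P_0)^{(I)}\longrightarrow M\rightarrow 0$ in $\text{Mod}-\mathcal{B}$. By Yoneda and the self-small identification of the previous paragraph, $u=F(\tilde u)$ for a unique morphism $\tilde u\colon P_1^{(J)}\longrightarrow P_0^{(I)}$ in $\A$ (these coproducts exist by AB3). Setting $A:=\text{Coker}(\tilde u)$ and using exactness of $F$, we obtain $F(A)\cong M$. Fullness is obtained by the same trick: given $g\colon F(A)\longrightarrow F(A')$ and projective presentations $P_1^{(J)}\to P_0^{(I)}\epi A$ (with $P_i\in\Add(\mathcal{P})$, built using that $\mathcal{P}$ generates and each $P\in\mathcal{P}$ is projective), the composition $F(P_0^{(I)})\twoheadrightarrow F(A)\stackrel{g}{\longrightarrow}F(A')$ lifts, through the corresponding sum of representables, to a morphism in $\A$ whose composite with $\tilde u$ is zero, producing a preimage $\tilde g\colon A\longrightarrow A'$ with $F(\tilde g)=g$. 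The single-object version gives the module category case with $R=\text{End}_\A(P)^{\text{op}}$.

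The step I expect to be most delicate is the coproduct-preservation argument: one has to ensure that self-smallness, stated object by object, combines with AB3 to give $F$-coproduct compatibility on the whole of $\Add(\mathcal{P})$, since this is what bridges projective presentations in $\A$ and free presentations in $\text{Mod}-\mathcal{B}$; once this bridge is secured, fullness and essential surjectivity are routine diagram chases.
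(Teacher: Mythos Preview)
Your proof is correct, but it follows a genuinely different route from the paper's. The paper does not construct the restricted Yoneda equivalence $F=\A(-,?)_{|\mathcal{B}}$ directly; instead it invokes the classical Gabriel--Mitchell theorem (stated for a set of \emph{compact} projective generators, as in \cite[Corollary 3.6.4]{Po}) and reduces the present statement to that one by proving the single lemma: in an AB3 abelian category, a self-small set $\mathcal{P}_0$ of projective generators consists of compact objects. The argument is short: for any family $(A_\lambda)$, choose presentations $\coprod_{P}P^{(I_{P,\lambda})}\to\coprod_{P}P^{(J_{P,\lambda})}\to A_\lambda\to 0$, take their coproduct over $\lambda$, and apply $\A(P',-)$ for $P'\in\mathcal{P}_0$; projectivity plus self-smallness then force the canonical map $\coprod_\lambda\A(P',A_\lambda)\to\A(P',\coprod_\lambda A_\lambda)$ to be an isomorphism.

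What each approach buys: the paper's reduction is much shorter on the page but outsources the substance to the cited classical result; your argument is self-contained and makes transparent exactly where each hypothesis is used. Your worry about extending coproduct-preservation to $\Add(\mathcal{P})$ is slightly misplaced: for fullness you only need that $\Hom_{\A}(\coprod_i P_i,X)\cong\Hom_{\text{Mod}-\mathcal{B}}(\coprod_i\mathcal{B}(-,P_i),F(X))$, and this follows from Yoneda and the universal property of coproducts (on both sides) without any further appeal to self-smallness---the latter is used only to identify $F(\coprod_iP_i)$ with $\coprod_iF(P_i)$, which you already have. One cosmetic point: your free presentation should be written with varying representables $\coprod_j\mathcal{B}(-,P_{1,j})\to\coprod_i\mathcal{B}(-,P_{0,i})$ rather than $\mathcal{B}(-,P_1)^{(J)}\to\mathcal{B}(-,P_0)^{(I)}$, since $\mathcal{P}$ need not be a singleton.
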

\begin{proof}
The equivalence for $\text{Mod}-R$ is a particular case of the one for $\text{Mod}-\mathcal{B}$, for $\mathcal{B}$ a small pre-additive category, since a ring is the same as a pre-additive category with just one object. 
The classical version of Gabriel-Mitchell theorem states that assertion 1 holds if, and only if, $\A$ is AB3 and has a set  of small(=compact) projective generators (see, e.g., \cite[Corollary 3.6.4]{Po}). We just need to check that in any AB3 abelian category, if $\mathcal{P}_0$ is a self-small set of projective generators, then $\mathcal{P}_0$ consists of small objects. Indeed, let $(A_\lambda)_{\lambda\in\Lambda}$ be any family of objects in $\A$. For each $\lambda\in\Lambda$, we then have an exact sequence $\coprod_{P\in\mathcal{P}_0}P^{(I_{P,\lambda})}\stackrel{f_\lambda}{\longrightarrow}\coprod_{P\in\mathcal{P}_0}P^{(J_{P,\lambda})}\stackrel{p_{\lambda}}{\longrightarrow}A_\lambda\rightarrow 0$ in $\A$, for some sets $I_{P,\lambda}$ and $J_{P,\lambda}$. Due to right exactness of coproducts, we then get an exact sequence $$\coprod_{\lambda\in\Lambda}\coprod_{P\in\mathcal{P}_0}P^{(I_{P,\lambda})}\stackrel{\coprod f_\lambda}{\longrightarrow}\coprod_{\lambda\in\Lambda}\coprod_{P\in\mathcal{P}_0}P^{(J_{P,\lambda})}\stackrel{\coprod p_\lambda}{\longrightarrow}\coprod_{\lambda\in\Lambda}A_\lambda\rightarrow 0.$$

If now $P'\in\mathcal{P}_0$ is arbitrary and we apply $\A(P',?)$ to the last exact sequence, using the projectivity of $P'$ and the self-smallness of $\mathcal{P}_0$ we readily get that the canonical map $\coprod_{\lambda\in\Lambda}\A(P',A_\lambda)\longrightarrow\A(P',\coprod_{\lambda\in\Lambda}A_\lambda)$ is an isomorphism, so that $P'$ is small (=compact) in $\A$. 
\end{proof}

\begin{corollary} \label{cor.module-category}
Let $\A$ be an abelian category and let $\mathbf{t}=(\T,\F)$ be a torsion pair in $\A$. The following assertions are equivalent:

\begin{enumerate}
\item There is a classical 1-tilting set $\T_0$ (resp. a classical 1-tilting object $V$) such that $\T=\Gen(\T_0)$ (resp. $\T=\Gen(V)$).
\item $\mathbf{t}$ is a co-faithful torsion pair whose heart $\Ht$ is equivalent to the module category over a small pre-additive category (resp. over a ring). 
\item   $\Ht$ is equivalent to the module category over a small pre-additive category (resp. over a ring) and $\bar{\mathbf{t}}=(\F[1],\T[0])$ is a faithful  torsion pair in $\Ht$. 
\end{enumerate}
\end{corollary}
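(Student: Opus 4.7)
The plan is to combine Theorem \ref{thm.tilting theorem} with the Gabriel--Mitchell characterization of module categories in Proposition \ref{prop.Gabriel-Mitchell}, using a set-indexed variant of Lemma \ref{lem.auxiliar} to move self-smallness back and forth between $\A$ and $\Ht$. The equivalence $(2)\Longleftrightarrow (3)$ is immediate from Proposition \ref{prop.HRS1}, since any module category is AB3 with a projective generator, so the only content of these two assertions beyond that of $\mathbf{t}$ being co-faithful (equiv. $\bar{\mathbf{t}}$ being faithful) lives in the module-category condition itself.

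For $(1)\Longrightarrow (2)$, first set $V:=\coprod_{T\in\T_0}T$, which is a 1-tilting object of $\A$ with $\Gen(V)=\T$. Theorem \ref{thm.tilting theorem} then yields that $\mathbf{t}$ is co-faithful and that $\Ht$ is AB3 with projective generator $V[0]$. A direct extension of Lemma \ref{lem.auxiliar} shows that $V[0]$ is the coproduct in $\Ht$ of the family $\{T[0]:T\in\T_0\}$, whence each $T[0]$ is projective in $\Ht$ (being a summand of $V[0]$) and the set $\{T[0]:T\in\T_0\}$ is a set of projective generators of $\Ht$. Self-smallness then transfers: for $T,T_\lambda\in\T_0$, one chains $\Ht(T[0],\coprod_{\Ht}T_\lambda[0])\cong\Ht(T[0],(\coprod_\A T_\lambda)[0])\cong\A(T,\coprod_\A T_\lambda)\cong\coprod\A(T,T_\lambda)\cong\coprod\Ht(T[0],T_\lambda[0])$, using self-smallness of $\T_0$ in $\A$. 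Proposition \ref{prop.Gabriel-Mitchell} then identifies $\Ht$ with $\text{Mod}\text{-}\mathcal{B}$; the parenthetical ring version follows from the single-object case of the same proposition applied to the single-object input $V$.

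For $(2)\Longrightarrow (1)$, assume $\Ht$ is a module category. By Proposition \ref{prop.Gabriel-Mitchell}, pick a self-small set $\mathcal{P}_0$ of projective generators of $\Ht$. Co-faithfulness of $\mathbf{t}$ together with $\Ht$ being AB3 with a projective generator forces, by Theorem \ref{thm.tilting theorem}, that $\mathbf{t}$ is tilting; fix a 1-tilting object $V$ of $\A$ with $\T=\Gen(V)$ and $V[0]$ a projective generator of $\Ht$. Each $P\in\mathcal{P}_0$ is a direct summand of some coproduct in $\Ht$ of copies of $V[0]$, which by Lemma \ref{lem.auxiliar} equals $V^{(I)}[0]$. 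Applying $H^{-1}$ gives $H^{-1}(P)=0$, so $P\cong T_P[0]$ with $T_P\in\Add(V)\subseteq\T$. Setting $\T_0:=\{T_P:P\in\mathcal{P}_0\}$, the same coproduct-compatibility used above shows that $\T_0$ is self-small in $\A$ and that $W:=\coprod_{T\in\T_0}T$ satisfies $W[0]\cong\coprod_\Ht P$, which is a projective generator of $\Ht$. The last statement of Theorem \ref{thm.tilting theorem} then identifies $W$ as a 1-tilting object with $\Gen(W)=\T$, so $\T_0$ is a classical 1-tilting set with $\T=\Gen(\T_0)$. Again, the ring version follows from taking $\mathcal{P}_0$ to be a singleton in the reverse direction.

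The main obstacle lies in the two-sided transfer between stalk complexes and their incarnations in $\Ht$: on one side, that a coproduct in $\Ht$ of stalk complexes $T[0]$ with $T\in\T$ is itself the stalk complex $(\coprod_\A T)[0]$ (so that projectivity, generation and self-smallness pass from $\A$ to $\Ht$); and on the other, that every projective object of $\Ht$ arising from the Gabriel--Mitchell argument is isomorphic to some $T[0]$ with $T\in\T$ (so that the self-small data in $\Ht$ descends to a set in $\A$). Both are variants or consequences of Lemma \ref{lem.auxiliar}, and both ultimately rest on the vanishing of $H^{-1}$ on summands of stalk complexes, together with the projectivity of $V^{(I)}[0]$ in $\Ht$ established inside the proof of Theorem \ref{thm.tilting theorem}.
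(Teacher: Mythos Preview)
Your proof is correct and follows essentially the same route as the paper: both directions hinge on Theorem \ref{thm.tilting theorem}, the Gabriel--Mitchell criterion of Proposition \ref{prop.Gabriel-Mitchell}, and the variant of Lemma \ref{lem.auxiliar} identifying the coproduct in $\Ht$ of stalks $T_\lambda[0]$ (for $T_\lambda\in\T_0$) with $(\coprod_\A T_\lambda)[0]$, which is what makes self-smallness transfer between $\A$ and $\Ht$. The only cosmetic difference is in $(2)\Rightarrow(1)$: the paper observes directly that, since $\T[0]$ is generating in $\Ht$, every projective of $\Ht$ already lies in $\T[0]$ (as $\T[0]$ is closed under summands, being the class $\{M\in\Ht:H^{-1}(M)=0\}$), whereas you reach the same conclusion by first fixing a 1-tilting $V$ and arguing via $P\mid V^{(I)}[0]$; both arguments are equivalent and equally short.
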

\begin{proof}

$(2)\Longleftrightarrow (3)$ is a consequence of Proposition \ref{prop.HRS1}.

$(1)\Longrightarrow (2)$ Since $V:=\coprod_{T\in\T_0}T$ is a 1-tilting object it follows from Theorem \ref{thm.tilting theorem} that $V[0]$ is a projective generator of $\Ht$, which in turns implies that $\T_0[0]=\{T[0]\text{: }T\in\T_0\}$ is a set of projective generators of $\Ht$. An easy adaptation of the proof of Lemma \ref{lem.auxiliar} shows that if $(T_\lambda )_{\lambda\in\Lambda}$ is a family in $\T_0$, then the coproduct of the $T_\lambda [0]$ in $\Ht$ exists and is the stalk complex $(\coprod_{\lambda\in\Lambda}T_\lambda) [0]$. Here the  $\mu$-th injection into the coproduct, for each $\mu\in\Lambda$, is the map $\iota_\mu [0]:T_\mu[0]\longrightarrow (\coprod_{\lambda\in\Lambda}T_\lambda)[0]$, where $\iota_\mu :T_\mu\longrightarrow\coprod_{\lambda\in\Lambda}T_\lambda$ is the $\mu$-th injection into the coproduct in $\mathcal{A}$. Given now $T\in\T_0$ arbitrary,  we have a sequence of isomorphisms:  $$\coprod_{\lambda\in\Lambda}\Ht (T[0],T_\lambda [0])\cong\coprod_{\lambda\in\Lambda}\A(T,T_\lambda )\stackrel{canonical}{\longrightarrow}\A(T,\coprod_{\lambda\in\Lambda}T_\lambda )\cong\Ht (T[0],(\coprod_{\lambda\in\Lambda}T_\lambda)[0]).$$ We claim that the composition of these isomorphisms, denoted by $\psi$ in the sequel,  is precisely the canonical morphism $$\coprod_{\lambda\in\Lambda}\Ht (T[0],T_\lambda [0])\longrightarrow\Ht (T[0],\coprod^{\Ht}_{\lambda\in\Lambda}(T_\lambda[0]))=\Ht (T[0],(\coprod_{\lambda\in\Lambda}T_\lambda)[0]). \hspace*{1cm} (\star )$$ To see this, for each $\mu\in\Lambda$ we let $u_\mu :\Ht (T[0],T_\mu[0])\longrightarrow\coprod_{\lambda\in\Lambda}\Ht (T[0],T_\lambda [0])$ the $\mu$-th injection into the coproduct in $\text{Ab}$.  We need to check that $\psi\circ u_\mu =(\iota_\mu[0])_*=\Ht (T[0],\iota_\mu [0])$, for all $\mu\in\Lambda$.  But this follows immediately from the equivalence of categories $\T\cong\T[0]$ and the fact that the composition $$ \mathcal{A}(T,T_\mu)\stackrel{\tilde{u}_\mu}{\longrightarrow}\coprod_{\lambda\in\Lambda}\A(T,T_\lambda)\stackrel{canonical}{\longrightarrow}\A(T,\coprod_{\lambda\in\Lambda}T_\lambda),$$ where $\tilde{u}_\mu$ is the canonical injection into the coproduct, is precisely the morphism $(\iota_\mu)_*=\A(T,\iota_\mu )$. 

  Therefore  $\T_0[0]$ is a self-small set of projective generators of $\Ht$ and, by Proposition \ref{prop.Gabriel-Mitchell}, we conclude that $\Ht\cong\text{Mod}-\mathcal{B}$, for some small pre-additive category $\mathcal{B}$. 

$(2)\Longleftrightarrow(3)\Longrightarrow (1)$ Due to the co-faithful condition on $\mathbf{t}$ and Proposition \ref{prop.HRS1}, the class $\T[0]$ is generating in $\Ht$. Hence any projective object of $\mathcal{H}_\mathbf{t}$ is in $\mathcal{T}[0]$. Then any self-small set of projective generators of $\mathcal{H}_\mathbf{t}$ is of the form $\mathcal{T}_0[0]$, for some set $\mathcal{T}_0\subset\mathcal{T}$. By Theorem \ref{thm.tilting theorem} and its proof, we get that $\mathbf{t}$ is the tilting torsion pair defined by the 1-tilting object $\hat{T}:=\coprod_{T\in\mathcal{T}_0}T$. It just remains to check that $\mathcal{T}_0$ is a self-small set. But this is a direct consequence of the self-smallness of $\mathcal{T}_0[0]$ since we have an equivalence of categories $\mathcal{T}\cong\mathcal{T}[0]$ and coproducts in $\mathcal{T}$ and $\mathcal{T}[0]$ are calculated as in $\mathcal{A}$ and $\mathcal{H}_\mathbf{t}$, respectively.
\end{proof} 

  \subsection{Bijections induced by the HRS tilting process}
  
  The previous results and the HRS tilting process give rise to a nice series of bijections that we gather in our next corollary.  We continue with the terminology of Definition \ref{def.HRS tilting process} and Proposition \ref{prop.HRS1}. They give  induced maps $(\mathbf{AB},\mathbf{tor}_{faithful})\stackrel{\longrightarrow}\longleftarrow(\mathbf{AB},\mathbf{tor}_{cofaithful})$. By Theorems \ref{thm.tilting theorem} and \ref{thm.cotilting theorem} and Corollary \ref{cor.reverse way}, we get the bijections in 1 and 2 of next corollary, and its bijections  $3$ and $4$   follow from Corollary \ref{cor.module-category}.

\begin{corollary} \label{cor.bijections-from-HRStilt}
Let  $\Phi:(\mathbf{AB},\mathbf{tor})\longrightarrow (\mathbf{AB},\mathbf{tor})$ be the map induced by the HRS tilting process (see Definition \ref{def.HRS tilting process}).
By restriction, $\Phi$ defines bijections, which are inverse of themselves (i.e. $(\Phi\circ\Phi)_{| \mathcal{C}}=1_\mathcal{C}$, for $\mathcal{C}$ any subclass in the list):

\begin{enumerate}
\item Between $(\mathbf{AB},\mathbf{tor}_{tilt})$ and $(\mathbf{AB3}_{proj},\mathbf{tor}_{faithful})$, where $\mathbf{tor}_{tilt}$ and $\mathbf{tor}_{faithful}$ denote the subclasses of $\mathbf{tor}$ consisting of the tilting and the faithful torsion pairs, respectively,   and $\mathbf{AB3}_{proj}$ denotes the class of AB3 abelian categories with a projective generator;
\item Between $(\mathbf{AB},\mathbf{tor}_{cotilt})$ and $(\mathbf{AB3}^*_{inj},\mathbf{tor}_{cofaithful})$, where $\mathbf{tor}_{cotilt}$  and $\mathbf{tor}_{cofaithful}$ denote the subclasses of $\mathbf{tor}$ consisting of the cotilting and the co-faithful torsion pairs, respectively, and  $\mathbf{AB3}^*_{inj}$ denotes the class of AB3* abelian categories with an injective cogenerator;
\item Between $(\mathbf{AB},\mathbf{tor}_{stilt-class})$ and $(\mathbf{Mod}_{paddt},\mathbf{tor}_{faithful})$, where $\mathbf{tor}_{stilt-class}$ denotes the subclass of $\mathbf{tor}_{tilt}$ consisting of those torsion pairs associated to a classical tilting set of objects and $\mathbf{Mod}_{paddt}$ is the class of abelian categories which are equivalent to module categories over small pre-additive categories. 
\item Between $(\mathbf{AB},\mathbf{tor}_{class-tilt})$ and $(\mathbf{Mod}_{ring},\mathbf{tor}_{faithful})$, where $\mathbf{tor}_{class-tilt}$ denotes the subclass of $\mathbf{tor}_{tilt}$ consisting of the torsion pairs associated to classical tilting objects and $\mathbf{Mod}_{ring}$ denotes the class of categories equivalent to module categories over rings. 
\end{enumerate}
\end{corollary}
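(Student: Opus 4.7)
The plan is to assemble the four bijections from results already proved earlier in the paper, with Proposition \ref{prop. Equiva A[1]Ht} providing the double-tilt inversion $\Phi^2\cong\mathrm{id}$ in every case. For each of the four claimed bijections I need to verify: (a) $\Phi$ carries the left-hand class into the right-hand class; (b) $\Phi$ carries the right-hand class into the left-hand class; (c) the composition $\Phi\circ\Phi$ returns to the original pair up to equivalence, so that $(\Phi\circ\Phi)_{|\mathcal{C}}=1_\mathcal{C}$.

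For bijection 1, step (a) is the content of Theorem \ref{thm.tilting theorem} together with Proposition \ref{prop.HRS1}: a tilting torsion pair is automatically co-faithful, its heart $\Ht$ is AB3 with projective generator $V[0]$, and the tilted pair $\bar{\mathbf{t}}$ is faithful. Step (b) is exactly Corollary \ref{cor.reverse way}(1), which says that a faithful torsion pair in an AB3 category with a projective generator tilts to a tilting torsion pair. For step (c), on the tilting side the torsion class $\T$ is cogenerating and $\Ht$ has enough projectives (the projective generator), while on the AB3-with-projective-generator side $\A$ itself has enough projectives; in both cases the hypothesis of Proposition \ref{prop. Equiva A[1]Ht} is met. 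Bijection 2 is the exact dual argument, using Theorem \ref{thm.cotilting theorem} and Corollary \ref{cor.reverse way}(2), transposing projective/injective, AB3/AB3*, tilting/cotilting and faithful/co-faithful throughout.

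Bijections 3 and 4 are then refinements of bijection 1 obtained by intersecting both sides with an additional sub-characterisation given by Corollary \ref{cor.module-category}: a co-faithful torsion pair has heart equivalent to a module category over a small pre-additive category (respectively a ring) if and only if its torsion class equals $\Gen(\T_0)$ for a classical $1$-tilting set $\T_0$ (respectively $\Gen(V)$ for a classical $1$-tilting object $V$). Restricting the bijection of 1 to these subclasses on each side yields 3 and 4 immediately, and inversion again comes from Proposition \ref{prop. Equiva A[1]Ht}.

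The main subtlety I expect is the set-theoretic book-keeping: each of the four classes is a proper class of pairs $(\A,\mathbf{t})$, so the statements ``bijection'' and $(\Phi\circ\Phi)_{|\mathcal{C}}=1_\mathcal{C}$ must be interpreted up to the natural equivalence $\Phi^2[(\A,\mathbf{t})]\cong(\A,\mathbf{t})$ supplied by Proposition \ref{prop. Equiva A[1]Ht}, rather than as literal identity of pairs. Once this convention is made explicit, the proof of the corollary is a direct patchwork of the previously cited results, with no new computation required beyond checking, at each transition, that the hypotheses of the invoked theorem (in particular the cogenerating/generating and ``enough (co)projectives'' conditions) are satisfied.
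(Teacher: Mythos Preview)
Your proof is correct and follows essentially the same route as the paper: the paper's own proof is just the short paragraph preceding the corollary, which cites Theorems \ref{thm.tilting theorem}, \ref{thm.cotilting theorem} and Corollary \ref{cor.reverse way} for bijections 1 and 2, and Corollary \ref{cor.module-category} for bijections 3 and 4 --- exactly the ingredients you invoke, with your write-up making the roles of each direction and of the double-tilt inversion more explicit.

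One small imprecision worth flagging: when you appeal to Proposition \ref{prop. Equiva A[1]Ht} on the $(\mathbf{AB3}_{proj},\mathbf{tor}_{faithful})$ side, note that the proposition as stated has the standing hypothesis that $\T$ is \emph{cogenerating}, whereas for a faithful torsion pair it is $\F$ that is generating. What you actually need there is the dual statement (with $\F$ generating and either $\A$ having enough projectives or $\Ht$ having enough injectives), which is immediate by duality but is not the literal statement of Proposition \ref{prop. Equiva A[1]Ht}. The paper sidesteps this by absorbing the inversion into the proof of Corollary \ref{cor.reverse way}, where the relevant identification $\Phi[(\Ht,\bar{\mathbf{t}})]\cong(\A,\mathbf{t})$ is established directly via \cite[Proposition I.3.4]{HRS}.
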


Positselski and Stovicek have recently shown that complete and cocomplete abelian categories with an injective cogenerator and an $n$-tilting object correspond bijectively to complete and cocomplete abelian categories with a projective generator and an $n$-cotilting object \cite[Corollary 4.12]{Po-St2}. One can now recover the case $n=1$ in their result.


\begin{corollary}[Positselski-Stovicek] \label{cor.Po-St-tilting-cotilting-corresp}
The HRS tilting process gives a one-to-one correspondence between:
\begin{enumerate}
\item The pairs $(\A ,\mathbf{t})$ consisting of an AB3*  abelian category $\A$ with an injective cogenerator and a tilting torsion pair $\mathbf{t}$ in $\A$;
\item The pairs $(\mathcal{B},\bar{\mathbf{t}})$ consisting of an AB3  abelian category $\mathcal{B}$ with a projective generator and a cotilting torsion pair $\bar{\mathbf{t}}$ in $\mathcal{B}$
\end{enumerate}
Moreover the categories of assertion 1 are also AB4 and those of assertion 2 are also AB4*.  
\end{corollary}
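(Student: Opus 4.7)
The plan is to derive the bijection directly from the machinery already developed in the section, and to address the AB4 and AB4* assertions as a separate, more delicate step. I would start from Corollary \ref{cor.bijections-from-HRStilt}(1), which already gives a bijection $\Phi\colon (\mathbf{AB},\mathbf{tor}_{tilt})\longleftrightarrow(\mathbf{AB3}_{proj},\mathbf{tor}_{faithful})$; the task then reduces to identifying, on each side, which pairs correspond to the extra hypothesis imposed on the other side. By Corollary \ref{cor.reverse way}(2), the tilted pair $\bar{\mathbf{t}}$ is cotilting in $\Ht$ precisely when $\A$ is AB3* with an injective cogenerator and $\mathbf{t}$ is co-faithful. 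Since a tilting torsion pair is automatically co-faithful (its torsion class $\Gen(V)$ being cogenerating), and a cotilting one is automatically faithful (the torsionfree class $\Cogen(Q)$ of a 1-cotilting $Q$ being generating), the restriction of $\Phi$ sends class (1) bijectively onto class (2).

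Next I would verify that this restricted $\Phi$ is its own inverse. Class (1) categories have enough injectives (being AB3* with an injective cogenerator) and class (2) categories have enough projectives, so Proposition \ref{prop. Equiva A[1]Ht} applies on both sides and yields $\Phi^2\simeq\mathrm{id}$. Combined with the symmetric application of Theorem \ref{thm.cotilting theorem} and Corollary \ref{cor.reverse way}(1) starting from class (2), this establishes the one-to-one correspondence.

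The main obstacle is the supplementary claim that class (1) categories are AB4 and class (2) categories are AB4*, because it goes beyond the pure bijection: one must produce all set-indexed coproducts in a category that is \emph{a priori} only AB3* (and dually), and show they are exact. My plan is to construct coproducts in $\A$ in two pieces, using the torsion decomposition $0\to t(A)\to A\to (1\!:\!t)(A)\to 0$ of each object. For families in $\T=\Pres(V)$, coproducts are produced by Lemmas \ref{lem.coproducts-Pres(X)} and \ref{lem.auxiliar}; for families $(F_i)$ in $\F$, one forms $\coprod^{\Ht}F_i[1]$ in $\Ht$ (which exists since $\Ht$ is AB3), observes via Lemma \ref{lem-preservation of (co)products by class-homology}(1) that its $H^0$ vanishes, and extracts a candidate object from the resulting torsion sequence in $\Ht$. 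The genuine crux will be to show that these partial constructions glue functorially into a bona fide coproduct on all of $\A$, and that exactness of the resulting coproduct functor can be forced using the projectivity of the stalk complexes $V^{(I)}[0]$ in $\Ht$ established in the proof of Theorem \ref{thm.tilting theorem}. The AB4* claim for class (2) is then obtained by the exact dual argument.
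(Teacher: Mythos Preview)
Your treatment of the bijection is correct and essentially matches the paper's: the paper intersects the two bijections of Corollary~\ref{cor.bijections-from-HRStilt}(1) and~(2), while you use bijection~(1) together with Corollary~\ref{cor.reverse way}(2), but these amount to the same thing.

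The real divergence is in the AB4/AB4* claim, and here your plan is both much harder than necessary and genuinely incomplete. The paper's argument is a one-liner that does not touch the tilting structure at all: an AB3* abelian category $\A$ with an injective cogenerator is automatically AB3, because the constant-diagram functor $\kappa\colon\A\to\A^I$ is continuous and Freyd's special adjoint functor theorem applies (complete, well-powered, cogenerating set), so $\kappa$ has a left adjoint, namely the coproduct. Once $\A$ is AB3 with an injective cogenerator, it is AB4 by the standard fact \cite[Corollary~3.2.9]{Po}. Dually for class~(2).

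By contrast, your proposed construction has two weak points. First, for a family $(F_i)$ in $\F$, forming $\coprod^{\Ht}F_i[1]=F[1]$ only gives you the universal property against objects of $\Ht$; there is no full embedding $\A\hookrightarrow\Ht$ through which to test that $F$ is the coproduct of the $F_i$ in $\A$, and the adjunctions of Lemma~\ref{lem-preservation of (co)products by class-homology} go the wrong way for this. Second, even granting coproducts separately in $\T$ and in $\F$, the ``gluing'' you flag as the crux is a real obstacle: the torsion sequence $0\to t(A_i)\to A_i\to(1{:}t)(A_i)\to 0$ does not split, so knowing $\coprod t(A_i)$ and $\coprod(1{:}t)(A_i)$ does not hand you $\coprod A_i$ without already having a coproduct of the extension classes, which presupposes what you want. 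You should replace this entire step by the Freyd argument.
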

\begin{proof}
We start by proving that any AB3* abelian category $\A$ with an injective cogenerator is AB4, that, together with its dual, will prove the last sentence of the corollary. Note that it is enough to prove that $\A$ is AB3 for it is well-known that  any AB3 abelian  category with an injective cogenerator is AB4   (see \cite[Corollary 3.2.9]{Po}). But proving the AB3 condition amounts  to prove that if $I$ is any set, viewed as a small category, then the constant diagram functor $\kappa :\A\longrightarrow\A^I$ has a left adjoint. This follows from Freyd's special adjoint theorem and its consequences (see \cite[Chapter 3, Exercises M, N]{Freyd}).

By Corollary \ref{cor.bijections-from-HRStilt} we have induced bijections $$\Phi :(\mathbf{AB},\mathbf{tor}_{tilt})\stackrel{\cong}{\longrightarrow} (\mathbf{AB3}_{proj},\mathbf{tor}_{faithful})$$ and $$\Phi :(\mathbf{AB}3^*_{inj},\mathbf{tor}_{cofaithful})\stackrel{\cong}{\longrightarrow}(\mathbf{AB},\mathbf{tor}_{cotilt}).$$ By restriction, we then get a bijection between the intersection of the domains and the intersection of the codomains. The intersection of the domains is precisely the class of pairs in 1 (note that the fact that $\T$ is cogenerating, equivalently that $\mathbf{t}\in\mathbf{tor}_{cofaithful}$, is automatic). Similarly, the intersection of codomains is precisely the class of pairs in 2. 
\end{proof}

\section{When is the heart of a torsion pair a Grothendieck category?}

\subsection{Initial results}

The work on the problem started with a series of papers \cite{CG}, \cite{CGM}, \cite{CMT}, \cite{MT}, which we now review in the terminology of this manuscript. Suppose that $\A$ is an abelian category with a classical tilting torsion $\mathbf{t}$ given by a 1-tilting object $V$. According to \cite[Theorem 3.3]{HRS} the realization functor gives an equivalence of triangulated categories $G:\mathcal{D}^b(\Ht)\stackrel{\cong}{\longrightarrow}\mathcal{D}^b(\A)$. On the other hand, by Corollary \ref{cor.module-category}, we know that $\Ht$ is a module category, actually via the equivalence of categories $\Ht (V[0],?):\Ht\stackrel{\cong}{\longrightarrow}\text{Mod}-R$, where $R=\text{End}_{\Ht}(V[0])\cong\text{End}_\A(V)$. Then we also have an equivalence of triangulated categories $\mathcal{D}^b(\text{Mod}-R)\stackrel{\cong}{\longrightarrow}\mathcal{D}^b(\Ht)$, and taking the composition, we get an induced equivalence of triangulated categories $\mathcal{D}^b(\text{Mod}-R)\stackrel{\cong}{\longrightarrow}\mathcal{D}^b(\A)$, taking $R$ to $V$. We can think of the inverse of this functor as a sort of right derived functor $\mathbb{R}H_V:\mathcal{D}(\A)\longrightarrow\mathcal{D}(\text{Mod}-R)$ of the canonical  functor $H_V:=\A(V,?):\A\longrightarrow\text{Mod}-R$. This last functor turns out to have a left adjoint $T_V:\text{Mod}-R\longrightarrow\A$ of which we can think as a sort of `tensor product by $V$'. We can then think of the equivalence $\mathbb{L}T_V:\D(\text{Mod}-R)\stackrel{\cong}{\longrightarrow}\D(\A)$ as a left derived functor of $T_V$. Note that for $A\in \A$ (resp. for $M \in \text{Mod}-R$),  $\mathbb{R}H_V(A)$ (resp. $\mathbb{L}T_V(M)$) is a complex and not just an $R$-module (resp. not just an object of $\A$). Concretely,  due to the fact that $\Ext_\A^2(V,?)=0$, one actually has that $\mathbb{R}H_V(A)$ has cohomology concentrated in degrees $0,1$, with $H^{0}(\mathbb{R}H_V(A))=\A(V,\A)$ and $H^1(\mathbb{R}H_V(A))=\Ext_\A^1(V,A)$, for all $A\in\A$. Dually, $\mathbb{L}T_V(M)$ has cohomology concentrated in degrees $-1,0$, with $H^0(\mathbb{L}T_V(M))=T_V(M)$ and $H^{-1}(\mathbb{L}T_V(M))=T'_V(M)$, where $T'_V:\text{Mod}-R\longrightarrow\A$ is the first left derived functor of $T_V$ in the classical sense, for all $R$-modules $M$.

Due to the definition of the torsion pair $\mathbf{t}$, one then  has $\mathbb{R}H_V(T)=\A(V,T)[0]$ and $\mathbb{R}H_V(F)=\Ext_\A^1(V,F)[-1]$. This implies that the equivalence $\mathbb{R}H_V:\D(A)\longrightarrow\D(\text{Mod}-R)$ induces equivalences of categories $$\F[1]\stackrel{\cong}{\longrightarrow}\mathcal{X}:=\{X\in\text{Mod}-R:\text{ }X\cong\Ext_\A^1(V,F)\text{, with }F\in\F\}$$

$$\T[0]\stackrel{\cong}{\longrightarrow}\mathcal{Y}:=\{Y\in\text{Mod}-R:\text{ }Y\cong\A(V,T)\text{, with }T\in\T\}.$$

Note that we then get induced equivalences of categories $H_V=\A(V,?):\T\stackrel{\cong}{\longrightarrow}\mathcal{Y}$ and $H'_V=\Ext_\A^1(V,?):\F\stackrel{\cong}{\longrightarrow}X$ whose quasi-inverses are necessarily $T_V:\mathcal{Y}\stackrel{\cong}{\longrightarrow}\T$ and $T'_V:\mathcal{X}\stackrel{\cong}{\longrightarrow}\F$. This essentially gives the proof of the following generalization of Brenner-Butler's theorem (see \cite{BB}),  due to Colpi and Fuller (see \cite[Theorem 3.2]{CF}):

\begin{theorem} \label{thm.Brenner-Butler}
Let $\A$ be an abelian category, let $V$ be a classical 1-tilting object in $\A$ and let $R=\text{End}_\A(V)$ the ring of endomorphisms of $V$. With the notation above, we have an equality of pairs $(\mathcal{X},\mathcal{Y})=(\text{Ker} (T_V),\text{Ker}(T'_V))$,  and this is a faitful torsion pair $\mathbf{t'}$ in $\text{Mod}-R$. Moreover, we have induced equivalences of categories $\xymatrix{\T \ar@<1ex>[r]^{H_V} & \ar@<1ex>[l]^{T_V}_{\sim} \mathcal{Y}}$ and $\xymatrix{\F \ar@<1ex>[r]^{H_V^{'}} & \ar@<1ex>[l]^{T^{'}_{V}}_{\sim} \mathcal{X} }$
\end{theorem}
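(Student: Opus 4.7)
The plan is to deduce the theorem from the equivalence $\mathbb{R}H_V:\D(\A)\stackrel{\cong}{\longrightarrow}\D(\text{Mod}-R)$ sketched in the paragraphs preceding the statement, by transporting the tilted torsion pair through it. Since $V$ is classical $1$-tilting, Theorem \ref{thm.tilting theorem} makes $V[0]$ a projective generator of $\Ht$, and Corollary \ref{cor.module-category} together with Proposition \ref{prop.Gabriel-Mitchell} makes $\Phi:=\Ht(V[0],-)$ an equivalence $\Ht\stackrel{\cong}{\longrightarrow}\text{Mod}-R$. The first step is to transport the tilted torsion pair $\bar{\mathbf{t}}=(\F[1],\T[0])$ in $\Ht$ along $\Phi$ to obtain a torsion pair $\mathbf{t'}=(\Phi(\F[1]),\Phi(\T[0]))$ in $\text{Mod}-R$.

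Next, I would identify these two images with $\mathcal{X}$ and $\mathcal{Y}$ concretely. For $T\in\T$ one has $\Phi(T[0])=\Ht(V[0],T[0])\cong\A(V,T)=H_V(T)$, hence $\Phi(\T[0])=\mathcal{Y}$; for $F\in\F$ one has $\Phi(F[1])\cong\D(\A)(V[0],F[1])\cong\Ext^1_\A(V,F)=H'_V(F)$, hence $\Phi(\F[1])=\mathcal{X}$. Precomposing with the stalk equivalences $\T\stackrel{\cong}{\longrightarrow}\T[0]$ and $\F\stackrel{\cong}{\longrightarrow}\F[1]$ then yields the equivalences $H_V\colon\T\stackrel{\cong}{\longrightarrow}\mathcal{Y}$ and $H'_V\colon\F\stackrel{\cong}{\longrightarrow}\mathcal{X}$. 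The claim that $T_V$ and $T'_V$ are their quasi-inverses follows by uniqueness of adjoints, applied to the identifications $\mathbb{L}T_V(\A(V,T))\cong T[0]$ and $\mathbb{L}T_V(\Ext^1_\A(V,F))\cong F[1]$ coming from $\mathbb{L}T_V\circ\mathbb{R}H_V\cong\text{id}$.

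For the kernel characterization I would invoke the fact (recalled before the statement) that $\mathbb{L}T_V(M)$ has cohomology concentrated in degrees $-1,0$ with $H^0(\mathbb{L}T_V(M))=T_V(M)$ and $H^{-1}(\mathbb{L}T_V(M))=T'_V(M)$. Then $M\in\mathcal{Y}=\Phi(\T[0])$ iff $\mathbb{L}T_V(M)$ is a stalk complex in degree $0$ on an object of $\T$, iff $T'_V(M)=0$; and $M\in\mathcal{X}=\Phi(\F[1])$ iff $\mathbb{L}T_V(M)$ is a stalk complex in degree $-1$ on an object of $\F$, iff $T_V(M)=0$. Finally, faithfulness of $\mathbf{t'}$ amounts to $\mathcal{Y}$ being a generating class in $\text{Mod}-R$; since $V$ is $1$-tilting, $\T=\Gen(V)$ is cogenerating in $\A$, so $\mathbf{t}$ is co-faithful, Proposition \ref{prop.HRS1} gives that $\bar{\mathbf{t}}$ is faithful in $\Ht$ (i.e., $\T[0]$ is generating there), and the equivalence $\Phi$ transports this to the generating property of $\mathcal{Y}$.

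The only delicate point in this plan is verifying the cohomology description of $\mathbb{L}T_V$ used in the third step, and that $\mathbb{L}T_V$ really is a quasi-inverse of $\mathbb{R}H_V$ on unbounded derived categories. This relies crucially on $V$ having projective dimension $\leq 1$ (Corollary \ref{cor.tilting-projdim1}), which forces $\mathbb{R}H_V$ to send any object of $\A$ to a complex concentrated in degrees $0,1$ and dually $\mathbb{L}T_V$ to send any $R$-module to a complex concentrated in degrees $-1,0$. Once this is in hand, all remaining assertions are direct translations of properties of $\bar{\mathbf{t}}$ in $\Ht$ through the equivalence $\Phi$.
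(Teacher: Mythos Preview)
Your proposal is correct and follows essentially the same approach as the paper: the paragraphs preceding the theorem already set up the derived equivalence $\mathbb{R}H_V$ with quasi-inverse $\mathbb{L}T_V$, compute $\mathbb{R}H_V$ on $\T$ and $\F$, and then declare that ``this essentially gives the proof''. You have simply made explicit the points the paper leaves implicit---the identification $(\mathcal{X},\mathcal{Y})=(\Ker T_V,\Ker T'_V)$ via the cohomology of $\mathbb{L}T_V$, and the faithfulness of $\mathbf{t}'$ via transport of the faithfulness of $\bar{\mathbf{t}}$ through $\Phi$. One small remark: the derived equivalence the paper invokes is at the level of bounded derived categories (coming from \cite[Theorem 3.3]{HRS}), not unbounded ones, so your closing worry about $\mathbb{L}T_V$ on $\D(\A)$ is moot; everything needed lives in $\D^b$, and the key observation that $\mathbb{L}T_V(M)\in\Ht$ for $M\in\text{Mod}-R$ (so that automatically $T_V(M)\in\T$ and $T'_V(M)\in\F$) is what makes the ``iff $T'_V(M)=0$'' and ``iff $T_V(M)=0$'' steps go through.
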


In addition, by the paragraphs above, the torsion pair $\mathbf{t}'$ is sent to $\bar{\mathbf{t}}=(\F[1],\T[0])$ by the equivalence of categories $\text{Mod}-R\stackrel{\cong}{\longrightarrow}\Ht$. Then, using Proposition \ref{prop. Equiva A[1]Ht}, one gets the following initial result:

\begin{proposition}{[CGM, Corollary 2.4]}
$\A$ is equivalent to $\mathcal{H}_{\mathbf{t}'},$ where $\mathbf{t}'$ is as above.
\end{proposition}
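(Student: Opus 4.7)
The plan is to reduce the statement to an application of Proposition \ref{prop. Equiva A[1]Ht}, using the equivalence $\text{Mod}-R\cong\Ht$ provided by Corollary \ref{cor.module-category}.

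First I would observe that, by the paragraph preceding the statement, the equivalence of abelian categories $\text{Mod}-R\stackrel{\cong}{\longrightarrow}\Ht$ (sending $R$ to $V[0]$) matches the torsion pair $\mathbf{t}'=(\mathcal{X},\mathcal{Y})$ in $\text{Mod}-R$ with the torsion pair $\bar{\mathbf{t}}=(\F[1],\T[0])$ in $\Ht$. Since the construction of the HRS tilted t-structure and its heart depend only on the pair $(\mathcal{C},\mathbf{s})$ of an abelian category and a torsion pair in it, any such equivalence induces an equivalence between the hearts of the corresponding HRS tilts; in particular, $\mathcal{H}_{\mathbf{t}'}\cong\mathcal{H}_{\bar{\mathbf{t}}}$.

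Next I would apply Proposition \ref{prop. Equiva A[1]Ht}(1) to the original pair $(\A,\mathbf{t})$. The torsion class $\T$ is cogenerating in $\A$ since, by Theorem \ref{thm.tilting theorem}, a tilting torsion pair is in particular co-faithful; moreover, $\Ht$ has enough projectives because it is equivalent to the module category $\text{Mod}-R$. The proposition then yields $\mathcal{H}_{\bar{\mathbf{t}}}\cong\A[1]$ via the realization functor, and hence $\mathcal{H}_{\bar{\mathbf{t}}}\cong\A$. Chaining the two equivalences gives $\A\cong\mathcal{H}_{\bar{\mathbf{t}}}\cong\mathcal{H}_{\mathbf{t}'}$, as desired.

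The only subtle point is checking that the equivalence $\text{Mod}-R\cong\Ht$ really carries $\mathbf{t}'$ to $\bar{\mathbf{t}}$ (both components matched correctly); this is however already encoded in the restricted equivalences $H_V:\T\stackrel{\sim}{\to}\mathcal{Y}$ and $H'_V:\F\stackrel{\sim}{\to}\mathcal{X}$ coming from Theorem \ref{thm.Brenner-Butler}, together with the identifications $T\leftrightarrow T[0]$ and $F\leftrightarrow F[1]$ that describe $\bar{\mathbf{t}}$. Once this compatibility of torsion pairs is in place, the argument is a purely formal composition of Corollary \ref{cor.module-category} with Proposition \ref{prop. Equiva A[1]Ht}.
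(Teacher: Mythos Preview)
Your proof is correct and follows essentially the same route as the paper: the paper also observes that the equivalence $\text{Mod}-R\cong\Ht$ carries $\mathbf{t}'$ to $\bar{\mathbf{t}}=(\F[1],\T[0])$ and then invokes Proposition~\ref{prop. Equiva A[1]Ht} to conclude $\mathcal{H}_{\mathbf{t}'}\cong\mathcal{H}_{\bar{\mathbf{t}}}\cong\A[1]\cong\A$. Your version is more explicit in checking the hypotheses of Proposition~\ref{prop. Equiva A[1]Ht}(1) (co-faithfulness of $\mathbf{t}$ and enough projectives in $\Ht$), but the argument is the same.
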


On the other hand, we have a dual situation,  starting with $(\text{Mod-}R,\mathbf{s})$ a pair in $(\mathbf{Mod}_{ring},\mathbf{tor}_{faithful})$. It then follows that $R[1]$ is a classical 1-tilting object of $\mathcal{H}_{\mathbf{s}}$ (see Corollary \ref{cor.reverse way}) so that $\Phi[(\text{Mod-}R,\mathbf{s})]=(\mathcal{H}_{\mathbf{s}},\overline{\mathbf{s}}) \in (\mathbf{AB},\mathbf{tor}_{tilt})$. For this reason, the last result indicates that Question \ref{ques.HRS-tstructure} for faithful torsion pairs in  modules categories is equivalent to the question of  when  an abelian category $\A$ with a classical 1-tilting object is a Grothendieck category. This fact was exploited by Colpi, Gregorio and Mantese, who obtained the first partial answer to Question \ref{ques.HRS-tstructure}. 
\begin{theorem}{[CGM, Theorem 3.7]}
Let $(\A,\mathbf{t})\in (\mathbf{AB},\mathbf{tor}_{tilt})$ and we consider $\mathbf{t}'$ as above. Then, the following assertions are equivalent:
\begin{enumerate}

\item $\A\cong \mathcal{H}_{\mathbf{t}'}$ is a Grothendieck category;
\item for any direct system $(X_\lambda)_{\lambda}$ in $\mathcal{H}_{\mathbf{t}'}$ the canonical morphism $\varinjlim H_V(X_\lambda) \rightarrow H_V(\varinjlim_{\mathcal{H}_{\mathbf{t}'}} X_\lambda)$ is a monomorphism;

\item the functor $H_V$ preserves direct limits.

\end{enumerate}
If $\mathbf{t}'$ is of finite type, then the previous conditions are equivalent to the condition  that the functor $T_V \circ H_V$ preserve direct limits.
\end{theorem}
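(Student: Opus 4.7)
My plan is to transport the problem, via the Brenner--Butler equivalence $\Psi:\A\stackrel{\sim}{\longrightarrow}\mathcal{H}_{\mathbf{t}'}$, $A\leadsto\mathbb{R}H_V(A)[1]$, underlying Theorem~\ref{thm.Brenner-Butler}, to a question about the cohomology of the heart of the HRS t-structure $\tau=\tau_{\mathbf{t}'}$. Under $\Psi$, $H_V:\A\to\text{Mod}-R$ corresponds to the restriction $H^{-1}_{|\mathcal{H}_{\mathbf{t}'}}:\mathcal{H}_{\mathbf{t}'}\to\text{Mod}-R$ of the standard $(-1)$-cohomology functor, which by Lemma~\ref{lem-preservation of (co)products by class-homology}(2) is a right adjoint; this explains why preservation of direct limits is non-trivial.

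The key step is to describe the canonical morphism of (2) explicitly. For a direct system $(X_\lambda)$ in $\mathcal{H}_{\mathbf{t}'}$, let $\widetilde{X}$ be the term-wise direct limit in $\D(\text{Mod}-R)$, which represents the homotopy colimit since $\text{Mod}-R$ is AB5. Exactness of direct limits in $\text{Mod}-R$ gives $H^0(\widetilde{X})=\varinjlim H^0(X_\lambda)\in\mathcal{X}$ and $H^{-1}(\widetilde{X})=\varinjlim H_V(X_\lambda)$, the latter lying in $\text{Mod}-R$ but not necessarily in $\mathcal{Y}$. Since $\widetilde{X}$ lies in the aisle $\mathcal{U}_{\mathbf{t}'}$ (closed under coproducts and cones of morphisms between its objects) and, for $Y\in\mathcal{H}_{\mathbf{t}'}$, the natural morphism $\D(H^0_\tau(\widetilde{X}),Y)\to\D(\widetilde{X},Y)$ is an isomorphism (obtained by applying $\D(?,Y)$ to the HRS truncation triangle of $\widetilde{X}[-1]$ and using $Y\in\mathcal{U}_{\mathbf{t}'}^\perp[1]$), one identifies $\widehat{X}:=\varinjlim^{\mathcal{H}_{\mathbf{t}'}}X_\lambda$ with $H^0_\tau(\widetilde{X})$. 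Reading off the resulting truncation sequence then yields $H^{-1}(\widehat{X})=(1:t_{\mathbf{t}'})(\varinjlim H_V(X_\lambda))$, so that the canonical morphism of (2) is identified with the natural epimorphism $\varinjlim H_V(X_\lambda)\twoheadrightarrow (1:t_{\mathbf{t}'})(\varinjlim H_V(X_\lambda))$, whose kernel is $t_{\mathbf{t}'}(\varinjlim H_V(X_\lambda))$.

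From this explicit description, $(3)\Rightarrow (2)$ is immediate, and $(2)\Rightarrow (3)$ follows because an epimorphism that is also a monomorphism is an isomorphism. Both amount to the vanishing of $t_{\mathbf{t}'}(\varinjlim H_V(X_\lambda))$ for every direct system in $\mathcal{H}_{\mathbf{t}'}$; taking $X_\lambda=F_\lambda[1]$ with $F_\lambda\in\mathcal{Y}$ arbitrary (so that $H_V(F_\lambda[1])=F_\lambda$), this is equivalent to $\mathcal{Y}$ being closed under direct limits in $\text{Mod}-R$, i.e.\ to $\mathbf{t}'$ being of finite type. For $(1)\Leftrightarrow (2)$, I would use that $\mathcal{H}_{\mathbf{t}'}$ has a generator $R[1]$ (Corollary~\ref{cor.reverse way}(1)), so that the Grothendieck property reduces to AB5, and exploit the formula $\widehat{X}=H^0_\tau(\widetilde{X})$ together with the exactness of direct limits in $\D(\text{Mod}-R)$ to see that AB5 in $\mathcal{H}_{\mathbf{t}'}$ holds precisely when $\mathbf{t}'$ is of finite type.

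For the addendum, the composition $T_V\circ H_V:\A\to\A$ is naturally isomorphic to the torsion radical $t_\mathbf{t}$: since $H_V|_\F=0$ one has $H_V(A)=H_V(t_\mathbf{t}(A))$, and the restriction $T_V\circ H_V|_\T\cong 1_\T$ (from the equivalence $\T\cong\mathcal{Y}$ of Theorem~\ref{thm.Brenner-Butler}) then gives $T_V\circ H_V(A)\cong t_\mathbf{t}(A)$. Under the finite-type hypothesis on $\mathbf{t}'$, conditions (1)--(3) hold, so $H_V$ preserves direct limits, and since $T_V$ is a left adjoint it does too; hence $t_\mathbf{t}=T_V\circ H_V$ preserves direct limits, and conversely its doing so transports via $\Psi$ back to the AB5-type behaviour characterising the other conditions. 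The main obstacle is the implication $(1)\Leftrightarrow (2)$: the careful point is translating AB5 in $\mathcal{H}_{\mathbf{t}'}$ through the HRS truncation formula $\widehat{X}=H^0_\tau(\widetilde{X})$ into the finite-type property of $\mathbf{t}'$.
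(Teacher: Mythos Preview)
The paper does not supply its own proof of this statement: it is quoted verbatim from \cite{CGM} as part of the historical survey in Section~4.1, so there is no in-paper argument to compare against. I can therefore only assess your proposal on its own merits.

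Your treatment of $(2)\Leftrightarrow(3)$ is correct and efficient: identifying $H_V$ with $H^{-1}_{|\mathcal{H}_{\mathbf{t}'}}$ via the Brenner--Butler equivalence and using that the canonical map $\varinjlim H^{-1}(X_\lambda)\to H^{-1}(\varinjlim_{\mathcal{H}_{\mathbf{t}'}}X_\lambda)$ is always an epimorphism (cf.\ \cite[Proposition~4.2]{PS1}) immediately gives that it is a monomorphism iff it is an isomorphism. Your further identification of $(2)\Leftrightarrow(3)$ with the finite-type condition on $\mathbf{t}'$ is also correct.

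The genuine gap is in $(1)\Leftrightarrow(2)$. Having shown $(2)\Leftrightarrow(\mathbf{t}'$ of finite type$)$, and noting that $(1)$ is just the AB5 condition on $\mathcal{H}_{\mathbf{t}'}$ (a generator being available), the equivalence $(1)\Leftrightarrow(2)$ is \emph{literally} the statement ``$\mathcal{H}_{\mathbf{t}'}$ is AB5 $\Leftrightarrow$ $\mathbf{t}'$ is of finite type'' for the faithful torsion pair $\mathbf{t}'$ in $\text{Mod}-R$. You then say you will ``exploit the formula $\widehat{X}=H^0_\tau(\widetilde{X})$\dots\ to see that AB5 in $\mathcal{H}_{\mathbf{t}'}$ holds precisely when $\mathbf{t}'$ is of finite type,'' but this is a restatement of what must be proved, not a proof. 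The implication (finite type $\Rightarrow$ AB5) can indeed be extracted from that formula with some care (under finite type, $\widetilde{X}$ already lies in $\mathcal{H}_{\mathbf{t}'}$ and termwise direct limits compute the heart direct limit). The reverse implication (AB5 $\Rightarrow$ finite type), however, is exactly the hard direction that the surrounding paper devotes Section~4.4 (and originally \cite[Theorem~4.8]{PS1}) to establishing in full generality, and which for faithful torsion pairs in module categories is the content of the Colpi--Gregorio theorem stated immediately after the one you are proving. Nothing in your sketch shows how AB5 of the heart forces $t'(\varinjlim F_\lambda)=0$ for an arbitrary direct system $(F_\lambda)$ in $\mathcal{Y}$; the truncation formula alone does not yield this. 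The original argument in \cite{CGM} must proceed differently, giving a direct link between exactness of direct limits in $\A$ and the behaviour of $H_V=\A(V,?)$ that does not pass through the finite-type reformulation.

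For the addendum, your identification $T_V\circ H_V\cong t_{\mathbf{t}}$ is correct, and the implication $(3)\Rightarrow(T_V\circ H_V$ preserves direct limits$)$ is immediate since $T_V$ is a left adjoint. But your converse (``transports via $\Psi$ back to the AB5-type behaviour'') is again too vague, and in fact your own analysis makes the addendum nearly vacuous: since you have shown (finite type of $\mathbf{t}'$) $\Leftrightarrow (3)$, the hypothesis of the addendum already forces $(1)$--$(3)$ to hold, so the extra condition is automatically equivalent to them.
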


Already in \cite{CGM} the authors gave  necessary conditions for a faithful torsion pair in a module category  to have a heart which is  a Grothendieck category, a condition that was shown to be also sufficient in an unpublished paper by Colpi and Gregorio \cite{CG} (see \cite[Theorem 6.2]{Mat}).

\begin{theorem}{ \cite[Proposition 3.8]{CGM} and \cite[Theorem 1.3]{CG}}
Let $R$ be a ring,  let $\mathbf{t}=(\T,\F)$ be a faithful torsion pair in $\text{Mod-}R$ and let $\Ht$ be the heart of the associated Happel-Reiten-Smal\o \ t-structure in $\D(\text{Mod}-R)$. Then $\Ht$ is a Grothendieck category if, and only if, $\mathbf{t}=(\T,\F)$ is a cotilting torsion pair.
\end{theorem}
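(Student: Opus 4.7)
The plan is to establish the two implications separately, exploiting the structural results on hearts with a projective generator or an injective cogenerator developed earlier in the paper.

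For the implication $\Ht$ Grothendieck $\Rightarrow$ $\mathbf{t}$ cotilting: a Grothendieck category is in particular AB3* and admits an injective cogenerator $E$. I would apply the injective-cogenerator version of Proposition \ref{prop.projective-generator-implies-qtilting} to conclude that $Q:=H^{-1}(E)$ is quasi-cotilting in $\text{Mod}-R$ and that $\mathbf{t}$ is its associated quasi-cotilting torsion pair; in particular $\Cogen(Q)=\underline{\Cogen}(Q)\cap\Ker(\Ext^{1}_{R}(?,Q))$. The faithful hypothesis on $\mathbf{t}$ means that $\F=\Cogen(Q)$ is a generating class in $\text{Mod}-R$, so every module is a quotient of an object in $\Cogen(Q)$ and therefore $\underline{\Cogen}(Q)=\text{Mod}-R$. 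The formula then collapses to $\Cogen(Q)=\Ker(\Ext^{1}_{R}(?,Q))$, and since this class is generating, $Q$ is 1-cotilting. The argument is the dual of the final step in the proof of Theorem \ref{thm.tilting theorem}.

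For the converse $\mathbf{t}$ cotilting $\Rightarrow$ $\Ht$ Grothendieck: fix a 1-cotilting object $Q$ with $\F=\Cogen(Q)$. Theorem \ref{thm.cotilting theorem} yields that $\Ht$ is AB3* with injective cogenerator $Q[1]$, and the argument in the proof of Corollary \ref{cor.Po-St-tilting-cotilting-corresp}, based on Freyd's special adjoint functor theorem, upgrades this to AB3, so that $\Ht$ also has all set-indexed coproducts. Since $\mathbf{t}$ is faithful and $R$ is projective in $\text{Mod}-R$, one has $R\in\F$ and hence $R[1]\in\Ht$. Combining $R[1]$ with a set of representatives of cyclic objects of $\T[0]$, and using the short exact sequence $0\to H^{-1}(M)[1]\to M\to H^{0}(M)[0]\to 0$ that governs every $M\in\Ht$, one produces a set of generators for $\Ht$.

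The main obstacle is then AB5, i.e.\ the exactness of directed colimits in $\Ht$. My route would be to show first that $\F=\Cogen(Q)$ is closed under direct limits in $\text{Mod}-R$. This is equivalent to the 1-cotilting module $Q$ being pure-injective, a statement which is the content of the classical Bazzoni-type theorems on cotilting modules; the faithful hypothesis together with the structure of $\F$ as the right $\Ext^{1}$-perpendicular of $Q$ should allow a direct-limit argument to recover this closure property. Once $\F$ is closed under direct limits, direct colimits in $\Ht$ can be computed by separately taking the colimits of the torsion-free and torsion cohomology parts in $\text{Mod}-R$, using the adjunctions of Lemma \ref{lem-preservation of (co)products by class-homology}; exactness then transfers to $\Ht$ by a diagram chase on the defining triangle of each object. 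Equivalently, once $\F$ is known to be closed under direct limits, the Grothendieck conclusion follows as a direct instance of Theorem \ref{thm.PS}.
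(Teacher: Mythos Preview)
The paper does not supply its own proof of this theorem; it is quoted as a historical milestone from the literature ([CGM, Proposition 3.8] for the necessary direction, [CG, Theorem 1.3] for sufficiency) in Subsection~4.1, as part of the route leading to the general Theorem~\ref{thm.PS}. So there is no in-paper argument to compare against directly.

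Your proposal is essentially correct. The $(\Rightarrow)$ direction is exactly the dual of the step $(2)\Rightarrow(1)$ in Theorem~\ref{thm.tilting theorem}, and indeed this is how Theorem~\ref{thm.cotilting theorem} handles it. For $(\Leftarrow)$, your route via Bazzoni's theorem (every 1-cotilting module over a ring is pure-injective) followed by Theorem~\ref{thm.PS} (or equivalently Corollary~\ref{cor.AB5-via-pureinjectivity}) is valid and is the natural way to recover the result from the paper's later machinery. Once you invoke Theorem~\ref{thm.PS}, your separate generator construction and the vaguely described ``diagram chase'' become redundant, since that theorem already delivers the full Grothendieck conclusion from finite type alone.

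Two remarks. First, your argument for $(\Leftarrow)$ is logically sound but historically inverted: in the paper's narrative the CGM/CG theorem \emph{precedes} and motivates Theorem~\ref{thm.PS}, so the original Colpi--Gregorio proof necessarily established AB5 for the cotilting heart by other means. This is not circular, since the proof of Theorem~\ref{thm.PS} in \cite{PS1,PS2} does not rely on the cotilting special case. Second, the step ``$Q$ is pure-injective'' is genuinely nontrivial: your parenthetical suggestion that ``the faithful hypothesis \dots\ should allow a direct-limit argument to recover this closure property'' is not a proof, and you are right to defer to Bazzoni's result rather than attempt an ad hoc argument there.
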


\subsection{The solution of the problem}

The solution to the problem was given by the authors in \cite{PS1} and $\cite{PS2}$. We realized that the hard part of the problem was to deal with the AB5 condition on $\Ht$. This  naturally led to a detailed study of direct limits in the heart. And, in order to understand those direct limits, it was a preliminary step to understand the behavior of the stalk complexes in the heart with respect to direct limits. 

\begin{proposition}{[PS1, Lemma 4.1 and Proposition 4.2]}
Let $\mathbf{t}=(\T,\F)$ be a torsion pair in the Grothendieck category $\G$. The following assertions  hold:
\begin{enumerate}
\item The functor $H^{0}:\Ht \rightarrow \G$ is right exact and preserve coproducts;

\item The functor $H^{-1}:\Ht \rightarrow \G$ is left exact and preserve coproducts;
 
\item For every $(M_\lambda)$ direct system in $\Ht$, the induced morphism $\varinjlim H^{k}(M_\lambda) \rightarrow H^{k}(\varinjlim_{\Ht} M_\lambda)$ is an epimorphism, for $k=-1$, and an isomorphism, for $k\neq -1.$

\item the pair $\overline{t}=(\F[1],\T[0])$ in $\Ht$ is of finite type.

\item For each   direct system $(F_\lambda)_{\lambda}$ in $\F$,   we have a canonical isomorphism $\varinjlim_{\mathcal{H}_\mathbf{t}}F_\lambda [1]\cong (1:t)(\varinjlim F_\lambda )[1]$.
\end{enumerate}
\end{proposition}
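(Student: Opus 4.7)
The plan proceeds in four steps. The central observation is that the HRS t-structure $\tau_{\mathbf{t}} = (\U_{\mathbf{t}}, \W_{\mathbf{t}})$ is \emph{smashing}: from the description of $\W_{\mathbf{t}}$ in Example \ref{examples-t-structures}(2), the AB5 property of $\G$ (so that canonical cohomology $H^{k}\colon \D(\G) \to \G$ preserves coproducts), and the closure of $\F$ under coproducts, $\W_{\mathbf{t}}$ is closed under coproducts in $\D(\G)$. Since aisles always are, so is $\Ht$, and coproducts in $\Ht$ coincide with those computed in $\D(\G)$. Assertions (1) and (2) follow immediately: right exactness of $H^{0}|_{\Ht}$ (respectively left exactness of $H^{-1}|_{\Ht}$) comes from the long exact sequence of canonical cohomology applied to triangles representing short exact sequences in $\Ht \subseteq \D^{[-1,0]}(\G)$, and preservation of coproducts follows from smashing combined with preservation of coproducts by $H^{k}$ on $\D(\G)$.

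For (5), I would argue directly via the universal property. Setting $F := (1:t)(\varinjlim_{\G} F_{\lambda}) \in \F$, the composites $F_{\lambda} \to \varinjlim F_{\lambda} \twoheadrightarrow F$ shift to compatible maps $F_{\lambda}[1] \to F[1]$ in $\Ht$. Any cocone $\{F_{\lambda}[1] \to N\}$ with $N \in \Ht$ factors through the torsion subobject $\bar{t}(N) = H^{-1}(N)[1]$ of $N$ (because $\F[1]$ is the torsion class of $\bar{\mathbf{t}}$), yielding a cocone $F_{\lambda} \to H^{-1}(N)$ in $\G$. The universal property of $\varinjlim_{\G} F_{\lambda}$ together with the torsionfreeness of $H^{-1}(N)$ then gives a unique $F \to H^{-1}(N)$, whence a unique $F[1] \to N$.

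The key technical input for (3) and (4), and the main obstacle of the proof, is the claim that if $(A_{\lambda})$ is a direct system in $\T$ and $\psi\colon \coprod_{\mu < \lambda} A_{\mu} \to \coprod_{\lambda} A_{\lambda}$ is the standard difference map whose cokernel is $\varinjlim A_{\lambda}$, then $K := \Ker(\psi)$ lies in $\T$. I would prove this using the AB5 property of $\G$: higher derived functors of filtered colimits vanish, so the bar-type complex computing $\varinjlim A_{\lambda}$ is exact in positive degrees, exhibiting $K$ as the image of the next canonical differential $d_{2}\colon \coprod_{\mu<\nu<\lambda} A_{\mu} \to \coprod_{\mu<\lambda} A_{\mu}$; this image is a quotient of a coproduct of objects of $\T$, hence itself in $\T$ by closure of $\T$ under coproducts and quotients.

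Granting this, complete the right-exact sequence $\coprod M_{\mu} \xrightarrow{f} \coprod M_{\lambda} \to L := \varinjlim_{\Ht} M_{\lambda} \to 0$ in $\Ht$ to a triangle $\coprod M_{\mu} \to \coprod M_{\lambda} \to C \xrightarrow{+}$ in $\D(\G)$, with $C \in \D^{[-2,0]}(\G)$. The long exact sequence of canonical cohomology exhibits $H^{-1}(C)$ inside an extension $0 \to \varinjlim H^{-1}(M_{\lambda}) \to H^{-1}(C) \to K \to 0$ with $K \in \T$, and gives $H^{0}(C) \cong \varinjlim H^{0}(M_{\lambda})$. Using the identification $L \cong \tau^{>}_{\U_{\mathbf{t}}}(C[-1])[1]$ from the preliminaries and the shifted truncation triangle $V[1] \to C \to L \xrightarrow{+}$ with $V \in \U_{\mathbf{t}}$ (so $H^{0}(V) \in \T$), the cohomology long exact sequence produces $H^{0}(L) \cong \varinjlim H^{0}(M_{\lambda})$ (the case $k=0$ of (3)) and $H^{-1}(L) \cong H^{-1}(C)/\text{Im}(\alpha)$ for some $\alpha\colon H^{0}(V) \to H^{-1}(C)$ whose image is torsion. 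Applying the right-exact functor $(1:t)$ to the displayed extension collapses $K$, giving $\varinjlim H^{-1}(M_{\lambda}) \twoheadrightarrow (1:t)(H^{-1}(C))$; composing with the surjection $(1:t)(H^{-1}(C)) \twoheadrightarrow H^{-1}(L)$ yields the epimorphism in case $k=-1$ of (3). For (4), specializing $M_{\lambda} = T_{\lambda}[0]$ gives $\varinjlim H^{-1}(M_{\lambda}) = 0$, hence $H^{-1}(C) \cong K \in \T$; then the same analysis forces $H^{-1}(L) = 0$, so $L \cong (\varinjlim T_{\lambda})[0] \in \T[0]$.
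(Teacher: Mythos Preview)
Your proof is essentially correct, and since the paper does not supply its own argument for this proposition (it only cites [PS1]), there is no in-paper proof to compare against. A few remarks:

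\medskip

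\textbf{Parts (1), (2), (5).} These are clean and correct. Note that the paper's later Lemma~\ref{lem-preservation of (co)products by class-homology} gives an alternative route to the right-exactness and colimit-preservation of $H^{0}|_{\Ht}$ (it is a left adjoint), and dually left-exactness of $H^{-1}|_{\Ht}$; your smashing argument is more direct and also yields the coproduct-preservation of $H^{-1}|_{\Ht}$, which the adjunction does not.

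\medskip

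\textbf{Parts (3), (4).} Your cone computation is correct. Two small points worth tightening:

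First, in the sentence ``composing with the surjection $(1:t)(H^{-1}(C)) \twoheadrightarrow H^{-1}(L)$'': since $\text{Im}(\alpha)\in\T$ and $H^{-1}(L)=H^{-1}(C)/\text{Im}(\alpha)\in\F$, one actually has $\text{Im}(\alpha)=t(H^{-1}(C))$, so this map is an isomorphism. What you really use is that the composite $\varinjlim H^{-1}(M_{\lambda})\hookrightarrow H^{-1}(C)\twoheadrightarrow H^{-1}(L)$ has cokernel which is simultaneously a quotient of $K\in\T$ and a quotient of $H^{-1}(L)\in\F$, hence zero. That this composite is the canonical comparison map follows because $\coprod M_{\lambda}\to C\to L$ is the cokernel projection in $\Ht$.

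Second, your justification of the key claim $K\in\T$ is slightly elliptic. The precise argument is: the simplicial bar resolution $B_{\bullet}(A)\to A$ in $[I,\G]$ (with $B_{n}(A)(i)=\coprod_{i_{0}\leq\cdots\leq i_{n}\leq i}A_{i_{0}}$) is always exact, by the contracting homotopy coming from the terminal object of each slice $I/i$. Since $\G$ is AB5 and $I$ is directed, $\varinjlim\colon[I,\G]\to\G$ is exact, and applying it to this resolution yields the exact augmented bar complex $\coprod_{i_{0}\leq\cdots\leq i_{n}}A_{i_{0}}$. Hence $\Ker(d_{1})=\text{Im}(d_{2})$ is a quotient of a coproduct of objects of $\T$, so lies in $\T$. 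Your phrasing ``higher derived functors of filtered colimits vanish, so the bar-type complex is exact'' compresses these two logically distinct steps (exactness of the bar resolution; exactness of $\varinjlim$) into one, but the content is right.
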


For instance, using the previous result and  Proposition \ref{prop. Equiva A[1]Ht}, we immediately get a necessary condition for  a positive answer in the case of a co-faithful torsion pair.

\begin{lemma}\label{lem. Short F=LimF}
Let $\G$ be a Grothendieck category and let $\mathbf{t}=(\T,\F)$ be a co-faithful torsion pair. If $\Ht$ is a Grothendieck category, then $\mathbf{t}$ is of finite type.
\end{lemma}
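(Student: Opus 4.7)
The plan is to iterate the HRS tilting process once and then apply item~(4) of the preceding proposition to the tilted pair. First I would observe that, since $\Ht$ is a Grothendieck category by hypothesis, every assertion of that proposition applies to the pair $(\Ht,\bar{\mathbf{t}})$ in place of $(\G,\mathbf{t})$, where $\bar{\mathbf{t}}=(\F[1],\T[0])$ denotes the HRS-tilt of $\mathbf{t}$. In particular, item~(4) of that proposition, now applied to $(\Ht,\bar{\mathbf{t}})$, says that the twice-tilted torsion pair $\overline{\bar{\mathbf{t}}}=(\T[1],\F[1])$ in the iterated heart $\Ha_{\overline{\mathbf{t}}}$ is of finite type; equivalently, direct limits in $\Ha_{\overline{\mathbf{t}}}$ of objects of its torsionfree class $\F[1]$ exist and remain in $\F[1]$.

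Next I would translate this statement back to $\G$. For this I would invoke Proposition~\ref{prop. Equiva A[1]Ht}(2): $\G$ is Grothendieck and hence has enough injectives, while the co-faithful hypothesis is precisely the assumption that $\T$ cogenerates $\G$. The realization functor then yields an equivalence $\Ha_{\overline{\mathbf{t}}}\cong\G[1]$ which transports $\overline{\bar{\mathbf{t}}}$ to $\mathbf{t}[1]$; undoing the shift identifies it with $\mathbf{t}=(\T,\F)$ in $\G$. Since ``being of finite type'' is an intrinsic categorical property, preserved by equivalences, the conclusion would be that direct limits in $\G$ of objects in $\F$ exist and lie in $\F$, i.e.\ $\mathbf{t}$ is of finite type.

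I do not anticipate a real obstacle. The only subtlety is the bookkeeping that the iterated tilt, the realization equivalence and the shift conspire to take $\overline{\bar{\mathbf{t}}}$ to $\mathbf{t}$; but this is exactly the content of the ``$\Phi^{2}[(\G,\mathbf{t})]\cong(\G,\mathbf{t})$'' clause of Proposition~\ref{prop. Equiva A[1]Ht}. So the proof would reduce to a clean composition of two cited results rather than any new calculation.
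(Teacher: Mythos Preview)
Your proposal is correct and follows essentially the same argument as the paper: apply assertion~(4) of the preceding proposition to the pair $(\Ht,\bar{\mathbf{t}})$ to see that $\overline{\bar{\mathbf{t}}}=(\T[1],\F[1])$ is of finite type in $\Ha_{\overline{\mathbf{t}}}$, then use Proposition~\ref{prop. Equiva A[1]Ht}(2) (available because $\G$ has enough injectives and $\T$ is cogenerating) to identify $\Ha_{\overline{\mathbf{t}}}\cong\G[1]$ and transport the finite-type conclusion back to $\mathbf{t}$. The paper presents these two steps in the opposite order, but the content is identical.
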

\begin{proof}
Suppose that $\Ht$ is a Grothendieck category. Since  $\G$ has enough injectives, from Proposition \ref{prop. Equiva A[1]Ht} we get that $\Ha_{\overline{\mathbf{t}}}$ is equivalent to $\G[1]$, via realization functor, where $\overline{\mathbf{t}}=(\F[1],\T[0])$ is the corresponding torsion pair in $\Ht$. Using assertion 4 of the previous proposition,  we deduce that $\overline{\overline{\mathbf{t}}}=(\T[1],\F[1])$ is a torsion pair of finite type in $\Ha_{\overline{\mathbf{t}}}\cong\mathcal{G}[1]$. That is, given a family $(F_{\lambda})_{\lambda}$ in $\F$, we have that $\varinjlim_{\mathcal{G}[1]}F_\lambda [1]\in\mathcal{F}[1]$, and this implies that $\varinjlim F_\lambda\in\mathcal{F}$ due to the canonical equivalence $\mathcal{G}\cong\mathcal{G}[1]$, which restricts to  $\mathcal{F}\cong\mathcal{F}[1]$.
\end{proof}

Another point of the strategy of the authors was to use the canonical cohomology functors $H^k:\mathcal{D}(\mathcal{G})\longrightarrow\mathcal{G}$ to approach the problem. In that way one gets sufficient conditions:

\begin{proposition}{[PS1, Proposition 3.4]}
Let $\G$ be a Grothendieck category and let $\sigma=(\U,\U^{\perp}[1])$ be a t-structure on $\D(\G)$. We denote its heart by $\Ha_{\sigma}$. If the classical cohomological functors $H^{k}:\Ha_{\sigma} \rightarrow \G$ preserve direct limits, for all integer $k$, then $\Ha_{\sigma}$ is an AB5 abelian category.
\end{proposition}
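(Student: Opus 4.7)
The plan is to transfer the AB5 condition from $\G$ (which is Grothendieck, hence AB5) to $\Ha_\sigma$, using the classical cohomology functors $H^k\colon \D(\G)\to\G$ as a jointly conservative family of test functors. Given a short exact sequence of direct systems $0\to (X_\lambda)\to(Y_\lambda)\to(Z_\lambda)\to 0$ in $\Ha_\sigma$ with direct limits $X,Y,Z\in\Ha_\sigma$ and induced morphisms $f\colon X\to Y$ and $g\colon Y\to Z$, right exactness of $\varinjlim$ (as a left adjoint) gives that $g$ is an epimorphism with $\text{Im}(f)=\Ker(g)$, so the task reduces to showing that $f$ is a monomorphism in $\Ha_\sigma$.

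I would realize the would-be short exact sequence $0\to X\to Y\to Z\to 0$ as a triangle in $\D(\G)$ by means of homotopy colimits. Each $0\to X_\lambda\to Y_\lambda\to Z_\lambda\to 0$ in $\Ha_\sigma$ yields a triangle in $\D(\G)$; since $\G$ is AB4, $\D(\G)$ admits arbitrary coproducts preserved by every $H^k$, and the homotopy colimit of any direct system $(M_\lambda)$ in $\D(\G)$ is defined by the triangle
$$\coprod_{\lambda}M_\lambda \xrightarrow{\mathbf{1}-\mathrm{shift}} \coprod_{\lambda}M_\lambda \to \mathrm{hocolim}\,M_\lambda\stackrel{+}{\to}.$$
Taking cohomology and using AB5 in $\G$, one obtains $H^k(\mathrm{hocolim}\,M_\lambda)\cong \varinjlim_\G H^k(M_\lambda)$ for every $k$. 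Applying this construction termwise to the coherent direct system of triangles $X_\lambda\to Y_\lambda\to Z_\lambda\stackrel{+}{\to}$, and using that the shift maps form a morphism of triangles on each coproduct triangle, produces a new triangle
$$\mathrm{hocolim}\,X_\lambda\to \mathrm{hocolim}\,Y_\lambda\to \mathrm{hocolim}\,Z_\lambda\stackrel{+}{\to}$$
in $\D(\G)$, since the cone of a morphism of triangles again fits in a triangle.

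The crucial step is the identification $\mathrm{hocolim}\,X_\lambda\cong X$ in $\D(\G)$, and analogously for $Y$ and $Z$. The coherent family of canonical morphisms $X_\lambda\to X$ in $\D(\G)$ (coming from the direct limit in $\Ha_\sigma$) induces a canonical morphism $\mathrm{hocolim}\,X_\lambda\to X$ in $\D(\G)$, whose effect on $H^k$, under the isomorphism above, is the canonical comparison map $\varinjlim H^k(X_\lambda)\to H^k(X)$. The hypothesis makes each such comparison an isomorphism, and since the classical $H^k$'s are jointly conservative on $\D(\G)$, the morphism $\mathrm{hocolim}\,X_\lambda\to X$ is itself an isomorphism in $\D(\G)$; the same argument applies to $Y$ and $Z$.

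Combining the two steps yields a triangle $X\to Y\to Z\stackrel{+}{\to}$ in $\D(\G)$ with all three vertices in $\Ha_\sigma$, which by the standard description of the heart is precisely a short exact sequence $0\to X\to Y\to Z\to 0$ in $\Ha_\sigma$. Hence $f$ is a monomorphism, and $\Ha_\sigma$ is AB5. The main obstacle is making precise that the canonical morphism $\mathrm{hocolim}\,X_\lambda\to X$ really does induce the expected comparison on classical cohomology: this is the sole place where the hypothesis on $H^k$ enters, and it is essential for concluding via joint conservativity on $\D(\G)$.
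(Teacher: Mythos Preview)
The paper does not actually prove this proposition; it is cited from [PS1] without argument, so there is no in-paper proof to compare against. Your strategy---identify the $\Ha_\sigma$-direct-limit with a homotopy colimit in $\D(\G)$, check that the classical $H^k$ commute with both, and conclude via the joint conservativity of the family $\{H^k\}$---is the natural one and is essentially what one expects.

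There is, however, a real technical gap. The Milnor-style triangle
\[
\coprod_{\lambda}M_\lambda \xrightarrow{\ \mathbf{1}-\mathrm{shift}\ } \coprod_{\lambda}M_\lambda \longrightarrow \mathrm{hocolim}\,M_\lambda \stackrel{+}{\longrightarrow}
\]
only makes sense for \emph{sequential} ($\mathbb N$-indexed) direct systems: ``shift'' is the endomorphism of $\coprod_n M_n$ induced by the successor maps $M_n\to M_{n+1}$, and for a general directed poset $\Lambda$ there is simply no such endomorphism. The obvious substitute---indexing the left-hand coproduct by the arrows of $\Lambda$ rather than its objects---does not yield a monomorphism on $H^k$ even when the target is AB5, so the long exact sequence no longer collapses to $H^k(\mathrm{hocolim})\cong\varinjlim H^k(M_\lambda)$. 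Since AB5 demands exactness over \emph{all} directed posets, and many directed posets (e.g.\ the finite subsets of an uncountable set, ordered by inclusion) admit no cofinal chain, you cannot reduce to the sequential case. As written, your argument therefore only establishes exactness of countable directed limits in $\Ha_\sigma$.

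To repair this one must exploit that $\D(\G)$ carries more structure than a bare triangulated category: since $\G$ is Grothendieck, $\D(\G)$ is the homotopy category of the model category $\mathrm{Ch}(\G)$ (equivalently, the base of a stable derivator), and in that enhanced setting homotopy colimits over arbitrary small diagrams exist, commute with $H^k$ for filtered shapes, and send coherent diagrams of cofiber sequences to cofiber sequences. One then also has to lift the incoherent diagram $\Lambda\to\Ha_\sigma\subset\D(\G)$ to a coherent one in the enhancement; this is a further point your outline does not treat and it is not automatic for arbitrary posets $\Lambda$.
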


The following is now a natural question that remains open.

\begin{question}
Given a Grothendieck category $\G$ and a t-structure $\sigma=(\U,\U^{\perp}[1])$ in $\D(\G)$ such that its heart $\Ha_{\sigma}$ is an AB5 abelian category, do the classical cohomological functors $H^{m}:\Ha_{\sigma} \rightarrow \G$ preserve direct limits, for all $m\in \mathbb{Z}?$
\end{question}

 Recently, Chen, Han and Zhou have given necessary and sufficient conditions to have an equivalence  $\Ha_{\overline{\mathbf{t}}}$ is equivalent to $\G[1]$.

\begin{theorem}{[CHZ, Theorem A]} \label{thm.CHZ}
Let $\G$ be a Grothendieck category and let $\mathbf{t}=(\T,\F)$ be a torsion pair in $\G$. The following assertions are equivalent:
\begin{enumerate}
\item $\Ha_{\overline{\mathbf{t}}}$ is equivalent to $\G[1]$, via the realization functor;
\item Each object $X$ in $\G$ fits into an exact sequence
$$\xymatrix{0 \ar[r] & F^{0} \ar[r] & F^{1} \ar[r] & X \ar[r] & T^{0} \ar[r] & T^{1} \ar[r] & 0}$$
with $F^{i}\in \F$ and $T^{i} \in\mathcal{T}$, for $i=0,1$, and $\Ext^{3}_{\G}(T^{1},F^{0})=0$.
\end{enumerate}
\end{theorem}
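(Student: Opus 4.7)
My plan is to study when the realization functor $G:\D^b(\Ht)\to\D^b(\G)$ restricts to an equivalence $G|_{\Ha_{\overline{\mathbf{t}}}}:\Ha_{\overline{\mathbf{t}}}\stackrel{\sim}{\to}\G[1]$. The starting observation is that $G$ automatically sends $\Ha_{\overline{\mathbf{t}}}$ into $\G[1]$. Indeed, for $M\in\Ha_{\overline{\mathbf{t}}}$, Proposition \ref{prop.HRS tilt} applied to the second tilt $\overline{\mathbf{t}}=(\F[1],\T)$ in $\Ht$ provides a short exact sequence in $\Ha_{\overline{\mathbf{t}}}$ whose torsion and torsion-free parts are the shift of some $T\in\T$ and some $F[1]\in\F[1]$. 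Applying $G$ produces a triangle $T[1]\to G(M)\to F[1]\stackrel{+}{\to} T[2]$ in $\D^b(\G)$; its long exact cohomology sequence (with the connecting map landing in $H^0(T[1])=0$) shows that $G(M)$ has cohomology concentrated in degree $-1$, equal to an extension $0\to T\to X\to F\to 0$ in $\G$, so $G(M)\cong X[1]$.

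For $(1)\Rightarrow(2)$, I take $X\in\G$ and choose $M\in\Ha_{\overline{\mathbf{t}}}$ with $G(M)\cong X[1]$. The object $M$ is classified by an element $\eta\in\Ext^2_\Ht(F[1],T)$ (the class of the triangle above), and $G_*(\eta)\in\Ext^1_\G(F,T)$ is the class of $0\to T\to X\to F\to 0$. Representing $\eta$ as a Yoneda $2$-extension in $\Ht$ and then expanding each intermediate object of $\Ht$ as a $2$-term complex of the form $[F'\to T']$ in $\G$, one can splice the data together to read off the required $5$-term exact sequence $0\to F^0\to F^1\to X\to T^0\to T^1\to 0$ in $\G$. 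The condition $\Ext^3_\G(T^1,F^0)=0$ then emerges from the fact that this $5$-term sequence is itself a Yoneda $3$-extension of $T^1$ by $F^0$, and it must be null-cohomologous in $\G$ because it arises by realization from a $2$-extension that already lives in $\Ht$.

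For $(2)\Rightarrow(1)$, starting from the $5$-term exact sequence I factor it into an $\F$-presentation $0\to F^0\to F^1\to A\to 0$, the middle extension $0\to A\to X\to B\to 0$, and a $\T$-copresentation $0\to B\to T^0\to T^1\to 0$. The vanishing of $\Ext^3_\G(T^1,F^0)$ is exactly the compatibility condition that allows one to splice the outer two extensions to produce a class $\widetilde\eta\in\Ext^2_\Ht(F^1[1],T^0)$ with $G_*(\widetilde\eta)$ equal to the class of the middle extension. Building $M\in\Ha_{\overline{\mathbf{t}}}$ from $\widetilde\eta$ yields $G(M)\cong X[1]$, so $G|_{\Ha_{\overline{\mathbf{t}}}}$ is essentially surjective onto $\G[1]$. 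Full faithfulness will then follow from the BBD isomorphism $\Ext^1_\Ht\cong\Ext^1_\G$ on stalks in $\T\cup\F[1]$ together with the BBD monomorphism $\Ext^2_\Ht\hookrightarrow\Ext^2_\G$ applied to the appropriate Hom-computation via the torsion sequence of $M$.

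The main obstacle will be identifying $\Ext^3_\G(T^1,F^0)=0$ as the \emph{precise} obstruction controlling the lift of a Yoneda $1$-extension in $\G$ to a Yoneda $2$-extension in $\Ht$. Concretely, one must analyse the natural comparison map $G_*:\Ext^2_\Ht(F[1],T)\to\Ext^1_\G(F,T)$ and show, via a spectral sequence relating Yoneda Ext in $\Ht$ to Yoneda Ext in $\G$, that its cokernel is governed exactly by $\Ext^3_\G(T^1,F^0)$ for suitable approximations $T^1,F^0$ associated to $X$. Producing canonical choices of $F^0,F^1,T^0,T^1$ for a given $X$, and checking that the construction does not depend on these choices modulo equivalence of $3$-extensions, will be the most delicate part of the argument.
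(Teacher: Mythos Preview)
The paper does not prove this theorem; it is quoted from \cite{CHZ} and only used as an ingredient. So there is no proof in the paper to compare your proposal against, and what you have written is explicitly a plan rather than a proof --- you yourself flag the ``main obstacle'' and the ``most delicate part'' as unresolved.

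That said, your outline has a concrete flaw in the $(1)\Rightarrow(2)$ direction. You propose to extract the $5$-term sequence from a Yoneda $2$-extension in $\Ht$ and then argue that $\Ext^3_\G(T^1,F^0)=0$ because the resulting $3$-extension in $\G$ is null-cohomologous. But the conclusion required is that the \emph{entire group} $\Ext^3_\G(T^1,F^0)$ vanishes, not merely that one particular class in it is zero. Your argument, as stated, only yields the latter. To get the full vanishing you would need something stronger --- for instance, that \emph{every} element of $\Ext^3_\G(T^1,F^0)$ lifts to a higher Ext in $\Ht$ which is forced to vanish, or that $T^1$ and $F^0$ can be chosen from subclasses on which $\Ext^3_\G$ is automatically zero. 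Neither is supplied.

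More broadly, your plan hinges on analysing the comparison map $G_*:\Ext^2_{\Ht}(F[1],T)\to\Ext^1_\G(F,T)$ and identifying its cokernel with an $\Ext^3_\G$ term via a spectral sequence. This is a reasonable strategy, but none of that analysis is actually carried out: you have not written down the spectral sequence, identified its $E_2$ page, or tracked the relevant differential. Until that is done --- and until the full faithfulness argument (currently just a gesture at BBD) is made precise --- what you have is a heuristic, not a proof.
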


The key point in the proof of Lemma \ref{lem. Short F=LimF} to guarantee that the class $\F$ is closed under direct limits is  the fact that $\mathcal{H}_{\overline{\mathbf{t}}}$ is in that situation equivalent to $\G[1]$ via the realization functor.  Then, keeping the same proof of that lemma, one immediately deduces from Theorem \ref{thm.CHZ}:

\begin{corollary}
Let $\G$ be a Grothendieck category and let $\mathbf{t}=(\T,\F)$ be a torsion pair in $\G$ that satisfies condition 2 of last theorem (e.g. any (co)faithful torsion pair). If $\Ht$ is a Grothendieck category, then $\mathbf{t}$ is of finite type. 
\end{corollary}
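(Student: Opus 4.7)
The plan is to mimic the proof of Lemma~\ref{lem. Short F=LimF} essentially verbatim, replacing its single use of Proposition~\ref{prop. Equiva A[1]Ht} (which required enough injectives in $\G$) by an invocation of Theorem~\ref{thm.CHZ}. In other words, the entire argument really hinges on one thing: producing an equivalence $\Ha_{\overline{\mathbf{t}}}\simeq\G[1]$ via the realization functor. Condition 2 of Theorem~\ref{thm.CHZ} is precisely the hypothesis that gives this equivalence, and any (co)faithful torsion pair satisfies it (for a co-faithful $\mathbf{t}$ one has $\F^0=F^1=0$ in the six-term sequence, dually for faithful), so the statement genuinely generalizes Lemma~\ref{lem. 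Short F=LimF}.

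So first I would record, from Theorem~\ref{thm.CHZ}, the equivalence $R\colon \Ha_{\overline{\mathbf{t}}}\stackrel{\sim}{\longrightarrow}\G[1]$ induced by the realization functor. Then I would observe that $\bar{\mathbf{t}}=(\F[1],\T[0])$ is a torsion pair in the heart $\Ht$, which by hypothesis is a Grothendieck category. This places us in exactly the set-up of the PS1 proposition (Lemma 4.1 and Proposition 4.2 above) applied to the Grothendieck category $\Ht$ and the torsion pair $\bar{\mathbf{t}}$ in it. Assertion 4 of that proposition then yields that the doubly tilted pair
\[
\overline{\overline{\mathbf{t}}}=(\T[1],\F[1])
\]
is of finite type in $\Ha_{\overline{\mathbf{t}}}$.

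Next I would transport the conclusion across the equivalence $R$. Given a direct system $(F_\lambda)_\lambda$ in $\F$, the stalks $F_\lambda[1]$ form a direct system in $\F[1]\subseteq\Ha_{\overline{\mathbf{t}}}$; by the finite-type property just obtained, $\varinjlim^{\Ha_{\overline{\mathbf{t}}}} F_\lambda[1]\in\F[1]$. Applying $R$ (which preserves all colimits, being an equivalence) transfers this into $\G[1]$, so $\varinjlim^{\G[1]} F_\lambda[1]\in\F[1]$. Finally, the canonical shift equivalence $\G\simeq\G[1]$ restricts to $\F\simeq\F[1]$ and is compatible with direct limits, so this gives $\varinjlim F_\lambda\in\F$ in $\G$, proving that $\mathbf{t}$ is of finite type.

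The main obstacle, such as it is, is purely bookkeeping: one must be a bit careful that applying assertion 4 of the PS1 proposition to $(\Ht,\bar{\mathbf{t}})$ is legitimate, i.e.\ that the proposition is stated in sufficient generality to apply to any torsion pair in any Grothendieck category (not just the originally given $(\G,\mathbf{t})$). Once this is granted, the rest is transport of structure along an equivalence of abelian categories and the trivial shift isomorphism $\G\simeq\G[1]$. No new ideas are needed beyond the observation that Theorem~\ref{thm.CHZ} supplies the missing input.
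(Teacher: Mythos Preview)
Your proposal is correct and follows essentially the same approach as the paper: the paper explicitly remarks that one keeps the proof of Lemma~\ref{lem. Short F=LimF} verbatim, replacing the appeal to Proposition~\ref{prop. Equiva A[1]Ht} by Theorem~\ref{thm.CHZ} to obtain the equivalence $\Ha_{\overline{\mathbf{t}}}\cong\G[1]$. Your parenthetical justification that (co)faithful torsion pairs satisfy condition~2 (taking $F^0=F^1=0$ in the co-faithful case since injectives lie in $\T$ and $\T$ is closed under quotients, and dually $T^0=T^1=0$ in the faithful case since $\F$ is closed under subobjects) is also correct.
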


The general answer to Question \ref{ques.HRS-tstructure} was given by the authors. 

\begin{theorem}{[PS2, Theorem 1.2]}
Let $\G$ be a Grothendendieck and let $\mathbf{t}=(\T,\F)$ be a torsion pair in $\G$. Then, $\Ht$ is a Grothendieck category if, and only if, $\mathbf{t}$ is of finite type.
\end{theorem}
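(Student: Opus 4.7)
The plan is to treat the two implications separately, in the spirit of the strategy announced at the start of the paper. For the ``if'' direction I would reuse the classical techniques of \cite{PS1} collected in Subsection 4.1, while for the substantive ``only if'' direction, which was originally established by a delicate direct-limit analysis in \cite{PS2}, the plan is to invoke the Positselski--Stovicek purity-based characterization of AB5, as explicitly announced in the introduction.

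For the ``if'' implication, assume $\mathbf{t}$ is of finite type, so that $\F$ is closed under direct limits in $\G$. Since the HRS t-structure in $\D(\G)$ is smashing, $\Ht$ is AB3 and its coproducts are computed by left-truncating coproducts in $\D(\G)$ via $\tau_{\U_\mathbf{t}}^{\leq}$. A generator of $\Ht$ can be built from a generator of $\G$ by applying this same truncation. The real content is the AB5 condition. Using the properties of $H^0$ and $H^{-1}$ recalled in the proposition before Lemma \ref{lem. Short F=LimF} --- $H^0$ always preserves direct limits in $\Ht$, and the canonical map $\varinjlim H^{-1}(M_\lambda)\to H^{-1}(\varinjlim_{\Ht}M_\lambda)$ is an epimorphism --- I would verify exactness of $\varinjlim$ in $\Ht$ by a snake-type chase: the kernel of the latter epimorphism a priori sits in $\G$, and the torsion-free closure hypothesis forces it to vanish, after which a standard argument on directed systems of short exact sequences in $\Ht$ yields the AB5 condition.

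For the ``only if'' implication, assume $\Ht$ is a Grothendieck category and let $(F_\lambda)_{\lambda\in\Lambda}$ be a direct system in $\F$. By assertion (5) of the same proposition, the direct limit in $\Ht$ of the shifts $F_\lambda[1]$ is the stalk $(1\!:\!t)(\varinjlim F_\lambda)[1]$, so the task reduces to showing that $t(\varinjlim F_\lambda)=0$. This is where Positselski--Stovicek enters: their criterion provides a necessary purity-theoretic condition on an AB5 abelian category (with enough pure-injective cogenerators, naturally phrased in the contramodule setting) that, when specialized to the object $\varinjlim_{\Ht}F_\lambda[1]$ inside $\Ht$, should force the canonical comparison map $\varinjlim F_\lambda[1]\to(\varinjlim F_\lambda)[1]$ to be an isomorphism in $\D(\G)$. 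Unwinding this through Proposition \ref{prop.HRS tilt} and the identification of assertion (5) yields $t(\varinjlim F_\lambda)=0$, i.e.~$\varinjlim F_\lambda\in\F$, proving that $\mathbf{t}$ is of finite type.

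The main obstacle is the specialization step in the ``only if'' direction: one must translate the abstract purity test of Positselski--Stovicek, formulated naturally in terms of pure-injective cogenerators or contramodules over a topological endomorphism ring, into the concrete vanishing statement $t(\varinjlim F_\lambda)=0$ for an arbitrary direct system in $\F$. Identifying the right pure-injective test objects inside $\Ht$ and verifying that the purity hypothesis is satisfied by the directed systems of stalks $(F_\lambda[1])$ is the technical heart of the argument; once this translation is secured, the remaining bookkeeping is supplied by the catalogue of properties of $H^0$ and $H^{-1}$ assembled in Subsection 4.1.
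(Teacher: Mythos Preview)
Your ``if'' sketch is broadly aligned with the paper's treatment (AB3 and a generator come from \cite{PS1}, AB5 from the finite-type hypothesis via the behaviour of $H^0,H^{-1}$), so I will not dwell on it.

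The ``only if'' direction, however, has a genuine gap. You correctly sense that Positselski--Stovicek is the key external input, but your proposed use of it does not cohere into an argument. The Positselski--Stovicek criterion (Lemma~\ref{lem.Positselski-Stovicek}) says that an AB3* category with injective cogenerator $E$ is AB5 iff $E$ is pure-injective. Applied to $\Ht$ (which, being Grothendieck, has an injective cogenerator), this tells you that the injective cogenerator of $\Ht$ is pure-injective in $\Ht$. But there is no mechanism by which ``specializing'' this to the particular object $\varinjlim_{\Ht}F_\lambda[1]$ yields $t(\varinjlim F_\lambda)=0$; pure-injectivity of a cogenerator is a global structural fact, not a test you can apply to a single direct system to force a comparison map to be an isomorphism. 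Your final paragraph essentially concedes that this translation is the whole difficulty, without supplying it.

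The paper's route is structurally quite different and involves two ideas you are missing. First, from the injective cogenerator of $\Ht$ one extracts (via Proposition~\ref{prop.projective-generator-implies-qtilting}) a \emph{quasi}-cotilting object $Q$ with $\F=\Cogen(Q)$; since $\mathbf{t}$ need not be faithful, $Q$ is not cotilting in $\G$, so Corollary~\ref{cor.AB5-via-pureinjectivity} cannot be applied directly. The key move is to pass to the Grothendieck subcategory $\underline{\F}=\underline{\Cogen}(Q)\subseteq\G$, where the restricted pair $\mathbf{t}'=(\T\cap\underline{\F},\F)$ becomes genuinely cotilting with $Q$ as 1-cotilting object. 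One then shows, via a derived adjunction, that the heart $\mathcal{H}_{\mathbf{t}'}$ inherits AB5 from $\Ht$. Now Corollary~\ref{cor.AB5-via-pureinjectivity} (which is where Positselski--Stovicek actually enters) gives that $Q$ is pure-injective in $\underline{\F}$. Second, one needs a further external result, \cite[Theorem~3.9]{Cou-St}, which says that in a Grothendieck category the torsionfree class of a cotilting pair defined by a pure-injective 1-cotilting object is closed under direct limits. This last step is what finally yields finite type, and it is absent from your plan.
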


The main and harder part  was to prove that the finite type of $\mathbf{t}$ is a necessary condition. That is done in \cite[Theorem 4.8]{PS1} and the preliminary results leading to it. 
 We will give a new proof in Section \ref{Section new proof}. Conversely, for a torsion pair of finite type $\mathbf{t}=(\T,\F)$, the authors proved that $\T=\Pres(V)$, for some object $V$, and then, for a fixed generator $G$ of $\mathcal{G}$,  they showed in \cite[Proposition 4.7]{PS1} that the skeletally small subclass $\mathcal{N}$ of $\Ht$ consisting of those complexes $N$ such that $H^{-1}(N)$ is a subquotient of $G^m$  and $H^0(N)\cong V^n$, for some $m,n\in\mathbb{N}$,  is a class of generators of $\Ht$. Therefore, in order to answer Question \ref{ques.HRS-tstructure}, the only thing remaining was to prove that if $\mathbf{t}$ is of finite type then $\Ht$ is AB5. This was done in \cite[Theorem 1.2]{PS2}.

\subsection{A  side problem: When is the heart of a tilting torsion pair a Grothendieck category?}

Since tilting torsion pairs have hearts with a projective generator, it is good to know when that heart is a Grothendieck category, because such a heart would be very close to  a module category. The question of the title of this subsection has been recently  answered \cite[Corollary 2.5]{BHPST} for the case when the ambient Grothendieck category $\mathcal{G}$ is the module category over a ring. Recall that a \emph{pure exact sequence} in $\text{Mod}-R$ is a short exact sequence $0\rightarrow L\longrightarrow M\longrightarrow N\rightarrow 0$ that remains exact after applying the functor $?\otimes_RX$, for all left $R$-modules $X$. A module $P\in\text{Mod}-R$ is \emph{pure-projective} when the functor $\text{Hom}_R(P,?):\text{Mod-}R\longrightarrow\text{Ab}$ preserves exactness of pure exact sequences. 

\begin{theorem} \label{thm.6authors}{[BHPST, Corollary 2.5]}
Let $R$  be a ring, let $V$ be a 1-tilting (right) $R$-module and let $\mathbf{t}=(\Gen(V),V^\perp)$ be the associated torsion pair in $\text{Mod}-R$. The following assertions are equivalent:

\begin{enumerate}
\item $V$ is pure-projective.
\item $\mathbf{t}$ is of finite type (equivalently, the heart $\Ht$ is a Grothendieck category)
\end{enumerate}
\end{theorem}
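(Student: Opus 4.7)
The plan is to reduce to the genuinely new content by first disposing of the parenthetical equivalence. The equivalence ``$\mathbf{t}$ of finite type $\Leftrightarrow$ $\Ht$ is a Grothendieck category'' is Theorem \ref{thm.PS} specialized to $\G=\text{Mod-}R$, so nothing new is required there. Thus I would focus all effort on the equivalence $V$ pure-projective $\Leftrightarrow$ $\mathbf{t}$ of finite type, and I would invoke the standard characterization that a module $P$ is pure-projective if and only if it is a direct summand of a coproduct of finitely presented modules.

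For the direction ``pure-projective $\Rightarrow$ finite type'', I would write $V$ as a direct summand of $W=\bigoplus_{i\in I}V_i$ with each $V_i$ finitely presented. Since $V$ is $1$-tilting, $V^\perp=\text{Ker}(\text{Ext}_R^1(V,?))=\text{Ker}(\text{Ext}_R^1(W,?))=\bigcap_i\text{Ker}(\text{Ext}_R^1(V_i,?))$, so it suffices to show each $\text{Ext}_R^1(V_i,?)$ commutes with direct limits. The key technical step is to arrange, using that $V$ has projective dimension at most $1$ together with recent Mittag-Leffler-type decomposition results, that the $V_i$ can be chosen in the class $\text{FP}_2$ of modules with a finitely generated first syzygy (indeed, with finitely generated projective syzygy given the projective-dimension bound). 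For such $V_i$ the Ext functor is known to commute with direct limits, yielding finite type of $\mathbf{t}$.

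For the converse ``finite type $\Rightarrow$ pure-projective'', I would use the characterization (due to Bazzoni-Herbera-Trlifaj style arguments on deconstructible tilting classes) that a $1$-tilting class of finite type is of the form $\mathcal{S}^\perp$ for a set $\mathcal{S}$ of finitely presented modules of projective dimension at most $1$. The module $V':=R\oplus\bigoplus_{S\in\mathcal{S}}S$ is then pure-projective and $1$-tilting with $V'^\perp=V^\perp$, hence defines the same torsion pair $\mathbf{t}$. Standard tilting-equivalence arguments show that two $1$-tilting modules with the same tilting class are equivalent in the sense that each is a direct summand of an $\Add$-copower of the other (up to projective summands); since pure-projectivity is preserved by $\Add$ and by passage to direct summands, $V$ inherits pure-projectivity from $V'$.

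The main obstacle is the technical $\text{FP}_2$-replacement in the forward direction: an arbitrary finitely presented direct summand $V_i$ of a pure-projective tilting module need not itself be $\text{FP}_2$, and controlling the syzygies requires non-trivial input. This is exactly where the set-theoretic and homological machinery developed in \cite{BHPST} (tree modules, small object argument for deconstructible classes, Saroch's work on Ext and direct limits) enters, and it is the crucial point beyond a purely formal synthesis of earlier results.
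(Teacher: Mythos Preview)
The paper does not prove this theorem; it is quoted from \cite{BHPST} with no argument given, so there is no in-paper proof to compare against.

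Your sketch nonetheless contains a genuine error in the forward direction. You identify $V^\perp$ with $\Ker(\Ext^1_R(V,?))$, but in the paper's convention $V^\perp=\Ker(\Hom_R(V,?))$ is the torsion\emph{free} class $\F$, while $\Ker(\Ext^1_R(V,?))=\Gen(V)=\T$. What ``finite type'' requires is that $\F$ be closed under direct limits; closure of $\T$ under direct limits is automatic for any torsion class. Your chain of equalities also breaks at the second step: from $V\in\Add(W)$ with $W=\bigoplus_i V_i$ one only gets $\Ker\Ext^1_R(W,?)\subseteq\Ker\Ext^1_R(V,?)$, not equality, since nothing forces the $V_i$ to lie in $\Add(V)$. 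So even granting the $\text{FP}_2$ replacement, the argument does not address closure of the correct class.

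The converse sketch has a parallel gap. The module $V'=R\oplus\bigoplus_{S\in\mathcal{S}}S$ is not $1$-tilting: the summand $R$ forces $\Gen(V')=\text{Mod-}R$, whereas $\Ker\Ext^1_R(V',?)=\mathcal{S}^{\perp_1}=\Gen(V)$, and these coincide only when $V$ is already projective. More fundamentally, the Bazzoni--Herbera description $\Gen(V)=\mathcal{S}^{\perp_1}$ with $\mathcal{S}$ finitely presented of projective dimension $\leq 1$ holds for \emph{every} $1$-tilting module, regardless of whether the associated torsion pair is of finite type in the present sense; hence it cannot by itself detect pure-projectivity of $V$. You are right that the substantive content resides in \cite{BHPST}, but the outline you give misidentifies what that content establishes.
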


It is well-known that a module is pure-projective if, and only if, it is a direct summand of a coproduct (=direct sum) of finitely presented modules. So when the heart of a tilting torsion pair  $\mathbf{t}=(\Gen(V),V^\perp)$ is a Grothendieck category, the projective generator of the heart $V[0]$ is determined by a set of `small objects'. Therefore the following question, first risen in \cite[Question 5.5]{PS1}, is apropos. We will call two modules $M$ and $N$ $Add$-equivalent when $\text{Add}(M)=\text{Add}(N)$.

\begin{question} \label{Saorin-question}
Let $V$ be a 1-tilting $R$-module whose associated torsion pair is  of finite type (equivalently, such that the heart $\Ht$ is a Grothendieck category). Is $V$ $Add$-equivalent to a classical 1-tilting module?. Equivalently, is the heart $\Ht$ equivalent to the module category over a ring?
\end{question}

It turns out that the answer to this question is negative in general, with counterexamples already existing when $R$ is a noetherian ring (see \cite[Section 4]{BHPST}). However, the following is true:

\begin{theorem} \label{thm.positive-answers-to-SaorinQuestion}
Let the ring $R$ satisfy one of the following conditions:

\begin{enumerate}
\item $R$ is a commutative ring;
\item $R$ is a Krull-Schmidt ring, i.e. every finitely presented (right) $R$-module is a direct sum of modules with local endomorphism ring (e.g. $R$ is right Artinian); 
\item every pure-projective (right) $R$-module is a coproduct of finitely presented modules.
\end{enumerate}
Then, a 1-tilting $R$-module is pure-projective if, and only if, it is $\text{Add}$-equivalent to a classical 1-tilting R-modules. Said in equivalent words, the heart of a tilting torsion pair in $\text{Mod}-R$ is a Grothendieck category if, and only if, it is equivalent to a module category over a ring. 
\end{theorem}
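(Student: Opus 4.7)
The theorem asserts, in both its set-valued and object-valued readings, that pure-projectivity of a $1$-tilting module $V$ is equivalent to the existence of a classical $1$-tilting $V_0$ with $\Add(V)=\Add(V_0)$, under each of hypotheses (1)--(3) on $R$. I would split the proof into the two directions, with the content concentrated in the ``only if'' part.

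For the ``if'' direction, suppose $\Add(V)=\Add(V_0)$ for some classical $1$-tilting $V_0$. By Corollary \ref{cor.module-category}, the heart of the HRS t-structure associated to $\mathbf{t}=(\Gen(V_0),V_0^\perp)$ is equivalent to the module category over a ring, hence is a Grothendieck category, which by Theorem \ref{thm.6authors} forces the torsion pair to be of finite type and therefore $V_0$ to be pure-projective. Every object of $\Add(V_0)=\Add(V)$ is then pure-projective, and in particular so is $V$. Note that this direction uses no hypothesis on $R$.

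For the ``only if'' direction, assume $V$ is $1$-tilting and pure-projective. By Warfield's characterization, $V$ is a direct summand of some coproduct $\bigoplus_{i\in I}F_i$ with each $F_i$ finitely presented. I would first upgrade this to an intrinsic decomposition $V\cong\bigoplus_{j\in J}G_j$ with each $G_j$ finitely presented: case (3) gives this by hypothesis; case (2) follows from the Krull--Remak--Schmidt--Azumaya theorem, each indecomposable summand $G_j$ being a summand of some $F_i$ and hence itself finitely presented with local endomorphism ring; case (1) rests on results of Herbera--Prihoda which ensure that over a commutative ring, direct summands of coproducts of finitely presented modules are again such coproducts. Then set $V_0:=\bigoplus_{j\in J_0}G_j$, where $J_0$ indexes a set of isomorphism-class representatives of $\{G_j\}_{j\in J}$; by construction $\Add(V_0)=\Add(V)$, so $V_0$ is itself a $1$-tilting module.

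The crux of the proof is to arrange that $J_0$ is finite, for then $V_0$ is finitely presented, hence self-small, hence classical $1$-tilting. To this end I would exploit the cyclicity of $R$: since $R\in\T=\Pres(V_0)$, a presentation of $R$ uses only finitely many of the $G_j$, say a finite sub-sum $V_1=G_{j_1}\oplus\cdots\oplus G_{j_n}$ satisfying $R\in\Pres(V_1)$, whence $\Gen(V_1)=\T=\Gen(V_0)$. Combining the $1$-tilting vanishing $\Ext^{1}_R(V_0,\T)=0$ with the decomposition results from the previous step applied now to coproducts of copies of $V_1$, one deduces that each $G_j$ is a direct summand of $V_1^m$ for some $m$, collapsing the isomorphism classes in $\{G_j\}$ to those already occurring in $V_1$. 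The main obstacle is precisely this finiteness step: without one of the three hypotheses on $R$ one only obtains $\Add$-equivalence with a classical $1$-tilting \emph{set}, and the three hypotheses provide exactly the cancellation or decomposition properties needed to collapse that set down to a single object.
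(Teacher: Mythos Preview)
The paper does not prove this result; its entire proof reads ``See \cite[Corollary 2.8 and Theorem 3.7]{BHPST}.'' So you are attempting more than the paper does, and your outline should be compared against the cited source rather than the present manuscript.

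That said, your proposal has two genuine gaps. First, the step ``since $R\in\T=\Pres(V_0)$'' is false in general: for a $1$-tilting module $V$ one has $\T=\Gen(V)=\Ker\Ext^1_R(V,-)$, and $R$ need not lie in this class (e.g.\ $R=\mathbb{Z}$, $V=\mathbb{Q}\oplus\mathbb{Q}/\mathbb{Z}$, where $\T$ is the class of divisible groups). What is available instead is the tilting coresolution $0\to R\to V^{(0)}\to V^{(1)}\to 0$ with $V^{(i)}\in\Add(V)$, and any finiteness argument must start from there rather than from a $V$-presentation of $R$.

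Second, your treatment of case (1) rests on the assertion that over a commutative ring every pure-projective module is a coproduct of finitely presented modules. This is not known to hold for arbitrary commutative rings, and the argument in \cite{BHPST} for the commutative case (their Corollary 2.8) proceeds along a different line, exploiting the classification of $1$-tilting classes over commutative rings rather than a general decomposition theorem for pure-projectives. Your approach to cases (2) and (3) via Krull--Remak--Schmidt--Azumaya is in the right spirit and closer to what \cite[Theorem 3.7]{BHPST} does, but the reduction to finitely many isomorphism types still needs to be grounded in the tilting coresolution of $R$ rather than the incorrect claim $R\in\T$.
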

\begin{proof}
See \cite[Corollary 2.8 and Theorem 3.7]{BHPST}.
\end{proof}

\subsection{A new approach  using purity}\label{Section new proof}

Using now a recent result of Positselski and Stovicek \cite{Po-St} we can actually identify the cotilting torsion pairs in an abelian category for which the heart is  an AB5 abelian category. We need the following definition.

\begin{definition} \label{def.pure-injective} \normalfont
Let $\A$ be any additive category. We shall say that an object $Y$ of $\A$ is \emph{pure-injective} if the following two conditions hold:

\begin{enumerate}
\item The product of $Y^I$ exists in $\A$, for all sets $I$;
\item For each nonempty set $I$, there is a map $\phi :Y^I\longrightarrow Y$ such that $\phi\circ\iota_j=1_Y$, for all $j\in I$. Here $\iota_j:Y\longrightarrow Y^I$ is the unique morphism such that $\pi_i\circ\iota_j=\delta_{ij}1_Y$, with $\delta_{ij}$ the Kronecker symbol and $\pi_i:Y^I\longrightarrow Y$ the $i$-th projection. 
\end{enumerate}
We call the morphism $\iota_j$ in \emph{$j$-th injection into the product}.
\end{definition}

Note that,   if for $\A$ and $Y$ as in last definition,   also the coproduct $Y^{(I)}$ exists for all sets $I$,  then there is a canonical morphism $\kappa_Y :Y^{(I)}\longrightarrow Y^I$,  uniquely determined by the fact that the composition $Y\stackrel{\lambda_j}{\longrightarrow}Y^{(I)}\stackrel{\kappa_Y}{\longrightarrow}Y^I\stackrel{\pi_k}{\longrightarrow}Y$ equals $\delta_{jk}1_Y$, where $\lambda_j$ and $\pi_k$ are the $j$-th injection into the coproduct and $\pi_k$ is the $k$-th projection from the product, for all $j,k\in I$. We also have a \emph{summation map} $s_Y:Y^{(I)}\longrightarrow Y$, which is the only morphism such that $s_Y\circ\lambda_j=1_Y$, for all $j\in I$. We leave as an easy exercise for the reader to check that in this situation $Y$ is pure-injective if, and only if, this summation map $s_Y$ factors through $\kappa_Y$, for all sets $I$.  This completes the proof of the following result, which  is crucial for us:

\begin{lemma} (\cite[Theorem 3.3 (dual)]{Po-St})\label{lem.Positselski-Stovicek}
Let $\A$ be an AB3* abelian category with an injective cogenerator $E$ (whence $\A$ is also AB3 by the first paragraph of the proof of Corollary \ref{cor.Po-St-tilting-cotilting-corresp}). The following assertions are equivalent:

\begin{enumerate}
\item Direct limits are exact in $\A$, i.e. $\A$ is AB5.
\item The summation map $s_E:E^{(I)}\longrightarrow E$ factors through $\kappa_E:E^{(I)}\longrightarrow E^I$, for all sets $I$.
\item $E$ is a pure-injective object of $\A$.
\end{enumerate}
\end{lemma}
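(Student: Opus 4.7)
Since $(2)\Leftrightarrow(3)$ has already been established in the paragraph immediately preceding the statement by unpacking the pure-injectivity condition in terms of the summation-factorization, the plan focuses on $(1)\Leftrightarrow(2)$, treating the two implications separately.

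For $(1)\Longrightarrow(2)$, the strategy is to exploit the presentation $E^{(I)}=\varinjlim_{F}E^F$, with $F$ ranging over the finite subsets of $I$, which is valid in any AB3 category. On each $E^F$ the summation map $s_F:E^F\to E$ is tautological (finite coproducts and products coincide), and these assemble compatibly to $s_E$; moreover, each canonical map $E^F\hookrightarrow E^I$ is a split monomorphism. In an AB5 category, the colimit of this directed system of split monomorphisms remains a monomorphism, so $\kappa_E:E^{(I)}\to E^I$ is itself monic. Injectivity of $E$ then yields the required extension $\phi:E^I\to E$ of $s_E$ along $\kappa_E$, since $\Ext_{\A}^1(E^I/E^{(I)},E)=0$.

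The substantive direction $(2)\Longrightarrow(1)$ is the content of \cite[Theorem 3.3]{Po-St}. The plan is to verify AB5 by testing the mono-preservation of $\varinjlim$ through the faithfully exact contravariant functor $\A(-,E)$. For a monomorphism of directed systems $(A_i)\hookrightarrow (B_i)$, showing $\varinjlim A_i\hookrightarrow\varinjlim B_i$ reduces, via this functor and the standard identification $\A(\varinjlim X_i,E)\cong\varprojlim\A(X_i,E)$, to the surjectivity of $\varprojlim\A(B_i,E)\to\varprojlim\A(A_i,E)$. Componentwise surjectivity follows from the injectivity of $E$, but the obstruction to a compatible choice of lifts is of $\varprojlim^1$-type, and condition (2) is precisely what is needed to kill it. Concretely, given arbitrary componentwise lifts $h_i:B_i\to E$ of a compatible family $(g_i:A_i\to E)$, the defect cocycle $(h_j\circ f_{ij}-h_i)_{i\le j}$ assembles into a morphism out of a coproduct $\coprod_{i\le j}A_i$; by embedding this coproduct suitably into a coproduct of copies of $E$ via the cogenerating property, one invokes the summation-factorization to realize the defect as a coboundary, thereby converting $(h_i)$ into a compatible lift.

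The main obstacle is the technical bridge in $(2)\Longrightarrow(1)$: condition (2) is a statement about coproducts of copies of the single object $E$, whereas the obstruction to compatibility naturally lives over coproducts of arbitrary objects $A_i$. Executing this translation rigorously, so that the summation factorization genuinely absorbs the cocycle defect, is the combinatorial heart of the Positselski-Stovicek argument, and rests on a delicate interplay between the universal properties of products and coproducts, the injectivity and cogenerating role of $E$, and a careful choice of presentation of $\varinjlim$ as a cokernel.
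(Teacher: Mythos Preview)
Your identification of $(2)\Leftrightarrow(3)$ as already handled in the preceding paragraph matches the paper exactly. Beyond that, however, the paper does not attempt any proof of $(1)\Leftrightarrow(2)$: the lemma is stated with a citation to \cite[Theorem 3.3 (dual)]{Po-St}, and the sentence ``This completes the proof of the following result'' refers only to the exercise establishing $(2)\Leftrightarrow(3)$. So your proposal goes well beyond what the paper itself does.

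Your sketch of $(1)\Longrightarrow(2)$ is correct and is the standard argument: writing $E^{(I)}=\varinjlim_F E^F$ over finite $F\subseteq I$, the maps $E^F\hookrightarrow E^I$ form a directed system of (split) monomorphisms into the constant system $E^I$, and AB5 forces the induced map $\kappa_E$ to be a monomorphism; injectivity of $E$ then extends $s_E$ along $\kappa_E$. This is a clean self-contained proof of that direction.

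For $(2)\Longrightarrow(1)$ your outline is reasonable in spirit but, as you yourself note, the passage from the summation-factorization hypothesis (which concerns coproducts of copies of $E$) to the vanishing of the $\varprojlim{}^1$-type obstruction over arbitrary directed systems is the genuine technical content of \cite{Po-St}, and your paragraph is a plan rather than a proof. The actual argument in \cite{Po-St} proceeds somewhat differently (via an explicit construction showing that the relevant inverse systems of abelian groups are Mittag-Leffler), so if you intend to write this out in full you should consult that source directly. Since the paper under review simply cites \cite{Po-St} here, your proposal is already more detailed than what is required.
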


\begin{corollary} \label{cor.AB5-via-pureinjectivity}
Let $\A$ be an abelian category and $\mathbf{t}=(\mathcal{T},\F)$ be a torsion pair in $\A$. The following assertions are equivalent:

\begin{enumerate}
\item There is a pure-injective 1-cotilting object $Q$ of $\A$ such that $\F =\Cogen(Q)$;
\item $\mathbf{t}$ is a faithful torsion pair in $\A$ whose heart is an AB5 abelian category with an injective cogenerator;
\item The heart $\Ht$ is an AB5 abelian category with an injective cogenerator and $\bar{\mathbf{t}}$ is a co-faithful torsion pair in $\Ht$. 
\end{enumerate}
In particular, the HRS process gives a bijection `inverse of itself'  \newline $(\mathbf{AB},\mathbf{tor}_{cotilt-pinj})\stackrel{\cong}{\longrightarrow}(\mathbf{AB5}_{inj},\mathbf{tor}_{cofaithful})$, where:

 \begin{enumerate} 
\item[(a)] $(\mathbf{AB},\mathbf{tor}_{cotilt-pinj})$ consists of the pairs $(\A,\mathbf{t})$, where $\A$ is an abelian category and $\mathbf{t}=(\T,\F)$ is a torsion pair, with $\F=\Cogen(Q)$ for $Q$ a 1-coltilting pure-injective object.
\item[(b)] $(\mathbf{AB5}_{inj},\mathbf{tor}_{cofaithful})$ consists of the pairs $(\mathcal{B},\bar{\mathbf{t}})$, where $\mathcal{B}$ is an AB5 abelian category with an injective cogenerator and $\bar{\mathbf{t}}=(\mathcal{X},\mathcal{Y})$ is a co-faithful torsion pair in $\mathcal{B}$. 
\end{enumerate}
\end{corollary}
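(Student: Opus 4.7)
The plan is to combine the bijection of Corollary \ref{cor.bijections-from-HRStilt}(2), between cotilting torsion pairs and pairs whose heart is AB3* with an injective cogenerator, with the Positselski-Stovicek characterisation of AB5 via pure-injectivity of the injective cogenerator (Lemma \ref{lem.Positselski-Stovicek}). The equivalence (2)$\Leftrightarrow$(3) is immediate from Proposition \ref{prop.HRS1}, since the faithfulness of $\mathbf{t}$ is precisely the co-faithfulness of $\bar{\mathbf{t}}$.

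For (1)$\Rightarrow$(2), Theorem \ref{thm.cotilting theorem} applied to $Q$ yields at once that $\mathbf{t}$ is faithful and that $\Ht$ is AB3* with injective cogenerator $Q[1]$. The technical crux is then to transfer pure-injectivity between $Q$ in $\A$ and $Q[1]$ in $\Ht$, for then Lemma \ref{lem.Positselski-Stovicek} upgrades AB3* to AB5. I would first compute that the product of $I$ copies of $Q[1]$ in $\Ht$ is $Q^{I}[1]$, where $Q^{I}$ is the product in $\A$ (which exists since $Q$ is 1-cotilting). Applying $\D(?,Q^{I}[1])$ to the canonical triangle $H^{-1}(M)[1]\to M\to H^{0}(M)[0]\stackrel{+}{\to}$ of Proposition \ref{prop.HRS tilt} for $M\in\Ht$, and using that $\Ext^{2}_{\A}(-,Q^{I})=0$ (by Corollary \ref{cor.tilting-projdim1}, the vanishing being inherited by $Q^{I}$ since $\Ext^{i}_{\A}(-,?)$ commutes with products in the second variable), one extracts the canonical isomorphism $\Ht(M,Q^{I}[1])\cong\prod_{i\in I}\Ht(M,Q[1])$. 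Combined with the shift identification $\Ht(Q^{I}[1],Q[1])\cong\A(Q^{I},Q)$ which sends the $j$-th product injection in $\Ht$ to the $j$-th product injection in $\A$, a splitting $\phi\colon Q^{I}[1]\to Q[1]$ of $\iota_{j}^{\Ht}$ corresponds bijectively to a splitting $\psi\colon Q^{I}\to Q$ of $\iota_{j}^{\A}$, giving the equivalence of pure-injectivities.

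For (2)$\Rightarrow$(1), once $\Ht$ is known to be AB3* with injective cogenerator, Theorem \ref{thm.cotilting theorem} furnishes a 1-cotilting $Q$ with $\F=\Cogen(Q)$ such that $Q[1]$ is an injective cogenerator of $\Ht$; Lemma \ref{lem.Positselski-Stovicek} forces $Q[1]$ to be pure-injective in $\Ht$, and the transfer argument above carries this to $Q$ in $\A$. The main obstacle I foresee is that (2) only postulates AB5 and an injective cogenerator for $\Ht$, whereas Theorem \ref{thm.cotilting theorem} also needs AB3*. I would address this either by reading AB3* implicitly into the hypothesis, or by proving that AB5 plus an injective cogenerator plus the co-faithfulness of $\bar{\mathbf{t}}$ (so that $\T[0]$ is cogenerating in $\Ht$) forces the existence of products in $\Ht$, via a Freyd special adjoint theorem argument analogous to the one used in Corollary \ref{cor.Po-St-tilting-cotilting-corresp}.

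Finally, the claimed inverse-to-itself bijection between $(\mathbf{AB},\mathbf{tor}_{cotilt-pinj})$ and $(\mathbf{AB5}_{inj},\mathbf{tor}_{cofaithful})$ is obtained by restricting the bijection of Corollary \ref{cor.bijections-from-HRStilt}(2): the equivalences (1)$\Leftrightarrow$(2)$\Leftrightarrow$(3) identify precisely the subclasses on each side cut out by the pure-injectivity condition on the 1-cotilting object and by the AB5 condition on the heart, and the inverse-to-itself property is inherited from the unrestricted bijection.
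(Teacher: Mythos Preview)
Your proposal is correct and follows essentially the same approach as the paper: Proposition~\ref{prop.HRS1} for (2)$\Leftrightarrow$(3), and Theorem~\ref{thm.cotilting theorem} combined with Lemma~\ref{lem.Positselski-Stovicek} for (1)$\Leftrightarrow$(2), the crux being the transfer of pure-injectivity between $Q$ in $\A$ and $Q[1]$ in $\Ht$ via the identification of $Q^I[1]$ as the product of $I$ copies of $Q[1]$ in $\Ht$. The paper obtains this identification by citing the dual of Lemma~\ref{lem.auxiliar}, while you sketch the computation directly from the canonical triangle and the vanishing of $\Ext^2_\A(?,Q^I)$; both routes are fine.

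You are in fact more explicit than the paper in flagging the AB3* hypothesis needed in (2)$\Rightarrow$(1) to invoke Lemma~\ref{lem.Positselski-Stovicek}; the paper simply applies the lemma without comment. Two small corrections to your write-up: co-faithfulness of $\bar{\mathbf{t}}=(\F[1],\T[0])$ means that the torsion class $\F[1]$, not $\T[0]$, is cogenerating in $\Ht$; and your alternative Freyd-style argument for deducing AB3* does not go through as stated, since the dual special adjoint functor theorem (as used in Corollary~\ref{cor.Po-St-tilting-cotilting-corresp}) requires a \emph{generating} set, whereas here you only have a cogenerator. Your first option, reading AB3* implicitly into the hypothesis of (2), is effectively what the paper does.
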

\begin{proof}
$(2)\Longleftrightarrow (3)$ It is a consequence of Proposition \ref{prop.HRS1}. 

$(1)\Longrightarrow (2)$ By dualizing the proof of Theorem \ref{thm.tilting theorem} and Lemma \ref{lem.auxiliar}, we know that $Q[1]$ is an injective cogenerator of $\Ht$ and that the stalk complex $Q^I[1]$ is the product of $I$ copies of $Q[1]$ in $\Ht$, for all sets $I$. If $\phi :Q^I\longrightarrow Q$ is a map such that $\phi\circ\iota_j=1_Y$, for all $j\in I$, with the notation of Definition \ref{def.pure-injective}, we then get that $\phi [1]:Q^I[1]\longrightarrow Q[1]$ satisfies that $\phi[1]\circ\iota_j[1]=1_{Y[1]}$, for all $j\in I$. But $\iota_j[1]:Y[1]\longrightarrow Y^I[1]$ is cleary in $j$-th injection into the product in $\Ht$. Therefore $Q[1]$ is pure-injective in $\Ht$. Since, by Theorem \ref{thm.cotilting theorem}, we know that $\Ht$ is AB3*, we can apply Lemma \ref{lem.Positselski-Stovicek}  to conclude that $\Ht$ is AB5.

$(2)\Longrightarrow (1)$ By Lemma \ref{lem.Positselski-Stovicek}  again, we know that $\Ht$ admits an injective cogenerator $E$ which is pure-injective. But since $\F[1]$ is a cogenerating class in $\Ht$ we necessarily have that $E=Q[1]$, for some $Q\in\F$. Now the dual of the proof of $(2)\Longleftrightarrow(3)\Longrightarrow (1)$ in Theorem \ref{thm.tilting theorem} shows that $Q$ is is a 1-cotilting object of $\A$ such that $\F=\Cogen(Q)$ and $\F$ is a generating class in $\A$. The argument in the proof of $(1)\Longrightarrow (2)$ proves that $Q$ is pure-injective in $\A$ if and only if $Q[1]=E$ is pure-injective in $\Ht$, something that we know by hypothesis. 
\end{proof}	

 Recall that if $\A$ is an abelian category, then an \emph{abelian exact subcategory} is a full subcategory $\mathcal{B}$ that is abelian and such that the inclusion functor $\mathcal{B}\hookrightarrow\A$ is exact. This is equivalent to say that $\mathcal{B}$ is closed under taking finite coproducts, kernels and cokernels in $\A$.  We are now in a position to re-prove the hard part of \cite[Theorem 1.2]{PS2}, that is the proof of \cite[Theorem 4.8]{PS1}, by using recent results in the literature.

\begin{theorem} \label{thm.Parra-Saorin-Theorem}
Let $\mathcal{G}$ be a Grothendieck category and let $\mathbf{t}=(\T,\F)$ be a torsion pair in $\mathcal{G}$. If the heart $\Ht$ of the associated Happel-Reiten-Smalo t-structure in $\mathcal{D}(\mathcal{G})$ is a Grothendieck category, then $\mathbf{t}$ is of finite type, i.e. $\F$ is closed under taking direct limits in $\mathcal{G}$. 
\end{theorem}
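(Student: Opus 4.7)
The plan is to exploit the pure-injectivity characterization of AB5 in Lemma \ref{lem.Positselski-Stovicek} in order to transfer the Grothendieck structure of $\Ht$ down to a pure-injective object of $\G$ that controls $\F$. Since $\Ht$ is Grothendieck, it is in particular AB3* with an injective cogenerator $E$; Lemma \ref{lem.Positselski-Stovicek}, applied with $\A=\Ht$, then forces $E$ to be pure-injective in $\Ht$, i.e.\ the summation map $s_{E}:E^{(I)}\to E$ in $\Ht$ factors through the canonical map $\kappa_{E}:E^{(I)}\to E^{I}$ (taken in $\Ht$), for every set $I$.

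I would then set $Q:=H^{-1}(E)\in\F$ and show that $Q$ inherits pure-injectivity in $\G$. The restriction $H^{-1}|_{\Ht}:\Ht\to\G$ preserves coproducts (by the proposition from \cite{PS1} recalled earlier in Section 4.2) and preserves products, by Lemma \ref{lem-preservation of (co)products by class-homology}(2) since it is right adjoint to $A\rightsquigarrow (1:t)(A)[1]$. Applying $H^{-1}$ to the factorization of $s_{E}$ through $\kappa_{E}$ therefore yields a factorization of $s_{Q}$ through $\kappa_{Q}$ in $\G$, and $Q$ is pure-injective in $\G$. One also checks that $\F=\Cogen(Q)$: any $F\in\F$ yields $F[1]\in\Ht$, which embeds into a power $E^{I}$ since $E$ cogenerates $\Ht$, and the left-exact functor $H^{-1}$ then gives $F\hookrightarrow Q^{I}$ in $\G$; the reverse inclusion is immediate from $Q\in\F$ and closure of $\F$ under subobjects and products.

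A key intermediate vanishing is $\Ext^{1}_{\G}(F,Q)=0$ for every $F\in\F$, which I would derive from the short exact sequence $0\to Q[1]\to E\to H^{0}(E)[0]\to 0$ in $\Ht$ (the torsion decomposition of $E$ with respect to $\bar{\mathbf{t}}$), combined with the injectivity of $E$ in $\Ht$ and the trivial vanishing $\Hom_{\D(\G)}(F[1],H^{0}(E)[0])=\Hom_{\D(\G)}(F,H^{0}(E)[-1])=0$, yielding $\Ext^{1}_{\G}(F,Q)\cong\Ext^{1}_{\Ht}(F[1],Q[1])=0$. With $Q$ pure-injective in $\G$ and this $\Ext$-vanishing on $\F$ at hand, the classical Bazzoni-type argument gives $\Ext^{1}_{\G}(F,Q)=0$ for $F:=\varinjlim F_{\lambda}$ with $(F_{\lambda})$ any directed system in $\F$: the canonical presentation $0\to K\to\coprod F_{\lambda}\to F\to 0$ is pure-exact in $\G$, and surjectivity of $\Hom_{\G}(\coprod F_{\lambda},Q)\twoheadrightarrow \Hom_{\G}(K,Q)$ (coming from the pure-injectivity of $Q$) forces $\Ext^{1}_{\G}(F,Q)=0$.

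The main obstacle is the final step: promoting $\Ext^{1}_{\G}(F,Q)=0$ to $F\in\F$. From $\F=\Cogen(Q)$ one reads off $\T=\{X\in\G:\Hom_{\G}(X,Q)=0\}$, so the kernel of the canonical evaluation $F\to Q^{\Hom(F,Q)}$ coincides with the torsion part $t(F)\in\T$, and the task reduces to showing $t(F)=0$. For this one compares the colimit $F=\varinjlim^{\G}F_{\lambda}$ in $\G$ with $\varinjlim^{\Ht}F_{\lambda}[1]\cong (1:t)(F)[1]$ in the Grothendieck category $\Ht$, exploiting the identification $\coprod^{\Ht}F_{\lambda}[1]=(\coprod F_{\lambda})[1]$ (the HRS t-structure is smashing and $\F$ is closed under coproducts) together with the pure-injective structure of $E$ pushed forward through $H^{-1}$, to conclude that the canonical epi $F\twoheadrightarrow (1:t)(F)$ is also a monomorphism, whence $t(F)=0$ and $F\in\F$. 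This final comparison of direct limits across $\G$ and $\Ht$ mediated by the pure-injectivity is the technical heart of the new proof.
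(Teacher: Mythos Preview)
Your overall strategy---extract an injective cogenerator $E$ of $\Ht$, use Lemma~\ref{lem.Positselski-Stovicek} to get pure-injectivity, push it down to $Q=H^{-1}(E)\in\G$, and then argue that $\Cogen(Q)=\F$ is closed under direct limits---is exactly the spirit of the paper's new proof, and your Steps~1--4 are correct. The genuine gaps are in Steps~5 and~6.

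\textbf{Step 5.} Your ``Bazzoni-type argument'' needs that $\Hom_\G(\coprod F_\lambda,Q)\to\Hom_\G(K,Q)$ is surjective for the colimit presentation $0\to K\to\coprod F_\lambda\to\varinjlim F_\lambda\to 0$. This requires $Q$ to be injective with respect to monomorphisms that are filtered colimits of split monos. What you have established is only the ``summation-map'' condition of Definition~\ref{def.pure-injective}. In a locally finitely presented Grothendieck category the two notions agree, but $\G$ is not assumed to be such, and the passage from one to the other is precisely the non-trivial input (essentially \cite[Theorem~3.9]{Cou-St}) that you are tacitly using without proof.

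\textbf{Step 6.} What you wrote is a description of ingredients, not an argument; nothing there forces $t(F)=0$. There is, however, a clean replacement once Step~5 is granted: by Proposition~\ref{prop.projective-generator-implies-qtilting} the object $Q$ is quasi-cotilting, so $\F=\underline{\Cogen}(Q)\cap\Ker(\Ext^1_\G(?,Q))$. Since $\F=\Cogen(Q)$ is closed under coproducts in $\G$ (each $F_\lambda\hookrightarrow Q^{I_\lambda}$, hence $\coprod F_\lambda\hookrightarrow\coprod Q^{I_\lambda}\hookrightarrow Q^{\sqcup I_\lambda}$), the colimit $F$ is a quotient of $\coprod F_\lambda\in\Cogen(Q)$, so $F\in\underline{\Cogen}(Q)$; combined with $\Ext^1_\G(F,Q)=0$ this gives $F\in\F$. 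You should use this instead of the vague colimit comparison.

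\textbf{Comparison with the paper.} The paper avoids your Step~5 difficulty entirely by a change of ambient category: it passes to the Grothendieck subcategory $\underline{\F}=\underline{\Cogen}(Q)$, in which $Q$ becomes a genuine $1$-cotilting (not merely quasi-cotilting) object. It then shows that the heart $\mathcal{H}_{\mathbf{t}'}$ of the restricted torsion pair $\mathbf{t}'=(\T\cap\underline{\F},\F)$ is AB5 by constructing an exact left adjoint $\iota:\mathcal{H}_{\mathbf{t}'}\to\Ht$ via derived functors, so that the AB5 condition transfers from $\Ht$. Now Theorem~\ref{thm.cotilting theorem} and Corollary~\ref{cor.AB5-via-pureinjectivity} give that $Q$ is pure-injective in $\underline{\F}$, and because $Q$ is $1$-cotilting there, the external result \cite[Theorem~3.9]{Cou-St} applies verbatim to conclude that $\F$ is closed under direct limits in $\underline{\F}$, hence in $\G$. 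In short: your route pushes pure-injectivity to $\G$ directly via $H^{-1}$ and then must reprove the Bazzoni-type closure result in the quasi-cotilting setting, whereas the paper's detour through $\underline{\F}$ upgrades ``quasi-cotilting'' to ``$1$-cotilting'' so that a known theorem can be quoted.
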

\begin{proof}
By Proposition \ref{prop.projective-generator-implies-qtilting} we know that $\F=\Cogen(Q)=\Copres(Q)$, for some quasi-cotilting object $Q$. Consider now the subcategory $\underline{\F}=\underline{\Cogen}(Q)$ of $\G$ (see Definition \ref{def.sub(co)generated}). This subcategory is clearly closed under taking subobjects, quotients and coproducts, so that it is an abelian exact subcategory where colimits are calculated as in $\mathcal{G}$. In particular $\underline{\F}$ is an AB5 abelian category. Moreover, if $X$ is a generator of $\mathcal{G}$ one readily gets that $(1:t)(X)$ is a generator of $\underline{\F}$, so that this subcategory is actually a Grothendieck category. 

Note also that the inclusion functor $\iota :\underline{\F}\hookrightarrow\mathcal{G}$ has a right adjoint $\rho:\mathcal{G}\longrightarrow\underline{\F}$. The action on objects is given by $\rho (M)=\text{tr}_{\F}(M)$, where $\text{tr}_\F(M)$ is the trace of $\F$ in $M$, i.e. the subobject sum of all subobjects of $M$ which are in $\underline{\F}$.  We leave to the reader the easy verification that $(\iota ,\rho)$ is an adjoint pair. We can then derive these functors. Due to the exactness of $\iota$, the left derived of $\iota$, $\mathbb{L}\iota =\iota$ `is' $\iota$ itself, i.e. it just takes a complex $X^\bullet\in\mathcal{D}(\underline{\F})$ to the same complex viewed as an object of $\mathcal{D}(\mathcal{G})$. The right derived $\mathbb{R}\rho :\mathcal{D}(\mathcal{G})\longrightarrow\mathcal{D}(\underline{\F})$ is defined in the usual way, namely, it is the composition $\mathcal{D}(\mathcal{G})\stackrel{\mathbf{i}}{\longrightarrow}\mathcal{K}(\mathcal{G})\stackrel{\rho}{\longrightarrow}\mathcal{K}(\underline{\mathcal{F}})\stackrel{q}{\longrightarrow}\mathcal{D}(\underline{F})$, where $\mathbf{i}$ is the homotopically injective resolution functor, (abusing of notation) $\rho$ is the obvious functor induced at the level of homotopy categories, and $q$ is the canonical localization functor. Then, by classical properties of derived functors and derived categories, we get that $(\iota,\mathbb{R}\rho)$ is an adjoint pair of triangulated functors. 

Consider now the restricted torsion pair $\mathbf{t}'=(\T\cap\underline{\F},\F)$ in $\underline{\F}$. By \cite[Proposition 3.2]{PS1}, we know that its heart $\mathcal{H}_{\mathbf{t}'}$ is an AB3 abelian category. Moreover, the triangulated functor $\iota :\mathcal{D}(\underline{\mathcal{F}})\longrightarrow\mathcal{D}(\mathcal{G})$ clearly satisfies that $\iota (\mathcal{H}_{\mathbf{t}'})\subseteq\Ht$. We therefore get and induced functor $\iota :\mathcal{H}_{\mathbf{t}'}\longrightarrow\Ht$, which is necessarily exact since short exact sequences in hearts are the triangles in the ambient triangulated category with their three vertices in that heart. We claim that the composition of functors $\rho': \Ht\hookrightarrow\mathcal{D}(\mathcal{G})\stackrel{\mathbf{R}\rho}{\longrightarrow}\mathcal{D}(\underline{\F})\stackrel{H_{\mathbf{t}'}^0}{\longrightarrow}\mathcal{H}_{\mathbf{t}'}$ is right adjoint of $\iota :\mathcal{H}_{\mathbf{t}'}\longrightarrow\Ht$. Let $X\in\mathcal{H}_{\mathbf{t}'}$ and $M\in\Ht$ be arbitrary objects. Note that we can identify $M$ with a complex $ \cdots\rightarrow 0\rightarrow E^{-1}\rightarrow E^0\rightarrow E^1\rightarrow \cdots$ of injective objects of $\mathcal{G}$ concentrated in degrees $\geq -1$. Then $\mathbb{R}\rho (M)$ is the complex $\cdots \rightarrow  0\rightarrow \rho (E^{-1})\rightarrow\rho (E^0)\rightarrow\rho (E^1)\rightarrow \cdots$. As a right adjoint, the functor $\rho:\mathcal{G}\longrightarrow\underline{\F}$ is left exact, and this implies that $H^{-1}(\mathbb{R}\rho (M))\cong \rho (H^{-1}(M))\cong H^{-1}(M)$ since $H^{-1}(M)\in\mathcal{F}$. This implies that $\mathbb{R}\rho (M)\in\mathcal{W}_{\mathbf{t}'}$, where $\mathcal{W}_{\mathbf{t}'}$ is the coaisle of the HRS t-structure in $\mathcal{D}(\underline{\F})$ associated to $\mathbf{t}'$. Remember that the restriction of $H_{\mathbf{t}'}^0$ to $\mathcal{W}_{\mathbf{t}'}$ is right adjoint of the inclusion functor $\mathcal{H}_{\mathbf{t}'}\hookrightarrow\mathcal{W}_{\mathbf{t}'}$ (see \cite[Lemma 3.1(2)]{PS1}). We then have a sequence of isomorphisms, natural on both variables:  

$$\begin{array}{llllll}
\mathcal{H}_{\mathbf{t}'}(X,\rho'(M))&=& \mathcal{H}_{\mathbf{t}'}(X,(H_{\mathbf{t}'}^0\circ\mathbb{R}\rho)(M))  & \cong& \mathcal{W}_{\mathbf{t}'}(X,\mathbb{R}\rho (M)) \\
& & & = &\mathcal{D}(\underline{\F})(X,\mathbb{R}\rho (M)) \\
&  &  & \cong& \mathcal{D}(\mathcal{G})(\iota (X),M) \\ 
&&& \cong &\Ht (\iota (X),M),
\end{array}$$
which implies that $(\iota,\rho')$ is an adjoint pair. 

We then get that the exact functor $\iota:\mathcal{H}_{\mathbf{t}'}\longrightarrow\Ht$ preserves direct limits. Moreover, it reflects zero objects since $\iota (X)=0$ means that $X$ is  acyclic, viewed as a complex of objects of $\mathcal{G}$, which is the same as being acyclic when viewed as a complex of objects in $\underline{\F}$. Consider now a direct system $(0\rightarrow L_i\stackrel{u_i}{\rightarrowtail}M_i)_{i\in I}$ of monomorphisms in $\mathcal{H}_{\mathbf{t}'}$ and, putting $u:=\varinjlim (u_i)$, consider the exact sequence in $\mathcal{H}_{\mathbf{t}'}$ \begin{center}$0\rightarrow\text{Ker}_{\mathcal{H}_{\mathbf{t}'}}(u)\longrightarrow\varinjlim_{\mathcal{H}_{\mathbf{t}'}}L_i\stackrel{u}{\longrightarrow}\varinjlim_{\mathcal{H}_{\mathbf{t}'}}M_i.  $ \end{center} By exactness and preservation of direct limits by $\iota:\mathcal{H}_{\mathbf{t}'}\longrightarrow\Ht$, we get an exact sequence in $\Ht$ \begin{center}$0\rightarrow\iota (\text{Ker}(u))\longrightarrow\varinjlim_{\Ht}\iota (L_i)\stackrel{\varinjlim\iota (u_i)}{\longrightarrow}\varinjlim_{\Ht}\iota (M_i). $ \end{center} Since the $\iota (u_i)$ are monomorphism and $\Ht$ is AB5 we get that $\varinjlim\iota (u_i)$ is a monomorphism, so that $\iota (\text{Ker}(u))=0$. The fact that $\iota$ reflects zero objects then implies that $u$ is a monomorphism. Therefore $\mathcal{H}_{\mathbf{t}'}$ is also AB5.  

On the other hand, we claim that $Q$ is a 1-cotilting object of $\underline{\F}$ and that $\mathbf{t}'$ is its associated torsion pair in $\underline{\F}$.  Indeed, since we know that $\mathcal{F}=\Cogen(Q)=\underline{\mathcal{F}}\cap\text{Ker}(\text{Ext}_\G^1(?,Q))$, it is enough to check that $\underline{\mathcal{F}}\cap\text{Ker}(\text{Ext}_\G^1(?,Q))=\Ker(\Ext_{\underline{\mathcal{F}}}^1(?,Q))$. The inclusion ``$\subseteq$'' is clear. For the converse, let $M\in\Ker(\Ext_{\underline{\mathcal{F}}}^1(?,Q))$ and fix two exact sequences 
 \[
 0\rightarrow F'\stackrel{u}{\longrightarrow}F\longrightarrow M\rightarrow 0 \qquad\text{and}\qquad 0\rightarrow F'\stackrel{v}{\longrightarrow}Q^I\longrightarrow F''\rightarrow 0
 \] 
 with $F,\, F',\, F''\in\mathcal{F}$ and $I$ some set (where for the second exact sequence we used that $\mathcal{F}=\Copres(Q)$). Taking the pushout of $u$ and $v$, we obtain the following commutative diagram with exact rows and columns:
 \[
 \xymatrix@R=14pt{
 &0\ar[d]&0\ar[d]\\
 0\ar[r]& F'\ar@{}[dr]|{\text{P.O.}}\ar[r]^u\ar[d]_{v}&F\ar[r]\ar[d]&M\ar[r]\ar@{=}[d]&0\\
 0\ar[r]&Q^I\ar[d]\ar[r]&X\ar[d]\ar[r]&M\ar[r]&0\\
 &F''\ar[d]\ar@{=}[r]&F''\ar[d]\\
 &0&0
 }
 \] 
We then obtain that $X\in\mathcal{F}$ (as it is an extension of $F$ and $F''\in\F$), so that $Q^I,\, X,$ and $M$ all belong in $\underline{\mathcal{F}}$. By the choice of $M$, the second row of the diagram splits, so that $M\in\mathcal{F}$ since it is isomorphic to a direct summand of $X$.

It now follows from Theorem \ref{thm.cotilting theorem}, Corollary \ref{cor.AB5-via-pureinjectivity} and  the proof of the latter that $Q$ is pure-injective in $\underline{\F}$. But then $\mathcal{F}$ is closed under taking direct limits in the Grothendieck category $\underline{\F}$ (see \cite[Theorem 3.9]{Cou-St}), which is equivalent to say that is is closed under taking direct limits in $\mathcal{G}$. That is, $\mathbf{t}=(\T,\F)$ is a torsion pair of finite type, as desired.

\end{proof}
  
\section{Beyond the HRS case:  Some recent results} \label{sec.beyondHRS}

After Question \ref{ques.HRS-tstructure} was solved, 
as said in the introduction, it is Question \ref{ques.smashing-t-structure} the one that has deserved more attention. So far, the work was mainly concentrated on the case when the t-structure $(\U,\W$) is compactly generated. Then one can even assume that the ambient triangulated category $\D$ is compactly generated. This is due to the fact that $\mathcal{L}:=\text{Loc} _\mathcal{D}(\mathcal{U})$, the smallest triangulated subcategory of $\D$ containing $\U$ and closed under taking arbitrary coproducts, is compactly generated and the restricted t-structure $\tau'=(\mathcal{U},\mathcal{U}\cap\mathcal{L})$ has the same heart as $\tau$. 

In the compactly generated case, partial answers to the question were obtained by using different techniques, such as functor categories (\cite{AMV}, \cite{Bo}), stable $\infty$-categories \cite{Lurie}  and the theory of derivators \cite{SSV}, see also \cite{PS3} and \cite{Bazz} for particular cases. These investigations suggest that for all compactly generated t-structures appearing in nature the heart is a Grothendieck category. The concluding result in this vein has been recently obtained independently in \cite{Bo2} and \cite{SS}:


\begin{theorem}[\cite{Bo2}  and \cite{SS}]
Let $\mathcal{D}$ be a triangulated category with coproducts and $\tau =(\mathcal{U},\mathcal{W})$ be a compactly generated t-structure in $\D$. Then the heart $\mathcal{H}_\tau=\mathcal{U}\cap\mathcal{W}$ is a Grothendieck category. 
\end{theorem}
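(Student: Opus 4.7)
The plan is to prove the theorem in three stages, after the reduction already noted in the paragraph preceding the statement to the case where $\D$ itself is compactly generated. In that setting, $\U=\text{aisle}\langle\mathcal{S}\rangle$ for a set $\mathcal{S}$ of compact objects and $\W=\mathcal{S}^{\perp_{<0}}$; because every $S[n]$ is again compact, $\W$ is closed under coproducts in $\D$, i.e., $\tau$ is smashing. I write $H^0_\tau:\D\longrightarrow\mathcal{H}_\tau$ for the associated cohomological functor.

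The first stage is to exhibit $\mathcal{H}_\tau$ as an AB3 abelian category with a set of generators. The smashing property forces both $\U$ and $\W$, and hence also $\mathcal{H}_\tau=\U\cap\W$, to be closed under coproducts in $\D$, so that coproducts in $\mathcal{H}_\tau$ coincide with those in $\D$; in particular $\mathcal{H}_\tau$ is AB3 and $H^0_\tau$ preserves coproducts. A set of generators is then $\mathcal{G}:=\{H^0_\tau(S)\text{: }S\in\mathcal{S}\}$: for $M\in\mathcal{H}_\tau$, complete the canonical evaluation map $\coprod_{S\in\mathcal{S},\,g\in\D(S,M)}S\to M$ to a triangle $\coprod S\to M\to C\stackrel{+}{\to}$ in $\D$, and apply $H^0_\tau$. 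Since each $S\in\mathcal{S}\subseteq\U$ has $H^k_\tau(S)=0$ for $k>0$ and $H^0_\tau$ preserves coproducts, one gets $H^1_\tau(\coprod S)=0$, so the associated long exact sequence shows that the induced morphism $\coprod_{\mathcal{H}_\tau}H^0_\tau(S)\twoheadrightarrow M$ is an epimorphism.

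The second and decisive stage is the verification of AB5, which is the main obstacle. The strategy I would pursue is to use the restricted Yoneda-type functor $y:\D\longrightarrow\text{Mod-}\mathcal{S}:=[\mathcal{S}^{\text{op}},\text{Ab}]$, $X\rightsquigarrow\D(?,X)_{|\mathcal{S}}$, which is cohomological, and to exploit two features: first, compactness of each $S\in\mathcal{S}$ makes $y$ send filtered homotopy colimits in $\D$ to filtered colimits in the Grothendieck category $\text{Mod-}\mathcal{S}$; second, the generator $\mathcal{G}$ constructed above makes $y_{|\mathcal{H}_\tau}$ conservative (it detects zero objects). Given a filtered system of short exact sequences $0\to L_i\to M_i\to N_i\to 0$ in $\mathcal{H}_\tau$, one compares the colimit in $\mathcal{H}_\tau$ with the termwise homotopy colimit in $\D$, applies $y$, and transfers the exactness of filtered colimits in $\text{Mod-}\mathcal{S}$ to $\mathcal{H}_\tau$ via the conservativity of $y_{|\mathcal{H}_\tau}$.

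The main technical obstacle is controlling the truncations needed to pass between the homotopy colimit in $\D$ and the actual filtered colimit in $\mathcal{H}_\tau$, since the truncation functors defining $H^0_\tau$ do not a priori commute with filtered homotopy colimits. An alternative, cleaner but more technically demanding route, closer to \cite{SS}, is to enhance $\D$ to a strong stable derivator, show that the heart of a compactly generated t-structure inherits a cocomplete abelian derivator structure, and then conclude AB5 from the general principle that the heart in a cocomplete derivator is AB5; the obstacle there shifts to the verification of the derivator axioms for $\mathcal{H}_\tau$. In either approach, the combinatorial core of the argument is the interplay between the compactness of $\mathcal{S}$ and the exactness of filtered colimits in the functor category $\text{Mod-}\mathcal{S}$.
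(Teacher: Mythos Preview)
The paper does not supply a proof of this theorem: it appears in the survey Section~\ref{sec.beyondHRS} as a statement attributed to \cite{Bo2} and \cite{SS}, with no argument given. There is therefore no ``paper's own proof'' to compare your proposal against.

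That said, your proposal is not a proof but a plan with an explicitly acknowledged gap. The first stage (AB3 and a set of generators) is essentially correct and standard; the adjunction $\D(S,M')\cong\mathcal{H}_\tau(H^0_\tau(S),M')$ for $S\in\U$ and $M'\in\mathcal{H}_\tau$ indeed shows that $\{H^0_\tau(S):S\in\mathcal{S}\}$ generates. The second stage, however, is where all the content lies, and you do not carry it out: you correctly identify that the restricted Yoneda functor $y$ is cohomological, coproduct-preserving, and conservative on $\mathcal{H}_\tau$, but you then need to compare $\varinjlim_{\mathcal{H}_\tau}$ with a homotopy colimit in $\D$, and---as you say yourself---the truncation functors have no a priori reason to commute with filtered homotopy colimits in a bare triangulated category. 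This is not a minor technicality; it is precisely the obstruction that forces the cited references to bring in additional structure (functor-category techniques in \cite{Bo2}, derivator or enhancement arguments in \cite{SS} and \cite{SSV}). Your alternative route via derivators is the approach of \cite{SSV} and \cite{SS}, but you only name it rather than execute it. In short: the outline is sensible and identifies the right difficulty, but the decisive step is missing, and filling it requires substantial machinery that you have not supplied.
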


In the development via derivators of \cite{SSV}, the new concept of \emph{homotopically smashing t-structure} (with respect to a strong stable derivator) was introduced. We refer to that reference for the definition and to \cite{Groth} for all the terminology concerning derivators.  All compactly generated t-structures that appear as the base of a strong stable derivator are homotopically smashing. The latter t-structures are always smashing, but the converse is not true.  For instance the HRS t-structure is always smashing, but it is homotopically smashing exactly when the torsion pair is of finite type (see \cite[Proposition 6.1]{SSV}). The following is a combination of \cite[Theorems B and C]{SSV}, and we refer to that reference for all unexplained terminology appearing in the statement:

 \begin{theorem} \label{thm.SSV}
 Let $\mathbb{D}:\text{Cat}^{op}\longrightarrow\text{CAT}$ be a strong stable derivator, with base $\D:=\mathbb{D}(\mathbf{1})$, and let $\tau =(\mathcal{U},\mathcal{W})$ be a t-structure in $\D$ that is homotopically smashing with respect to $\mathbb{D}$, then the heart $\mathcal{H}_\tau$ is an AB5 abelian category. When, in addition, $\mathbb{D}$ is the derivator associated to the homotopy category of a stable combinatorial model structure and $\tau$ is generated by a set, that heart is a Grothendieck category.
 \end{theorem}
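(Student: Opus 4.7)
The plan is to split the proof into the two sentences of the statement and reduce both to properties of homotopy colimits that are intrinsic to the derivator $\mathbb{D}$.

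For the AB5 part, I would first identify how filtered colimits in $\mathcal{H}_\tau$ are computed. Given a filtered diagram $X\colon I \to \mathcal{H}_\tau \hookrightarrow \mathcal{W}$, the strong derivator hypothesis allows one to lift $X$ to a coherent diagram in $\mathbb{D}(I)$; its homotopy colimit lies in $\mathcal{W}$ by the homotopically smashing condition, and applying the truncation $\tau_{\mathcal{U}}^{\leq 0}$ (equivalently, the cohomological functor $H_\tau^0\colon \mathcal{W} \to \mathcal{H}_\tau$, which is left adjoint to the inclusion) produces an object of $\mathcal{H}_\tau$. An adjunction argument then identifies this object as the colimit of $X$ in $\mathcal{H}_\tau$, so that direct limits exist there and are given by an explicit recipe. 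To upgrade to AB5, I would take a filtered diagram of short exact sequences $0 \to L_i \to M_i \to N_i \to 0$ in $\mathcal{H}_\tau$, corresponding to a diagram of triangles in $\mathcal{D}$ which, by the strong derivator axioms, lifts to a coherent diagram of triangles in $\mathbb{D}(I)$. The homotopy colimit is a triangle whose three vertices all lie in $\mathcal{W}$ by the homotopically smashing hypothesis; the long exact sequence of $H_\tau^0$ applied to it, combined with the identification of colimits above, yields $0 \to \varinjlim L_i \to \varinjlim M_i \to \varinjlim N_i \to 0$ in $\mathcal{H}_\tau$, which is exactly AB5.

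For the refinement to a Grothendieck category, the combinatorial model-structure hypothesis provides the set-theoretic control while the assumption that $\tau$ is generated by a set $\mathcal{S}$ provides candidate generators. I would take $\{H_\tau^0(S[n])\colon S\in\mathcal{S},\, n\in\mathbb{Z}\}$ (a set, not a proper class) as generators of $\mathcal{H}_\tau$: if $M \in \mathcal{H}_\tau$ is nonzero, the generating property of $\mathcal{S}$ at the triangulated level forces $\mathcal{D}(S[n], M) \neq 0$ for some $S$ and $n$, and by the adjunction between the aisle truncation and the heart one transfers this to a nonzero morphism out of some $H_\tau^0(S[n])$. Combined with AB5 from the previous paragraph, this yields the Grothendieck conclusion.

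The hardest step is the construction and control of coherent lifts of incoherent diagrams (both of objects and of triangles) from $\mathcal{H}_\tau$ into $\mathbb{D}(I)$, on which the whole interplay between the triangulated structure and filtered colimits hinges. Without the strong derivator axioms that interaction is simply not available, which is precisely why the theorem is stated in the derivator setting rather than for bare triangulated categories.
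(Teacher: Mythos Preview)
The paper does not prove this theorem: it is quoted in Section~\ref{sec.beyondHRS} as a combination of Theorems~B and~C of \cite{SSV}, with an explicit referral to that reference for the argument and even for the terminology. So there is no in-paper proof to compare your proposal against.

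That said, your outline for the AB5 part is essentially the strategy of \cite{SSV}: lift filtered diagrams in $\mathcal{H}_\tau$ to coherent diagrams via the strong derivator axioms, use the homotopically smashing hypothesis to keep the homotopy colimit in the co-aisle, and then read off exactness of direct limits from the long exact sequence of $H_\tau^0$. You correctly flag the coherent-lifting step as the crux.

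Your treatment of the Grothendieck refinement, however, has a genuine gap. You invoke the combinatorial model-structure hypothesis only as unspecified ``set-theoretic control'' and then argue for generators of $\mathcal{H}_\tau$ using only the set $\mathcal{S}$ generating the t-structure. But knowing $\D(S[n],M)\neq 0$ for some $S,n$ gives a nonzero morphism $H_\tau^0(S[n])\to M$, which is far weaker than what is needed: a set of generators in the abelian sense requires that every object be an epimorphic image of a coproduct of objects from that set (equivalently, that the set detects proper subobjects). Your argument does not supply this, and in fact the combinatorial hypothesis is not decorative here: in \cite{SSV} it is used substantively, via accessibility and local-presentability arguments available for combinatorial model categories, to produce the generator. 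Without that input the existence of a generator is open, which is precisely why the second sentence of the theorem carries the extra hypothesis.
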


Soon after \cite{SSV} appeared,  Rosanna Laking \cite{L} proved the following result:

\begin{theorem}
Let $\mathcal{D}$ be a compactly generated triangulated category that is the base of a strong stable derivator $\mathbb{D}$, and let $\tau =(\mathcal{U},\mathcal{W})$ be a left nondegenerate t-structure in $\D$. The following assertions are equivalent:

\begin{enumerate}
\item $\tau$ is homotopically smashing with respect to $\mathbb{D}$.
\item $\tau$ is smashing and the heart $\mathcal{H}_\tau$ of $\tau$ is a Grothendieck category.
\end{enumerate}
\end{theorem}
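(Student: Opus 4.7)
For $(1)\Rightarrow(2)$: That homotopically smashing implies smashing is part of the standard setup of \cite{SSV}, since ordinary coproducts can be recovered from filtered (homotopy) colimits of their finite partial coproducts. By the first part of Theorem \ref{thm.SSV}, $\mathcal{H}_\tau$ is then AB5, so it remains only to produce a set of generators. To this end, fix a set $\mathcal{S}$ of compact generators of $\D$ and consider the family $G:=\{H^0_\tau(S[n])\text{: }S\in\mathcal{S},\, n\in\mathbb{Z}\}\subseteq\mathcal{H}_\tau$. Using that every object of $\D$ admits a cellular approximation by coproducts of shifts of elements of $\mathcal{S}$, and that $H^0_\tau$ is cohomological and preserves coproducts (smashing), one checks that $G$ detects nonzero objects in $\mathcal{H}_\tau$ and hence, thanks to AB3, constitutes a set of generators.

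For $(2)\Rightarrow(1)$: Fix a small directed (equivalently, filtered) category $I$ and a coherent diagram $X\in\mathbb{D}(I)$ with $X_i\in\mathcal{W}$ for every $i\in I$, and set $X_\infty:=\mathrm{colim}_I X\in\D$. By left nondegeneracy, $X_\infty\in\mathcal{W}$ is equivalent to $H^k_\tau(X_\infty)=0$ for every $k<0$, where $H^k_\tau:=H^0_\tau\circ(?[k])$. The strategy is to show that for each integer $k$ the canonical morphism
\[
\varinjlim_{I} H^k_\tau(X_i)\longrightarrow H^k_\tau(X_\infty)
\]
(with direct limit taken in $\mathcal{H}_\tau$) is an isomorphism. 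Once this is established, the hypothesis $X_i\in\mathcal{W}$ forces $H^k_\tau(X_i)=0$ for $k<0$, so each left-hand side vanishes and the conclusion follows.

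The displayed isomorphism is the crux. Because $\mathcal{H}_\tau$ is a Grothendieck category, its prederivator $\mathbb{H}_\tau$ is a right derivator in which directed colimits coincide with the classical $\varinjlim$ and are exact; and the smashing assumption on $\tau$ makes $H^0_\tau$ coproduct-preserving and cohomological, which together let one lift it to a morphism of prederivators $\mathbb{D}\longrightarrow\mathbb{H}_\tau$. Compatibility of this lift with filtered colimits on the base categories then follows from the exactness of $\varinjlim$ in $\mathcal{H}_\tau$ (AB5), combined with the long exact cohomology sequences extracted from the truncation triangles. The main obstacle, which is the heart of Laking's contribution, is precisely this upgrade from a pointwise cohomological functor to a morphism of derivators commuting with filtered homotopy colimits: for discrete shapes it is essentially the smashing condition, but for filtered shapes it genuinely requires the Grothendieck structure of $\mathcal{H}_\tau$ to bridge the homotopy colimits in $\mathbb{D}$ and the abelian direct limits in $\mathcal{H}_\tau$.
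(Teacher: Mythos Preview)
The paper does not prove this theorem; it is merely stated and attributed to Laking \cite{L}. So there is no ``paper's own proof'' to compare against, and your proposal must stand on its own.

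For $(1)\Rightarrow(2)$ your sketch is essentially correct, though the generator argument deserves one more line: for $M\in\mathcal{H}_\tau$ and $X\in\mathcal{U}$ one has a natural isomorphism $\D(X,M)\cong\mathcal{H}_\tau(H^0_\tau(X),M)$ (the restriction $H^0_\tau|_{\mathcal{U}}$ is left adjoint to the inclusion $\mathcal{H}_\tau\hookrightarrow\mathcal{U}$), so if $\D(S[n],M)\neq 0$ then $\mathcal{H}_\tau(H^0_\tau(\tau_\U^{\leq}(S[n])),M)\neq 0$, and $H^0_\tau(\tau_\U^{\leq}(S[n]))=H^0_\tau(S[n])$.

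For $(2)\Rightarrow(1)$ there is a genuine gap. You correctly isolate the key claim---that the canonical map $\varinjlim_I H^k_\tau(X_i)\to H^k_\tau(\mathrm{hocolim}_I X)$ is an isomorphism---but you do not prove it. Saying that $H^0_\tau$ ``lifts to a morphism of prederivators'' compatible with filtered colimits, and that this ``follows from the exactness of $\varinjlim$ in $\mathcal{H}_\tau$ (AB5) combined with long exact cohomology sequences'', is precisely the assertion to be demonstrated, not an argument for it. Homotopy colimits in a derivator are not computed by any pointwise formula to which one could simply apply AB5; the passage from abelian exactness of $\varinjlim$ in $\mathcal{H}_\tau$ to commutation of $H^0_\tau$ with \emph{homotopy} colimits in $\mathbb{D}$ is the whole difficulty. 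You yourself concede this is ``the heart of Laking's contribution'' without supplying it.

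Laking's actual route is quite different from the one you gesture at: she works through the theory of purity in compactly generated triangulated categories, showing (under the derivator hypothesis) that a left nondegenerate t-structure is homotopically smashing if and only if its coaisle is \emph{definable}, and linking definability to the Grothendieck property of the heart via pure-injectivity of a partial cosilting object. This is a substantively different toolkit from the derivator-morphism lift you propose, and it is what makes the implication go through. As written, your $(2)\Rightarrow(1)$ is a plausible strategy statement rather than a proof.
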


The last two results suggest the following open question:

\begin{question} 
Let $\D$ be a well-generated triangulated category (e.g. a compactly generated one) that is the base of a strong stable derivator $\mathbb{D}$. Are the following two conditions equivalent for a t-structure $\tau =(\mathcal{U},\mathcal{W})$ in $\D$?

\begin{enumerate}
\item $\tau$ is homotopically smashing with respect to $\mathbb{D}$.
\item $\tau$ is smashing and the heart $\mathcal{H}_\tau$ of $\tau$ is a Grothendieck category.
\end{enumerate}
\end{question}

In order to get (a partial version of) this question in a derivator-free way, a hint comes from \cite[Theorem 4.6]{L} (see also \cite[Theorem 4.7]{LV}), where the author proves that 'homotopically smashing' and 'definable' are synonymous terms for the co-aisle of left nondegenerated t-structures, when the ambient triangulated category is the compactly generated base of a strong stable derivator (see \cite{L} for the definition of definable subcategory of a compactly generated triangulated category). This suggests the following question:

\begin{question}
Let $\D$ be a compactly generated triangulated category.  Are the following two conditions equivalent for a t-structure $\tau =(\mathcal{U},\mathcal{W})$ in $\D$?:
\begin{enumerate}
\item $\mathcal{W}$ is definable.
\item $\tau$ is smashing and its heart $\mathcal{H}_\tau$  is a Grothendieck category.
\end{enumerate}
\end{question}

\medskip
Manuel Saor\'in -- \texttt{msaorinc@um.es}\\
{Departamento de Matem\'{a}ticas,
Universidad de Murcia,  Aptdo.\,4021,
30100 Espinardo, Murcia,
SPAIN}

\medskip
Carlos E. Parra -- \texttt{carlos.parra@uach.cl}\\
{Instituto de Ciencias F\'isicas y Matem\'aticas, Edificio Emilio Pugin, Campus Isla Teja, Universidad Austral de Chile, 5090000 Valdivia, CHILE}

\end{document}